                                                       \title{Weak Essentially Undecidable Theories of Concatenation II }
                                                        \author{Juvenal Murwanashyaka}
                                                        \affil{Department of Mathematics, University of Oslo, Norway}
                                                      \newtheorem{theorem}{Theorem} 
                                                      \newtheorem{lemma} [theorem] {Lemma}
                                                      \newtheorem{proposition} [theorem] {Proposition} 
                                                       \newtheorem{open problem} [theorem] {Open Problem}
\begin{document}

\maketitle

\begin{abstract}

We show that we can interpret concatenation theories in arithmetical theories without coding sequences.

\end{abstract}

\section{Introduction}

A computably enumerable  first-order theory is called \emph{essentially undecidable} if any consistent extension, in the same language, is undecidable (there is no algorithm for deciding whether an arbitrary sentence is a theorem). 
A computably enumerable  first-order theory is called  \emph{essentially incomplete} if any recursively axiomatizable consistent extension is incomplete. 
Since a  decidable consistent  theory can be extended to  a decidable complete  consistent theory  (see Chapter 1 of Tarski et al. \cite{tarski1953}), 
a theory is essentially undecidable if and only if it is essentially incomplete.
Two theories that are  known to be essentially undecidable are Robinson arithmetic $ \mathsf{Q} $ and the related theory $ \mathsf{R} $
(see Figure \ref{AxiomsOfRandQ}  for the axioms of $ \mathsf{R} $ and $ \mathsf{Q} $). 
The essential undecidability of $  \mathsf{R} $ and $ \mathsf{Q} $ is proved in   Chapter 2 of   \cite{tarski1953}.
In Chapter 1 of \cite{tarski1953}, Tarski introduces  interpretability as an indirect way of  showing that first-order theories are essentially undecidable. 
The method is indirect because it reduces the problem of essential undecidability of a theory $T$ to the problem of essentially undecidability of a theory $S$ which is known to be essentially undecidable. 
Interpretability between theories  is a reflexive and transitive relation and thus   induces a degree structure on the class of computably enumerable essentially undecidable  first-order theories.

\begin{figure}

\[
\begin{array}{r l  c  c r l  }
&{\large \textsf{The Axioms of } \mathsf{R} }
&
&
&
& {\large \textsf{The Axioms of } \mathsf{Q} }
\\
\\
\mathsf{ R_{1} } 
& \overline{n} + \overline{m} = \overline{n+m} 
&  
&  
& \mathsf{Q_1}  
& \forall x   y \;  [ \  x \neq y \rightarrow \mathrm{S}  x \neq \mathrm{S}  y    \ ]   
\\
\mathsf{ R_{2} } 
& \overline{n} \times \overline{m} = \overline{n \times m}  
& 
& 
& \mathsf{Q_2} 
& \forall x \;   [ \   \mathrm{S}  x \neq 0 \  ] 
\\
\mathsf{ R_{3} } 
&  \overline{n} \neq \overline{m}    \hspace{2cm}    \mbox{ if }  n \neq m
& 
& 
& \mathsf{Q_3}  
& \forall x  \;   [ \  x=0 \vee \exists y \;  [ \ x = \mathrm{S}  y \  ]     \ ]   
\\
\mathsf{ R_{4} } 
&  \forall x \;   [ \  x \leq \overline{n} \rightarrow \bigvee_{k \leq n } x = \overline{k}   \ ]  
&
& 
& \mathsf{Q_4}  
&  \forall x  \;   [ \  x+0 = x  \ ]   
\\
\mathsf{ R_{5} } 
&  \forall x \; [ \   x \leq \overline{n}  \vee \overline{n} \leq x   \   ] 
& 
& 
& \mathsf{Q_5}  
& \forall x  y \;   [ \  x+ \mathrm{S}  y = \mathrm{S}  ( x+y)   \ ] 
\\
&
&
&
& \mathsf{Q_6} 
&  \forall x  \;   [ \  x \times 0 = 0  \ ]
\\
&
&
&
& \mathsf{Q_7} 
&  \forall x  y  \;  [ \  x \times  \mathrm{S}  y =   x \times y   + x   \ ]  
\end{array}
\]

\caption{
Non-logical axioms of the first-order theories  $ \mathsf{R} $,  $ \mathsf{Q} $.
The axioms of $ \mathsf{R} $ are given by axiom schemes where 
$ n , m  , k $ are natural numbers 
and $ \overline{ n } $,  $  \overline{ m }  $,   $  \overline{ k }  $ are their canonical names.
}
\label{AxiomsOfRandQ}
\end{figure}

In   \cite{MurwanashyakaAML}, 
we introduce two theories of concatenation $ \mathsf{WD} $,  $ \mathsf{D} $ 
and show that they  are respectively  mutually interpretable  with  $ \mathsf{R} $  and  $ \mathsf{Q} $ 
(see Figure  \ref{AxiomsOfWDandD}  for the axioms of  $ \mathsf{WD} $ and $ \mathsf{D} $).
The language of    $ \mathsf{WD} $ and  $ \mathsf{D} $   is $ \lbrace 0, 1, \circ , \preceq \rbrace $ 
where $ 0$ and $1$ are constant symbols, $ \circ $ is a binary function symbol and $ \preceq $ is a binary relation symbol.
The intended model of  $ \mathsf{WD} $ and  $ \mathsf{D} $ is the free semigroup generated by two letters extended with the prefix relation. 
Extending finitely generated free semigroups with the prefix relation allows us to introduce $ \Sigma_1$-formulas which are expressive enough  to encode computations by  Turing machines 
(see Kristiansen \&  Murwanashyaka  \cite{KristiansenMurwanashyakaAML}).
$ \Sigma_1$-formulas are formulas on negation normal form where universal quantifiers occur bounded, 
i.e., they are of the form $ \forall x  \preceq t $. 
Axioms $  \mathsf{D}_4  \!  -  \!  \mathsf{D}_7 $  are essential for coding sequences in  $ \mathsf{D} $  since they allow us to work with 
$ \Sigma_0$-formulas,  formulas where all quantifiers are of the form $ \exists \preceq     t $,   $ \forall x  \preceq   t $.
In   \cite{MurwanashyakaAML}, 
we show that $ \mathsf{Q} $ is interpretable in  $ \mathsf{D} $ by  using especially   axioms   $  \mathsf{D}_4  \!  -  \!  \mathsf{D}_7 $   to restrict the universe of  $  \mathsf{D} $ to   a domain  $K$ on which the analogue of $ \mathsf{Q}_3 $ holds, 
that is, 
the  sentence  
$ \mathsf{Q}_3^{ \prime }   \equiv   \    \forall x   \;  [  \  x = 0  \;  \vee  \;  x = 1   \;   \vee  \; 
  \exists y \preceq x    \;   [    \   x = y0   \;  \vee  \;  x = y 1    \    ]     \    ]   $.
 To improve readability,  we use  juxtaposition instead of the binary function symbol  $ \circ $  of the formal language.
  Due to the existential quantifier in   $  \mathsf{Q}_3^{ \prime }   $, 
  we need to ensure that $ \Sigma_0$-formulas are    absolute for  $K$.

\begin{figure}

\[
\begin{array}{r l  c   r l  } 
&{\large \textsf{The Axioms of } \mathsf{WD} }
\\
\\
\mathsf{ WD_{1} } 
&  \overline{\alpha} \ \overline{ \beta} = \overline{ \alpha \beta} 
\\
\mathsf{ WD_{2} } 
&  \overline{\alpha} \neq \overline{\beta}    \hspace*{1cm}    \mbox{  if   }  \alpha \neq \beta
\\
\mathsf{ WD_{3} } 
&    \forall x  \;  [  \  x \preceq   \overline{\alpha}    \leftrightarrow  
 \bigvee_{   \gamma  \in \mathsf{Pref}  ( \alpha )   }  x=  \overline{ \gamma}   \   ] 
\\
\\
& {\large \textsf{The Axioms of } \mathsf{D} }
\\
\\
 \mathsf{D_1}  
& \forall x  y  z   \;   [ \  (x y) z = x (y z)    \ ]  
\\
 \mathsf{D_2}
&   \forall x y \; [ \  x \neq y \to   ( \ x   0 \neq y   0  \wedge  x   1 \neq y   1 \   )   \ ]  
\\
 \mathsf{D_3}  
& \forall x y  \;   [ \ x0 \neq y1    \  ]   
\\
 \mathsf{D_4}  
&  \forall x  \;    [ \ x  \preceq 0 \leftrightarrow x=0     \  ] 
\\
 \mathsf{D_5}  
& \forall x  \;    [ \ x  \preceq 1  \leftrightarrow x=1     \  ] 
\\
 \mathsf{D_6} 
&  \forall x y  \;   [ \ x  \preceq y 0   \leftrightarrow  (  \  x= y 0   \vee x \preceq y \  )     \  ] 
\\
 \mathsf{D_7} 
&  \forall x y  \;   [ \ x  \preceq y 1   \leftrightarrow  (  \  x= y 1   \vee x \preceq y \  )     \  ]
\end{array}
\]

\caption{
Non-logical axioms of the first-order theories  $ \mathsf{WD} $,  $ \mathsf{D} $.
The axioms of $ \mathsf{WD} $  are given by axiom schemes where 
$ \alpha , \beta  , \gamma $ are nonempty binary strings
and $ \overline{ \alpha  } $,  $  \overline{ \beta }  $, $  \overline{ \gamma }  $ are their canonical names.
$ \mathsf{Pref}  ( \alpha )  $ is the set of all nonempty prefixes of $ \alpha $. 
}
\label{AxiomsOfWDandD}
\end{figure}

Since $ \mathsf{D} $ and  $ \mathsf{Q} $  are mutually interpretable, 
we can identify   differences   between  these two theories  by investigating the interpretability degrees of  the theories 
 we obtain  by weakening  axioms  $  \mathsf{D}_4  \!  -  \!  \mathsf{D}_7 $,  $ \mathsf{Q}_3  $  
  which  are  essential  for coding  sequences in  $ \mathsf{D} $ and  $ \mathsf{Q} $. 
In addition to  $ \mathsf{D} $  and  $ \mathsf{WD} $, 
we  introduce   in   \cite{MurwanashyakaAML} two theories   $ \mathsf{ID} $, $ \mathsf{ID}^{ *}  $  
(called $ \mathsf{C} $,  $ \mathsf{BT} $, respectively,   in  \cite{MurwanashyakaAML}) 
and prove that their interpretability degrees are strictly between the degrees of  $ \mathsf{WD} $   and  $ \mathsf{D} $.
But we are not able to determine in   \cite{MurwanashyakaAML} whether   $ \mathsf{ID} $ and  $ \mathsf{ID}^* $ are mutually interpretable.
We obtain   $ \mathsf{ID} $ and $ \mathsf{ID}^{ *}  $   from $ \mathsf{D} $ by replacing    axioms 
$ \mathsf{D}_4  \! - \!   \mathsf{D}_7 $ with  respectively  the axiom schemas
\[
\mathsf{ ID }_4 
\equiv    \   
   \forall x  \;  [  \  x \preceq   \overline{\alpha}    \leftrightarrow  
 \bigvee_{ \gamma  \in \mathsf{Pref}  ( \alpha )   }  x=  \overline{ \gamma}   \   ] 
, 
\       \       
 \mathsf{ID}_4^{ * }   
\equiv     \   
 \forall x [  \  x \sqsubseteq_{ \mathsf{s} }  \overline{\alpha}  \rightarrow  
 \bigvee_{ \gamma \in   \mathsf{Sub}  ( \alpha )    }  x=  \overline{ \gamma} \   ] 
\]
where   $ \alpha $ is a nonempty binary string,  $ \overline{ \alpha } $ is canonical variable-free term that represents $ \alpha $, 
$  \mathsf{Pref}  ( \alpha )  $ denotes the set of all nonempty prefixes of $ \alpha $, 
$   \mathsf{Sub}  ( \alpha )   $ denotes the set of all nonempty substrings of $ \alpha $
  and   $ x  \sqsubseteq_{ \mathsf{s} }  y $ is shorthand for 
\[
 x = y   \;   \vee   \;   \exists   u v  \;   [     \     y = ux   \;   \vee   \;   y = xv    \;   \vee   \;   y = uxv    \      ]  
 \       .
\]
In the standard model, $  x  \sqsubseteq_{ \mathsf{s} }  y $ holds  if and only if $ x \in \mathsf{Sub} (y) $. 
It is easy   to interpret $ \mathsf{ID} $ in  $ \mathsf{ID}^* $ while it is less obvious whether  $ \mathsf{ID}^* $  is interpretable in  $ \mathsf{ID} $  since  the axiom schema  $   \mathsf{ID}_4^{ * }    $ puts strong constraints on the concatenation operator while any model of 
$ \mathsf{D}_1  \! - \!   \mathsf{D}_3 $ can always be extended to a model of  $ \mathsf{ID} $. 
In Section \ref{MutualInterpretabilityOfIDandIDStar}, 
we show  that    $ \mathsf{ID} $ and   $ \mathsf{ID}^{ *}  $   are     mutually interpretable.

Given mutually interpretability of   $ \mathsf{ID} $ and   $ \mathsf{ID}^{ *}  $,
a natural question is  whether the arithmetical  analogues  of  $ \mathsf{ID} $ and   $ \mathsf{ID}^{ *}  $  are also mutually interpretable.
We let   $ \mathsf{IQ} $ and  $ \mathsf{IQ}^* $ be the theories   we obtain from $ \mathsf{Q} $  by replacing     axiom  
$ \mathsf{Q}_3  $  with respectively  the axiom schemas
 \[
\mathsf{IQ}_3   \equiv     \  
 \forall x \;   [         \  x \leq \overline{n}   \leftrightarrow    \bigvee_{k \leq n } x = \overline{k}   \         ]  
 ,  
  \        \   
  \mathsf{IQ}_3^*
\equiv   \   
 \forall x \;   [        \  x \leq_{ \mathsf{l} }   \overline{n}   \rightarrow    \bigvee_{k \leq n } x = \overline{k}   \          ]  
\]
where  $n$ is a natural number,   $ \overline{ n }  $ is a canonical variable-free term that represents $n$, 
$  \leq $ is a fresh binary relation symbol that is realized as the less than or equal relation in the standard model and 
$ x \leq_{ \mathsf{l} }  y  \equiv   \       \exists z  \;  [   \   z + x = y    \   ]  $.
In Section \ref{MututalInterpretabilityOfIQandIQStar}, 
we show that    $ \mathsf{IQ} $ and   $ \mathsf{IQ}^{ *}  $   are   mutually interpretable.

We try to identify differences between concatenation theories and  arithmetical theories  by investigating the comparability of  $ \mathsf{ID} $ and $ \mathsf{IQ} $ with respect to interpretability. 
In Section  \ref{InterpretationOfIDInIQ}, 
we show that  $  \mathsf{IQ}  $ is expressive enough to  interpret  the theory $  \overline{  \mathsf{ID} }  $ 
we obtain by extending    $  \mathsf{ID}  $  with the axioms 
\[
\forall x y \;   [    \   x \neq y \rightarrow    (   \    0x  \neq 0y   \;   \wedge   \;    1x  \neq 1y   \    )        \      ]   
,    \       \   
\forall x y \;   [    \   0 x \neq 1  y       \      ]   
\                .
\]
Since  $  \mathsf{IQ}  $ does not have enough resources for coding general sequences, 
the interpretation we give shows  that we can think of   concatenation theories as naturally contained  in arithmetical theories. 
In Section \ref{CommutativeSemiringProperties0}, 
we show that the idea behind the interpretation of  $  \mathsf{ID}  $  in   $  \mathsf{IQ}  $ allows us to  give a very  simple interpretation of 
$  \mathsf{WD} $ in  $  \mathsf{R} $.
In Section   \ref{RecursionFreeInterpretationOfTCinQ}, 
we show that our  interpretation of    $  \mathsf{ID}  $  in   $  \mathsf{IQ}  $ extends in a natural way  to an interpretation in  
$  \mathsf{Q}  $ of  Grzegorczyk`s theory of   concatenation  $ \mathsf{TC} $     \cite{grz}
(see Figure  \ref{AxiomsOfTC} for the axioms of   $ \mathsf{TC} $). 
We can think of  $ \mathsf{D} $ as a  fragment of $ \mathsf{TC} $ since  $ \mathsf{TC} $ proves all the axioms of $ \mathsf{D} $  when we let 
$ x \preceq y   \equiv   \    x = y  \;  \vee  \;    \exists z   \;   [    \    y = x z   \    ]     $. 
The intended model of $ \mathsf{TC} $ is a  finitely generated free semigroup  with at least two generators. 
We have  not  been able to determine whether  $  \mathsf{IQ}  $ is interpretable in $  \overline{  \mathsf{ID} }  $  
and whether   $  \overline{  \mathsf{ID} }  $  is interpretable in   $  \mathsf{ID}  $.

\begin{figure}
\[
\begin{array}{r l }
&{\large \textsf{The Axioms of } \mathsf{ TC }   }
\\
\\
\mathsf{TC }_1 
& 
\forall x y z \;  [    \   x(yz) = (xy)z   \     ]  
\\
  \mathsf{TC }_2 
&
\forall x y z w  \;    [ \  
( \  xy = zw   \rightarrow    \big( \  ( \ x=z \wedge y=w) \vee  \
\\
& \hspace{2cm}
\exists u  \;   [ \  ( \ z= xu  \wedge uw = y \ ) \vee 
( \  x = zu \wedge uy = w  \ ) \  ]   \  \big)   \  ] 
\\
 \mathsf{ TC }_3
&
\forall x y  \;     [ \  xy \neq 0  \ ] 
\\
 \mathsf{ TC }_4
&
\forall x y   \;     [ \  xy \neq 1  \ ] 
\\
 \mathsf{ TC }_5
&
0 \neq 1
\end{array}
\]

\caption{
Non-logical axioms of the first-order theory   $ \mathsf{TC} $.
}
\label{AxiomsOfTC}
\end{figure}

We summarize our results in the following theorem. 
We let $ S  \leq T $  mean that $ S$ is interpretable in $T$. 
We let  $ S  < T $  mean $ S \leq T  \;   \wedge  \;   T  \not\leq S $. 
We let  $ S \cong T$  mean  $S \leq T   \;  \wedge  \;  T \leq S  $.
We let  $ \overline{ \mathsf{ID} }^{ * }    $ denote the theory we obtain from  $ \overline{ \mathsf{ID} }   $
by replacing $ \mathsf{ID}_4 $ with    $ \mathsf{ID}_4^* $.

\begin{theorem}

\[
\mathsf{R}  \cong   \mathsf{WD}        <     
 \mathsf{ID}    \cong    \mathsf{ID}^{*}  
   \leq  
   \overline{ \mathsf{ID} }    \cong    \overline{ \mathsf{ID} }^{ * }    
      \leq       
       \mathsf{IQ}      \cong     \mathsf{IQ}^{ * }  
         <   
  \mathsf{Q}  \cong   \mathsf{D}  
\         .
\]

\end{theorem}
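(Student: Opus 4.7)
The plan is to verify each link in the chain independently. The equivalences $\mathsf{R} \cong \mathsf{WD}$ and $\mathsf{Q} \cong \mathsf{D}$, together with the strict inequalities $\mathsf{WD} < \mathsf{ID}$ and $\mathsf{ID} < \mathsf{D}$, are established in \cite{MurwanashyakaAML}. The mutual interpretabilities $\mathsf{ID} \cong \mathsf{ID}^*$ and $\mathsf{IQ} \cong \mathsf{IQ}^*$ are the main content of Sections \ref{MutualInterpretabilityOfIDandIDStar} and \ref{MututalInterpretabilityOfIQandIQStar} respectively, while $\overline{\mathsf{ID}} \leq \mathsf{IQ}$ is the interpretation constructed in Section \ref{InterpretationOfIDInIQ}. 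So the bulk of the theorem is a consolidation of results available either in \cite{MurwanashyakaAML} or in the sections announced above.

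The remaining positive links follow by composition and identity. The inclusion $\mathsf{ID} \leq \overline{\mathsf{ID}}$ is the identity interpretation, since $\overline{\mathsf{ID}}$ is by definition $\mathsf{ID}$ together with two extra axioms (left cancellation and left letter separation). The equivalence $\overline{\mathsf{ID}} \cong \overline{\mathsf{ID}}^*$ is obtained by revisiting the two interpretations of Section \ref{MutualInterpretabilityOfIDandIDStar} and verifying that they also validate the two extra axioms on their inner domains; since both interpretations there are built around elements that play the role of binary strings concatenated from $0$ and $1$, the verification should reduce to inspecting how each interpretation realizes left concatenation. Composing $\overline{\mathsf{ID}}^* \cong \overline{\mathsf{ID}}$ with the interpretation of Section \ref{InterpretationOfIDInIQ} yields $\overline{\mathsf{ID}}^* \leq \mathsf{IQ}$. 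Finally, $\mathsf{IQ}^* \leq \mathsf{Q}$ is witnessed by the identity interpretation, because $\mathsf{Q}$ proves every instance of $\mathsf{IQ}_3^*$ by a routine induction on the numeral $\overline{n}$.

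The principal obstacle is the strict inequality $\mathsf{IQ}^* < \mathsf{Q}$, i.e., $\mathsf{Q} \not\leq \mathsf{IQ}^*$. My approach would be model-theoretic: construct a model $\mathcal{M}$ of $\mathsf{IQ}^*$ from the standard model by adjoining a nonstandard element $\infty$ which is neither $0$ nor a successor, extend $S$, $+$, $\times$ so that $\mathsf{Q}_1, \mathsf{Q}_2, \mathsf{Q}_4$--$\mathsf{Q}_7$ continue to hold on the enlarged universe (for example by setting $x + \infty = \infty$ whenever $\infty$ is not $0$ or a successor, and propagating through $\mathsf{Q}_5$ to its $S$-orbit), and define $\leq$ so that $\mathsf{IQ}_3^*$ is satisfied vacuously outside the standard part, i.e., no nonstandard element $\leq_{\mathsf{l}}$-precedes any standard numeral. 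It then remains to show that the complete theory of $\mathcal{M}$ does not interpret $\mathsf{Q}$. The natural route is a non-sequentiality argument: show that $\mathcal{M}$ admits no definable pairing or sequence-coding function rich enough to host an internal domain satisfying $\mathsf{Q}_3$. Verifying this non-sequentiality, and in particular ruling out all the possible relativizations by parameterized formulas, is the main technical hurdle; once it is in place, the full chain follows by transitivity of $\leq$ together with the definitions of $<$ and $\cong$.
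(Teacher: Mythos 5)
Your overall decomposition of the chain is correct, and most of the links are handled exactly as the paper does: citing \cite{MurwanashyakaAML} for $\mathsf{R} \cong \mathsf{WD}$ and $\mathsf{Q} \cong \mathsf{D}$, using Sections \ref{MutualInterpretabilityOfIDandIDStar} and \ref{MututalInterpretabilityOfIQandIQStar} for $\mathsf{ID} \cong \mathsf{ID}^*$ and $\mathsf{IQ} \cong \mathsf{IQ}^*$, Section \ref{InterpretationOfIDInIQ} for $\overline{\mathsf{ID}} \leq \mathsf{IQ}$, the identity translation for $\mathsf{ID} \leq \overline{\mathsf{ID}}$ and $\mathsf{IQ}^* \leq \mathsf{Q}$, and re-inspection of the Section \ref{MutualInterpretabilityOfIDandIDStar} translations for $\overline{\mathsf{ID}} \cong \overline{\mathsf{ID}}^*$.

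The genuine gap is in your treatment of $\mathsf{Q} \not\leq \mathsf{IQ}^*$ (equivalently $\mathsf{Q} \not\leq \mathsf{IQ}$, since $\mathsf{IQ} \cong \mathsf{IQ}^*$). You acknowledge that the non-sequentiality argument for your bespoke model $\mathcal{M}$ is ``the main technical hurdle,'' and it is: ruling out every parameterized relativization inside a specific infinite structure is a hard, open-ended verification. Worse, your construction does not obviously avoid the failure mode where $\mathbb{N}$ (or some copy of $\mathsf{Q}$) turns out to be definable in $\mathcal{M}$ --- for instance, depending on how you extend $+$ and $\times$ to the $S$-orbit of $\infty$, the standard part may become a definable cut, and then $\mathrm{Th}(\mathcal{M})$ \emph{does} interpret $\mathsf{Q}$. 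The paper sidesteps all of this with a compactness-and-decidability argument: $\mathsf{Q}$ is finitely axiomatized, so any interpretation of $\mathsf{Q}$ in $\mathsf{IQ}$ would use only finitely many instances of the schema $\mathsf{IQ}_3$, and hence $\mathsf{Q}$ would be interpretable in a finite subtheory of $\mathsf{IQ}$; every finite subtheory of $\mathsf{IQ}$ is interpretable in the first-order theory of $(\mathbb{R}, 0, 1, +, \times)$, which Tarski showed is decidable; but by Proposition 3(3), if an essentially undecidable theory is interpretable in a consistent theory $T$ then $T$ is essentially undecidable, so a decidable theory cannot host $\mathsf{Q}$. This yields $\mathsf{Q} \not\leq \mathsf{IQ}$ in a few lines without any fresh model construction. (The paper applies the same style of compactness argument to $\mathsf{WD} < \mathsf{ID}$: finite subtheories of $\mathsf{WD}$ have finite models, while every model of $\mathsf{ID}_1$--$\mathsf{ID}_3$ is infinite.) You should replace your non-sequentiality plan with this reduction.
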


It is not difficult to see that the two strict inequalities $ \mathsf{WD} < \mathsf{ID} $,   $ \mathsf{IQ} < \mathsf{Q} $ hold. 
If  $ \mathsf{ID}  $ were interpretable in    $ \mathsf{WD}   $,
 then  $ \mathsf{ID}_1  \!  -   \!    \mathsf{ID}_3   $ would be interpretable  in a finite subtheory of   $ \mathsf{WD} $. 
Since   any model of $  \mathsf{ID}_1  \!   -  \!    \mathsf{ID}_3     $ is infinite while any finite subtheory of   $ \mathsf{WD} $ has a finite model, 
 $ \mathsf{ID} $ is not interpretable in  $  \mathsf{WD} $. 
 Similarly, if  $ \mathsf{Q} $ were interpretable in   $ \mathsf{IQ}  $, it would be interpretable in a finite subtheory of  $ \mathsf{IQ}  $. 
 But, any finite subtheory of  $ \mathsf{IQ}  $ is interpretable in the first-order theory of the field of real numbers $ (  \mathbb{R} , 0, 1, + , \times   ) $,
 which was shown to be decidable   by Tarski   \cite{tarski1948}. 
Since  $ \mathsf{Q} $ is essentially undecidable,  it  is not interpretable in $  \mathsf{IQ} $.

\section{Preliminaries }

In this section, we clarify a number of notions that we only glossed over in the previous section.

\subsection{Notation and Terminology}

We consider the structures 
\[
 \mathfrak{D^{-} } =  (   \lbrace \boldsymbol{0}, \boldsymbol{1}  \rbrace^{ + } , \boldsymbol{0}, \boldsymbol{1}, ^\frown ) 
 \   \  \mbox{   and    }    \    \   
   \mathfrak{D} =  ( \lbrace \boldsymbol{0}, \boldsymbol{1}  \rbrace^{ + } , \boldsymbol{0}, \boldsymbol{1}, ^\frown , \preceq^{ \mathfrak{D} } ) 
   \]
  where $ \lbrace \boldsymbol{0}, \boldsymbol{1}  \rbrace^{ + } $ is the set of all finite non-empty strings over the alphabet $\lbrace  \boldsymbol{0}, \boldsymbol{1}  \rbrace $, the binary operator $^\frown $ concatenates elements of $ \lbrace \boldsymbol{0}, \boldsymbol{1}  \rbrace^{ + } $ and $ \preceq^{ \mathfrak{D} }   $ denotes the prefix relation, i.e., $x \preceq^{ \mathfrak{D} }  y $ if and only if $y = x$ or there exists $z \in  \lbrace \boldsymbol{0}, \boldsymbol{1}  \rbrace^{ + } $ such that $y = x ^\frown z $. 
The structure $ \mathfrak{D^{-} } $ is thus the free semigroup with two generators. 
We call elements of $ \lbrace \boldsymbol{0}, \boldsymbol{1}  \rbrace^{ + } $   \emph{bit strings}. 
The structures $ \mathfrak{D^{-} } $ and $ \mathfrak{D} $ are  first-order structures over the languages 
$\mathcal{L}_{ \mathsf{BT} }^{-} =   \lbrace 0, 1, \circ \rbrace $ and 
$\mathcal{L}_{ \mathsf{BT} }  =  \lbrace 0, 1, \circ, \preceq  \rbrace $, respectively.

The language of first-order arithmetic is $ \mathcal{L}_{ \mathsf{NT} } = \lbrace 0,  \mathrm{S} , +, \times \rbrace $
and we denote by $ ( \mathbb{N} , 0,  \mathrm{S} , +, \times ) $ the standard first-order structure. 
In first-order number theory, each natural number $n$ is associated with a numeral $ \overline{n} $ by recursion:
$ \overline{0} \equiv   \  0 $ and  $ \overline{n+1} \equiv   \  \mathrm{S}    \overline{n}  $. 
Each non-empty  bit string $ \alpha \in  \lbrace \boldsymbol{0}, \boldsymbol{1}  \rbrace^{ + } $ is associated by recursion with a unique $\mathcal{L}_{ \mathsf{BT} }^{-}$-term $ \overline{\alpha} $, called a \emph{biteral}, as follows: 
$ \overline{ \boldsymbol{0} }  \equiv 0$,  $ \overline{ \boldsymbol{1} }  \equiv 1$, 
$   \overline{  \alpha \boldsymbol{0} }  \equiv ( \overline{ \alpha } \circ  0 )$ and  
$   \overline{ \alpha  \boldsymbol{1} }  \equiv ( \overline{ \alpha } \circ  1 )$. 
The biterals are important if we, for example,  want to show that certain sets are definable since we then need to talk about elements of $ \lbrace \boldsymbol{0}, \boldsymbol{1}  \rbrace^{ + } $ in the formal theory.

A class is a formula  with at least  one free variable. 
Given a class $ I $ with  $n$ free variables, 
we write $ (x_1,   \ldots , x_n  )  \in I $ for $  I (x_1,  \ldots  , x_n) $. 
If $I$ has two free variables, we also write $ x I y $ for $ I(x, y ) $.
We let $ (  \exists x_1,  \ldots  , x_n )  \in I \;    \phi   $ and  $  ( \forall  x_1,  \ldots  , x_n )   \in I \;  \phi $
 be shorthand   for the formulas 
 $  \exists  x_1,  \ldots  , x_n   \;   [    \   I(x_1,  \ldots  , x_n)    \;  \wedge \;   \phi  \  ]   $ 
 and $  \forall x_1,  \ldots  , x_n  \;   [    \   I(x_1,  \ldots  , x_n) \rightarrow  \phi  \  ]   $, respectively. 
We let  $  \lbrace (x_1,  \ldots  , x_n)   \in I  :   \   \psi     \rbrace $  be shorthand for  $   I ( x_1,  \ldots  , x_n  )  \;   \wedge  \;   \psi $.

\subsection{Translations and  Interpretations}

We recall the method of relative interpretability introduced by Alfred Tarski  \cite{tarski1953}  for showing that first-order theories are essentially undecidable.
We restrict ourselves to   many-dimensional parameter-free one-piece relative interpretations. 
Let $\mathcal{L}_{1}$ and $\mathcal{L}_{2}$ be computable  first-order languages.
A \emph{relative translation}  $\tau$ from $\mathcal{L}_{1}$ to  $\mathcal{L}_{2}$ is a computable map   given by: 
\begin{enumerate}

\item An $\mathcal{L}_{2}$-formula $  \delta(x_1,  \ldots , x_m)   $ with exactly $m$ free variable. 
The formula $   \delta(x_1,  \ldots , x_m)  $ is called a domain.

\item For each $n$-ary relation symbol $R$ of $\mathcal{L}_{1}$,    an   $\mathcal{L}_{2}$-formula 
$  \psi_{R}(  \vec{x}_1   ,  \ldots ,   \vec{x}_n    )$ with exactly $mn$ free variables.
The equality symbol $=$  is treated as a binary relation symbol.

\item For each $n$-ary function symbol $f$ of $\mathcal{L}_{1}$, an $\mathcal{L}_{2}$-formula 
$\psi_{f}(  \vec{x}_1   ,  \ldots ,   \vec{x}_n ,  \vec{y}  )$ with exactly $m(n+1)$ free variables.

\item For each constant symbol $c$ of $\mathcal{L}_{1}$, an $\mathcal{L}_{2}$-formula 
$\psi_{c}(  \vec{y}  )  $ with exactly $m$ free variables.

\end{enumerate}

We extend $\tau$ to a translation of  atomic $\mathcal{L}_{1}$-formulas by mapping an   $\mathcal{L}_{1}$-term $t$ to an  
$\mathcal{L}_{2}$-formula $ ( t )^{ \tau , \vec{w} } $ with  free variables $  \vec{w}  $ that denote the value of $ t  $: 
\begin{enumerate}

\item[5.] For each  $n$-ary relation symbol $R$  of $\mathcal{L}_{1}$
\[
 \big( R(t_1,  \ldots , t_n )   \big)^{ \tau }   \equiv   \  
\exists  \vec{v}_1   \ldots  \vec{v}_n   \;     [     \    \bigwedge_{ i = 1 }^{ n }  \delta (  \vec{v}_i )    
\    \wedge    \   
 \bigwedge_{ j = 1 }^{ n } (  t_j )^{ \tau ,  \vec{v}_j    }    
\    \wedge    \    
\psi_{ R }   (  \vec{v}_1   \ldots  \vec{v}_n    )    
\         ]
\]
where  $  \vec{v}_1   \ldots  \vec{v}_n  $  are distinct variable symbols that do not occur in $ t_1,  \ldots , t_n $
and 
\begin{enumerate}

\item  for each variable symbol $x$ of  $\mathcal{L}_{1}$,   
$ \;     (x)^{ \tau ,  \vec{w} }  \equiv  \     \bigwedge_{ i = 1 }^{ m }   w_i = x_i        \; $

\item  for  each constant symbol $c$ of $\mathcal{L}_{1}$,  $    \;      (c)^{ \tau , \vec{w}  }  \equiv   \    \psi_{c} ( \vec{w}  )     \;   $

\item  for each  $n$-ary function  symbol $f$  of $\mathcal{L}_{1}$  
\begin{multline*}
 \big( f(t_1,  \ldots , t_n )   \big)^{ \tau  , \vec{w}   }   \equiv   \  
 \\
\exists   \vec{w}_1   \ldots    \vec{w}_n     \;     [     \    \bigwedge_{ i = 1 }^{ n }  \delta  (   \vec{w}_i   )    
\    \wedge    \   
 \bigwedge_{ j = 1 }^{ n } (  t_j )^{ \tau ,  \vec{w}_j  }      
\    \wedge    \    
\psi_{ f }   (  \vec{w}_1   \ldots    \vec{w}_n    ,   \vec{w}   )    
\         ]
\end{multline*}
where  $\vec{w}_1   \ldots    \vec{w}_n   $  are distinct variable symbols  that do not occur in  
$  \bigwedge_{ j = 1 }^{ n } (  t_j )^{ \tau , \vec{w}  }    \;   $.

\end{enumerate}

\end{enumerate}

We extend $\tau$ to a translation of all $\mathcal{L}_{1}$-formulas as follows: 
\begin{enumerate}

\item[6.] $( \neg \phi)^{\tau} \equiv  \ \neg \phi^{\tau} $

\item[7.] $( \phi  \oslash \psi )^{\tau} \equiv  \phi^{\tau}  \oslash  \psi^{\tau} $ for $\oslash  \in \lbrace \wedge, \vee,  \rightarrow,  \leftrightarrow \rbrace$

\item[8.] $( \exists x \;  \phi)^{\tau} \equiv  \  \exists \vec{x}  \;   [    \     \delta( \vec{x}  )   \;   \wedge  \;     \phi^{\tau}      \          ]   $

\item[9.] $( \forall x  \;   \phi)^{\tau} \equiv  \  \forall  \vec{x}   \;   [   \ \delta( \vec{x}  ) \rightarrow \phi^{\tau}  \     ]     \;   $.

\end{enumerate}
Let $ \large{S}  $  be an $\mathcal{L}_{1}$-theory and let  $ \large{T}  $ be an $\mathcal{L}_{2}$-theory. 
We say that $ \large{S} $ is \emph{(relatively)  interpretable}  in $ \large{T} $  if there exists a relative translation $\tau$ such that 
\begin{itemize}

\item[-]   $ \large{T}   \vdash \exists  \vec{x}  \;   \delta( \vec{x} )  $

\item[-] For each function symbol $f$ of $\mathcal{L}_{1}$
\[
\large{T}    \vdash \bigwedge_{i=1}^{n} \delta(  \vec{x}_i   )  \rightarrow 
\exists !   \vec{y}  \;        [      \       \delta( \vec{y} ) \wedge \psi_{f}(  \vec{x}_1 ,   \ldots  ,  \vec{x}_n  ,    \vec{y}  )         \           ]     
\           .
\]

\item[-] For each constant symbol $c$ of $\mathcal{L}_{1}$
\[
\large{T}     \vdash 
\exists ! \vec{y}    \;          [        \    \delta( \vec{y} ) \wedge \psi_{c}(  \vec{y} )             \               ]   
 \            .
\]

\item[-]  $ \large{T} $ proves $\phi^{\tau}$ for each non-logical axiom $\phi$ of $ \large{S} $. 
If equality is not translated as equality,  then $ \large{T} $ must prove the translation of each equality axiom.

\end{itemize}
If $ \large{S} $ is relatively interpretable in $ \large{T} $  and $ \large{T} $  is relatively interpretable in $ \large{S} $, 
we say that $ \large{S} $ and $ \large{T} $   are  \emph{mutually interpretable}.

The following proposition summarizes important properties of relative interpretability (see Tarski et al.  \cite{tarski1953} for the details).

\begin{proposition}
Let $ \large{S} $, $ \large{T} $ and $ \large{U} $  be computably enumerable  first-order theories. 
\begin{enumerate}
\item If  $ \large{S} $  is interpretable in $ \large{T} $  and $ \large{T} $  is consistent, then $ \large{S} $  is consistent. 

\item If $ \large{S} $ is interpretable in $ \large{T} $ and $ \large{T} $  is interpretable in $ \large{U} $, then $ \large{S} $  is interpretable in $ \large{U} $.

\item If  $ \large{S} $ is interpretable in $ \large{T} $  and $ \large{S} $ is essentially undecidable, then $ \large{T} $  is essentially undecidable. 
\end{enumerate}
\end{proposition}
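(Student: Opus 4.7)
The plan is to treat each of the three clauses using a single underlying bookkeeping lemma: if $\tau$ is a translation witnessing $\large{S}\le\large{T}$, then for every $\mathcal{L}_{1}$-formula $\phi$, the implication $\large{S}\vdash \phi \;\Rightarrow\; \large{T}\vdash \phi^{\tau}$ holds. I would prove this by induction on the length of an $\large{S}$-derivation: the axiom case is exactly the last bullet in the definition of interpretation (including the translated equality axioms if equality is not translated by equality), and one has to check that translations commute with the proof rules. For modus ponens this is trivial; for generalization one uses that $\tau$ relativizes $\forall$ to $\delta$, together with the clauses $\large{T}\vdash \exists \vec{x}\,\delta(\vec{x})$ and the unique-existence clauses for function and constant symbols, which guarantee that on $\delta$ the translated terms behave as genuine terms. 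This lemma is the real work; the three clauses are easy corollaries.

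For clause (1), suppose $\large{S}$ were inconsistent. Then $\large{S}\vdash \phi\wedge\neg\phi$ for some $\mathcal{L}_{1}$-sentence $\phi$, and by the lemma $\large{T}\vdash \phi^{\tau}\wedge\neg\phi^{\tau}$, contradicting consistency of $\large{T}$. For clause (2), let $\tau_{1}$ witness $\large{S}\le\large{T}$ (say $m$-dimensional with domain $\delta_{1}$) and $\tau_{2}$ witness $\large{T}\le\large{U}$ (say $k$-dimensional with domain $\delta_{2}$). I would define the composition $\tau=\tau_{2}\circ\tau_{1}$ to be the $mk$-dimensional translation whose domain on $mk$-tuples $(\vec{y}_{1},\ldots,\vec{y}_{m})$ is $\bigwedge_{i=1}^{m}\delta_{2}(\vec{y}_{i})\wedge(\delta_{1})^{\tau_{2}}(\vec{y}_{1},\ldots,\vec{y}_{m})$, and whose clause for each symbol of $\mathcal{L}_{1}$ is obtained by applying $\tau_{2}$ to the corresponding clause of $\tau_{1}$. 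A routine induction gives $\phi^{\tau}\equiv(\phi^{\tau_{1}})^{\tau_{2}}$ up to logical equivalence in $\large{U}$, after which the non-logical axioms of $\large{S}$ translate to theorems of $\large{U}$ by two applications of the basic lemma.

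For clause (3), I argue by contraposition: assume $\large{T}$ is \emph{not} essentially undecidable, so some consistent extension $\large{T}'\supseteq \large{T}$ in $\mathcal{L}_{2}$ is decidable. Define
\[
\large{S}'\;=\;\{\,\phi\in\mathcal{L}_{1}\text{-sentence}:\large{T}'\vdash\phi^{\tau}\,\}.
\]
The basic lemma gives $\large{S}\subseteq \large{S}'$, so $\large{S}'$ is an extension of $\large{S}$. Consistency of $\large{S}'$ follows exactly as in (1), since $\large{T}'$ is consistent. The map $\phi\mapsto\phi^{\tau}$ is computable (translations are computable by definition), and membership in $\large{T}'$ is decidable, so membership in $\large{S}'$ is decidable. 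This contradicts essential undecidability of $\large{S}$.

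The only step that requires genuine attention is the basic translation lemma in the case of the quantifier and equality rules when equality is not translated by equality; the rest is clerical. I would therefore state that lemma explicitly at the start of the proof and then dispatch the three clauses in the order above.
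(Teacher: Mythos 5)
The paper does not give its own proof of this proposition; it explicitly delegates it to Tarski, Mostowski and Robinson \cite{tarski1953}, and your argument is exactly the standard one from that source. The translation lemma ($S\vdash\phi\Rightarrow T\vdash\phi^\tau$) is the correct backbone, your composition of interpretations in clause (2) is the usual $mk$-dimensional construction, and in clause (3) the key move — pulling back a decidable consistent extension $T'$ of $T$ along $\tau$ to get a decidable consistent extension $S'=\{\phi:T'\vdash\phi^\tau\}$ of $S$ — is precisely how Tarski proves it; the only point worth spelling out is that $S'$ is deductively closed, which follows from applying the translation lemma to derivations from finite subsets of $S'$ (this is available because $T'\supseteq T$ still proves the domain, function and constant conditions of $\tau$).
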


\section{Mutual Interpretability of $ \mathsf{ID} $ and $ \mathsf{ID}^{*}  $}
\label{MutualInterpretabilityOfIDandIDStar}

\begin{figure}[hbt!]

\[
\begin{array}{r l  c   r l  } 
&{\large \textsf{The Axioms of } \mathsf{ID} }
\\
\\
\mathsf{ ID_{1} } 
&  \forall x  y  z   \;   [ \  (x y) z = x (y z)    \ ]  
\\
\mathsf{ ID_{2} } 
&  \forall x y \; [ \  x \neq y \to   ( \ x   0 \neq y   0  \wedge  x   1 \neq y   1 \   )   \ ]  
\\
\mathsf{ ID_{3} } 
&   \forall x y  \;   [    \     x0 \neq y1       \        ]   
\\
\mathsf{ ID_{4} } 
&    \forall x  \;  [  \  x \preceq   \overline{\alpha}    \leftrightarrow  
 \bigvee_{ \gamma  \in \mathsf{Pref}  ( \alpha )   }  x=  \overline{ \gamma}   \   ] 
\\
\\
& {\large \textsf{The Axioms of } \mathsf{ID}^* }
\\
\\
& \mathsf{ID}_1 ,   \    \mathsf{ID}_2  ,   \    \mathsf{ID}_3   
\\
 \mathsf{ID}_4^{ * }   
&   \forall x [  \  x \sqsubseteq_{ \mathsf{s} }  \overline{\alpha}  \rightarrow  
 \bigvee_{ \gamma \in   \mathsf{Sub}  ( \alpha )    }  x=  \overline{ \gamma} \   ] 
\end{array}
\] 
\caption{
Non-logical axioms of the first-order theories  $ \mathsf{ID} $ and  $ \mathsf{ID}^* $.
 $ \mathsf{ID}_4 $  and $ \mathsf{ID}_4^{ *}  $ are  axiom schemas where $ \alpha $ is a  nonempty binary string, 
 $ \mathsf{Pref}  ( \alpha )  $ is the set of all nonempty prefixes of $ \alpha $ 
 and  $  \mathsf{Sub}  ( \alpha )   $ is the set of all nonempty substrings of $ \alpha $. 
Furthermore, 
$
x  \sqsubseteq_{ \mathsf{s} } y 
 \equiv    \   
 x = y   \;   \vee   \;   \exists   u v  \;   [     \     y = ux   \;   \vee   \;   y = xv    \;   \vee   \;   y = uxv    \      ]  
$.
}
\label{AxiomsOfIDandIDStar}
\end{figure}

In this section, we show that  $ \mathsf{ID} $ and   $ \mathsf{ID}^{*}  $   are mutually interpretable
(see Figure \ref{AxiomsOfIDandIDStar} for the axioms of   $ \mathsf{ID} $ and   $ \mathsf{ID}^{*}  $). 
It is easy to see that   $ \mathsf{ID} $  is interpretable    in $ \mathsf{ID}^{*}  $.
We therefore   need to focus on the more difficult task of proving   that  
$ \mathsf{ID}^{*}  $   is interpretable  in   $ \mathsf{ID} $. 
It is more difficult to interpret  $ \mathsf{ID}^{*}  $  in   $ \mathsf{ID} $  because 
the axiom schema  $ \mathsf{ID}^{*}_4 $ puts strong  constraints on the concatenation operator  
while it is always possible to extend any model of  $ \mathsf{ID}_1$,  $ \mathsf{ID}_2$, $ \mathsf{ID}_3$  to a model of $ \mathsf{ID} $.  
For example, we can have models of  $ \mathsf{ID} $ where  there exist infinitely many pairs $ x, y $ such that 
$ x  y = \overline{ \alpha } $ for each nonempty string $ \alpha $. 
Indeed, consider the model where the universe is the Cartesian product $ \prod_{ i < \omega }  \lbrace 0, 1 \rbrace^* $, 
concatenation is componentwise and each  binary string $ \beta $ is mapped to the constant sequence 
$ ( \beta  )_{ i < \omega } $.

To interpret  $ \mathsf{ID}^{*}  $  in   $ \mathsf{ID} $, we need to use the axiom schema  $ \mathsf{ID}_4 $
in an essential way  to define a function $ \star $ that provably in   $ \mathsf{ID} $ satisfies the translation of each axiom of   
$ \mathsf{ID}^{*}  $. 
The idea is to observe that since we have   the right cancellation law in the weak form of  $ \mathsf{ID}_2 $, 
if we had an axiom schema for the suffix relation, denoted  $ \preceq_{ \mathsf{suff} } $,  analogues to   $ \mathsf{ID}_4 $, 
we could try to define $ \star $ by requiring that   $ x \star y = xy   $ only if    $  y \preceq_{ \mathsf{suff} }   xy  $. 
If $ xy  $ is a variable-free term and $ y \preceq_{ \mathsf{suff} } xy $,
then   the axiom schema for the suffix  relation  gives us a finite number of possibilities for  the value of  $ y  $. 
If we also knew  that $0$ and $ 1 $ were  atoms/ indecomposable, 
we  would  be   able to use $ \mathsf{ID}_2 $ and  $ \mathsf{ID}_3 $   to   determine  that   $x$ and  $y$ are also variable-free terms.
To make this idea work, we need to ensure that $ \star $ is associative. 
Our solution is to show that extending  $ \mathsf{ID} $ with an axiom schema for   $ \preceq_{ \mathsf{suff} }  $ and 
the axiom  $  \forall x y \;  [   \    y  \preceq_{ \mathsf{suff} }    x  y    \    ]    $
does not change the interpretability degree.

This section is organized as follows: 
In Section \ref{FirstSectionOnAtoms}, 
we show that we can extend $ \mathsf{ID} $ to a theory  $ \mathsf{ID}^{ (2) } $ with the same interpretability degree 
where $0$ and $1$ are atoms. 
In Section   \ref{SuffixRelationSection}, 
we show that  we can extend  $ \mathsf{ID}^{ (2) } $ to a theory  $ \mathsf{ID}^{ (3) } $ with the same interpretability degree 
where we have an axiom schema for the suffix  relation  $ \preceq_{ \mathsf{suff} } $
analogues to the axiom schema $ \mathsf{ID}_4 $.
In  Section \ref{SubstringRelationSection},    we extended  $ \mathsf{ID}^{ (3) } $  to a theory $ \mathsf{ID}^{ (4) } $  with the same interpretability degree  and  where we have an axiom schema for the substring relation, analogues to   $ \mathsf{ID}_4 $. 
In Section \ref{SuffixRelationIISection}, 
we use the axiom schema for the substring relation to extend   $ \mathsf{ID}^{ (4) } $  to a theory $ \mathsf{ID}^{ (5) } $  with the same interpretability degree  and where the suffix  relation  $  \preceq_{ \mathsf{suff} }  $   satisfies   additional properties. 
Finally, in Section \ref{InterpretationOfIDBarinIDSection}, 
we   show that  $ \mathsf{ID}^{ * }  $ is interpretable in  $ \mathsf{ID}^{ (5) } $.

\subsection{Atoms }
\label{FirstSectionOnAtoms}

It will prove useful later  to know that $0$ and $1$ are atoms. 
So, let $ \mathsf{ID}^{ (2) } $  be $ \mathsf{ID} $   extended with the axioms 
 \[
  \mathsf{AT0}   \equiv   \    \forall  xy    \;    [     \    xy \neq 0    \    ]   
  ,  \          \   
  \     
  \mathsf{AT1}   \equiv   \    \forall  xy    \;    [     \    xy \neq 1    \    ]  
   \          .
   \]

 \begin{lemma}  \label{AtomsLemma}

 $ \mathsf{ID} $  and    $ \mathsf{ID}^{ (2) } $ are mutually interpretable.

 \end{lemma}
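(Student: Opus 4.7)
The direction $\mathsf{ID} \leq \mathsf{ID}^{(2)}$ is immediate via the identity translation, since $\mathsf{ID}^{(2)}$ is obtained from $\mathsf{ID}$ merely by adjoining the two axioms $\mathsf{AT0}$ and $\mathsf{AT1}$.

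For the harder direction $\mathsf{ID}^{(2)} \leq \mathsf{ID}$, the plan is to construct a one-dimensional relative interpretation $\tau$ that restricts the universe of $\mathsf{ID}$ to a definable subclass $\delta(x)$ and reinterprets the constants $0, 1$ by compound terms that are forced to be atomic inside $\delta$. My first attempt will be to set $\delta(x) \equiv \exists u\, [\, x = u 0 \vee x = u 1\, ]$ (``$x$ has an exposed last bit''), interpret the constant $0$ by $\psi_{0}(y) \equiv y = 00$ and the constant $1$ by $\psi_{1}(y) \equiv y = 11$, and translate concatenation and the prefix relation as themselves. Then $\delta$ contains the reinterpreted atoms $00$ and $11$; $\delta$ is closed under concatenation because the last bit of $xy$ is the last bit of $y \in \delta$; the translations of $\mathsf{ID}_{1}, \mathsf{ID}_{2}, \mathsf{ID}_{3}$ inherit directly from the originals; and the translation of $\mathsf{ID}_{4}$ reduces, via the original schema applied to the ``inflated'' numerals (each symbol $0$ replaced by $00$, each $1$ by $11$), to checking that every prefix of such an inflated numeral that lies in $\delta$ is itself an inflated prefix. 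This excludes the spurious length-one prefixes (the original $0, 1 \in \mathsf{ID}$), and this exclusion is ensured by the design of $\delta$ since neither $0$ nor $1$ is visibly of the form $u 0$ or $u 1$ inside the numeral terms produced by the translation.

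The expected main obstacle is verifying the translated atom axioms $\mathsf{AT0}^{\tau}$ and $\mathsf{AT1}^{\tau}$. Assume $x, y \in \delta$ and $x y = 00$. By $\delta(y)$, either $y = u 1$, in which case $(xu)1 = 00$ contradicts $\mathsf{ID}_{3}$; or $y = u 0$, in which case $\mathsf{ID}_{2}$ yields $xu = 0$. This residual equation is not refutable from $\mathsf{ID}$ alone in pathological models, so the argument must use $\delta(x)$ to decompose $x$ further and iterate, using $\mathsf{ID}_{3}$ to kill the mixed-last-bit cases at each step. If the simplest $\delta$ does not close this recursion, the expected fix is to thicken $\delta$ by demanding a longer suffix pattern, for instance $\delta(x) \equiv \exists u\, [\, x = u 00 \vee x = u 11\, ]$, and correspondingly reinterpret $0, 1$ by longer compound terms, so that after several applications of $\mathsf{ID}_{2}$ the residual equation is forced into a shape violating $\mathsf{ID}_{3}$. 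The heart of the proof is choosing $\delta$ together with the interpretations of $0$ and $1$ so that this induction bottoms out.
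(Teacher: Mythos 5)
The trivial direction ($\mathsf{ID}\leq\mathsf{ID}^{(2)}$) is fine. Your plan for the hard direction has a genuine gap that the ``thickening'' fix does not repair, and it stems from reinterpreting the constants. If $0$ is translated as $00$ and $1$ as $11$ while $\preceq$ is kept unchanged, then the translated numeral $\overline{\alpha}^{\tau}$ is the ``inflated'' string (e.g.\ $\overline{01}^{\tau}=0011$). The original $\mathsf{ID}_4$ applied to $0011$ lists \emph{all} its prefixes, $0,00,001,0011$, while the translated $\mathsf{ID}_4$ demands that the only prefixes \emph{inside $\delta$} be the inflated prefixes $00,0011$. You therefore must prove in $\mathsf{ID}$ that $001\notin\delta$ \emph{and} that $0\notin\delta$. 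The first is provable from $\mathsf{ID}_2,\mathsf{ID}_3$, but ``$0\notin\delta$'' amounts to refuting $\exists u\,[\,0=u0\vee 0=u1\,]$ (or $\exists u\,[\,0=u00\vee 0=u11\,]$, etc., for thicker $\delta$), and that is precisely the atom property $\mathsf{AT0}$, which $\mathsf{ID}$ does not prove. Your appeal to ``$0$ not visibly of the form $u0$'' conflates syntactic appearance with provability: in a nonstandard model of $\mathsf{ID}$ one can have $0=u0$, so $0$ can lie in your $\delta$ without being any inflated prefix. This is the very same obstruction you correctly flagged for $\mathsf{AT0}^{\tau}$, but it recurs on the $\mathsf{ID}_4$ side, and thickening $\delta$ pushes the problem around rather than eliminating it.

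The paper avoids this entirely by translating \emph{every} non-logical symbol, including $0$ and $1$, as itself; only the domain is restricted. Then the translated numerals are unchanged, and the translated $\mathsf{ID}_4$ is simply the relativization of the original schema, which is an outright logical consequence. All the real work goes into choosing the domain $K$ so that the atom sentences hold on it: first $K_1$ is the class of elements that are $0$, $1$, or of the form $y0$ or $y1$; then $K_2=\{y\in K_1:\forall x\in K_1\,[\,xy\neq 0\wedge xy\neq 1\,]\}$; then $K$ is the right-multiplication closure of $K_2$ inside $K_2$. The crucial step, proving $0,1\in K_2$, uses $\mathsf{ID}_2$ and $\mathsf{ID}_3$ after left-multiplying a hypothetical bad equation $xb=a$ by $dd$ where $d$ differs from the last bit of $x$. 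If you want to salvage your approach you should drop the reinterpretation of the constants and instead bake the atom property directly into the definition of the domain, in the spirit of $K_2$ above.
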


\begin{proof}

Since $ \mathsf{ID}^{ (2) } $  is an  extension of  $ \mathsf{ID} $, 
it suffices to show that  $ \mathsf{ID}^{ (2) } $  is interpretable in  $ \mathsf{ID} $. 
Since  the axioms of  $ \mathsf{ID} $  are  universal sentences, 
it suffices to relativize quantification to a domain $K$ on which the sentences    $ \mathsf{AT0}  $,   $ \mathsf{AT1}  $ hold.
We obtain $K$ by successively restricting  the universe   to subclasses with nice properties.

Let 
\[
K_1 = \lbrace x  :   \   x = 0  \;   \vee \;    x= 1 \;   \vee \;    \exists y \;   [    \
  x = y0    \;   \vee \;      x = y1      \    ]       \    \rbrace 
\         .
\]
Clearly, $ 0, 1 \in K_1 $. 
Since concatenation is associative,  $K_1 $ is closed under concatenation.

Let 
\[
K_2 = \lbrace y \in K_1   :    \   
\forall x    \in K_1    \;   [    \    \bigwedge_{  a \in \lbrace 0,1  \rbrace }      x  y \neq a        \    ]  
\    \rbrace 
\         .
\]
We show that $0, 1 \in K_2 $. 
Let $ a, b  \in \lbrace 0,1 \rbrace $ and let   $x    \in K_1 $.
We need to show  $ xb \neq a $. 
Assume for the sake of a contradiction that $ xb = a $.
Since $ x \in K_1 $, 
let $ c  \in \lbrace 0,1 \rbrace  $ be such that $ x = c $ or $x = uc $ for some $ u$. 
Let $ d  \in  \lbrace 0,1 \rbrace   \setminus \lbrace c \rbrace $. 
Then, $ xb = a $ implies $ ddxb = dda $. 
By $ \mathsf{ID}_3 $ and $ \mathsf{ID}_2 $, $ ddx = dd $, 
 which contradicts  $ \mathsf{ID}_3 $. 
Thus, $ 0, 1 \in K_2 $.

We now show that $K_2 $ is closed under the maps $ x \mapsto x0 $, $ x \mapsto x1 $.
Let $ y  \in K_2 $ and let $ b \in \lbrace 0,1 \rbrace $. 
We need to show that $ yb \in  K_2 $. 
Since  $ y, b \in K_2 \subseteq K_1 $ and  $ K_1 $ is closed under concatenation, $ yb \in K_1 $. 
Now, let  $ a  \in \lbrace 0,1 \rbrace $ and let   $x    \in K_1 $.
We need to show  $ x yb \neq a $. 
Assume for the sake of a contradiction that $ x y b = a $. 
Then, $ ax yb = aa $. 
By  $ \mathsf{ID}_3 $ and $ \mathsf{ID}_2 $, 
we have $ ax y = a $, which contradicts $ y \in K_2 $ since  $ ax \in K_1 $ as 
$  a, x \in K_1 $ and $K_1 $  is closed under concatenation.
Thus, $K_2 $ is closed under the maps $ x \mapsto x0 $, $ x \mapsto x1 $.

The class $K_2 $  is not a domain since it may not be  closed under concatenation.
We obtain $K$ by restricting $K_2 $ to a subclass that contains $0$ and  $1$ and is closed under concatenation. 
Let 
\[
K  = \lbrace  w \in K_2  :   \ 
  \forall z \in K_2   \;      [    \    zw  \in K_2    \      ]  
     \     \rbrace 
     \         .
\]
We have $ 0, 1 \in K $ since $K_2 $ contains $0, 1 $ and   is closed under the maps $ x \mapsto x0 $, $ x \mapsto x1 $.
We now show that $K$ is closed under concatenation. 
Let $ w_0 , w_1 \in K$. 
We need to show that $ w_0 w_1 \in K$. 
Since $ w_0 \in K \subseteq K_2 $ and $ w_1 \in K $, we have $ w_0 w_1 \in K_2 $. 
Now, let $ z \in K_2 $.  We need to show that $ z w_0 w_1 \in K_2 $. 
We do not worry about parentheses since  $ \mathsf{ID}_1 $ tells us that concatenation is associative. 
Since $ w_0 \in K$, we have $ z w_0 \in K_2 $. 
Since $ w_1 \in K$, we have $ z w_0 w_1 \in K_2 $. 
Hence, $ w_0 w_1 \in K $. 
Thus, $ K$ is closed under concatenation. 
\end{proof}

\subsection{Suffix Relation }
\label{SuffixRelationSection}

In this section,  we   show that we can extend  $ \mathsf{ID}^{ (2) }  $ to a theory  
where we have  an axiom schema for the suffix relation, analogues to $ \mathsf{ID}_4 $, 
without changing the interpretability degree. 
We  extend the language of  $ \mathsf{ID}^{ (2) } $ with a fresh binary relation symbol  $ \preceq_{ \mathsf{suff} } $. 
Given a nonempty binary string $ \alpha $, let   $ \mathsf{Suff} ( \alpha )  $ denote the set of all nonempty suffixes of $ \alpha $: 
$ \gamma \in \mathsf{Suff} ( \alpha )   $ if and only if  $ \alpha = \gamma $ or 
$ \exists  \delta  \in \lbrace \boldsymbol{0}, \boldsymbol{1}  \rbrace^{ + }   \;   [    \ 
   \alpha = \delta \gamma   \;  \wedge  \;     \gamma      \in \lbrace \boldsymbol{0}, \boldsymbol{1}  \rbrace^{ + }       \      ]   $.
Let  $ \mathsf{ID}^{ (3)  }  $ be  $ \mathsf{ID}^{ (2)  }$ extended with the following axiom schema 
\[
 \forall x     \;        [  \  x   \preceq_{ \mathsf{suff} }       \overline{\alpha}     \leftrightarrow  
 \bigvee_{ \gamma \in   \mathsf{Suff} ( \alpha )    }  x=  \overline{ \gamma} \   ] 
 \            .
\]

\begin{lemma}   \label{SuffixLemma}

$ \mathsf{ID} $ and  $ \mathsf{ID}^{ (3) } $ are mutually interpretable.

\end{lemma}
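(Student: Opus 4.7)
Since $\mathsf{ID}^{(3)}$ extends $\mathsf{ID}^{(2)}$, the interpretation of $\mathsf{ID}$ in $\mathsf{ID}^{(3)}$ is automatic from Lemma~\ref{AtomsLemma}, so the plan concentrates on the reverse direction: interpret $\mathsf{ID}^{(3)}$ inside $\mathsf{ID}^{(2)}$ and compose with Lemma~\ref{AtomsLemma}. The translation fixes $0, 1, \circ, \preceq$, relativises all quantifiers to a first-order definable domain $K$ to be chosen, and translates the fresh symbol via
\[
x \preceq_{\mathsf{suff}} y \ \equiv \ x = y \;\vee\; \exists z \; [\ zx = y \ ]
\]
with $z$ bound to $K$.

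The $(\Leftarrow)$ direction of each instance of the suffix schema is immediate as soon as $K$ contains every biteral: for $\gamma \in \mathsf{Suff}(\alpha)$ either $\gamma = \alpha$, or $\alpha = \delta \gamma$ for a unique nonempty prefix $\delta$, and the biteral $\overline{\delta} \in K$ witnesses $\overline{\gamma} \preceq_{\mathsf{suff}} \overline{\alpha}$. The substantive direction is $(\Rightarrow)$: for each fixed $\alpha$, show in $\mathsf{ID}^{(2)}$ that $zx = \overline{\alpha}$ with $z, x \in K$ forces $x = \overline{\gamma}$ for some $\gamma \in \mathsf{Suff}(\alpha)$. I would prove this by meta-induction on $|\alpha|$. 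The base case $|\alpha| = 1$ is closed because $zx \in \lbrace 0, 1 \rbrace$ would contradict $\mathsf{AT0}, \mathsf{AT1}$ inside $K$. For the induction step on $\alpha = \beta b$, one uses $x \in K \subseteq K_1$ to split into either $x = c \in \lbrace 0, 1 \rbrace$, where $\mathsf{ID}_3$ forces $c = b$ and $\mathsf{ID}_2$ gives $z = \overline{\beta}$, or $x = yc$, where the same two axioms yield $c = b$ and $zy = \overline{\beta}$, after which the inductive hypothesis applied to $\beta$ identifies $y$ as some biteral $\overline{\gamma'}$ with $\gamma' \in \mathsf{Suff}(\beta)$, so that $x = \overline{\gamma' b}$ is a canonical suffix of $\alpha$.

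The main obstacle is that the second case of the inductive step requires the witness $y$ in the decomposition $x = yc$ to itself lie in $K$, whereas $x \in K$ a priori only produces $y$ in the ambient model. I would overcome this by tightening the Atoms Lemma domain so that $K$ additionally satisfies the last-character-peeling property: $x \in K$ and $x = yc$ with $c \in \lbrace 0, 1 \rbrace$ imply $y \in K$. The construction follows the same two-step template as Lemma~\ref{AtomsLemma}: starting from the Atoms Lemma domain, first pass to the subclass of elements whose unique last-character decomposition $x = yc$ (unique because $\mathsf{ID}_2, \mathsf{ID}_3$ together with the atomicity of $0, 1$ pin down both $c$ and $y$) has $y$ inside the preceding stage, and then restore closure under concatenation by the $\lbrace w : \forall z \in (\cdot) \ zw \in (\cdot) \rbrace$ device used in that lemma. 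Verifying that the resulting $K$ contains $0, 1$, is closed under concatenation, inherits atomicity, and now also has the init-closure property is a direct adaptation of the argument in Lemma~\ref{AtomsLemma}; with such a $K$ the meta-induction above goes through for every $\alpha$, and the translations of $\mathsf{ID}_1$--$\mathsf{ID}_4, \mathsf{AT0}, \mathsf{AT1}$ inside $K$ are inherited from the Atoms Lemma construction, completing the interpretation.
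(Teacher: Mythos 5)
Your high-level plan — relativize to a domain $K$ and translate $\preceq_{\mathsf{suff}}$ by the naive formula $x = y \vee \exists z\,[zx = y]$ — has a genuine gap exactly at the ``last-character-peeling'' property that you correctly identify as the crux. The two-stage construction you propose (take the subclass of the Atoms-Lemma domain whose predecessor lies in the Atoms-Lemma domain, then restore closure under concatenation via the $\{w : \forall z\ zw \in (\cdot)\}$ device) yields only one level of peeling: if $x$ lies in your final $K$ and $x = yc$, you get $y$ in the Atoms-Lemma domain, not in the intermediate stage, and certainly not in $K$ itself. No finite iteration of this template produces a class that is closed under removing the last letter, because each stage only constrains the first predecessor. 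So the inductive hypothesis in your $(\Rightarrow)$ argument cannot be applied to $y$, and the meta-induction breaks at depth $2$.

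The paper sidesteps this entirely by not relativizing at all. It defines $x \preceq_{\mathsf{suff}} y$ as the conjunction of (1) $y = x \vee \exists u\,[y = ux]$ with two extra conditions on $x$ that are stated in terms of the original prefix symbol $\preceq$: (2) every $u \preceq x$ is $0$, $1$, or of the form $v0, v1$ with $v \preceq u$, and (3) $\preceq$ is reflexive and transitive on $I_x = \{z : z \preceq x\}$, with $x \in I_x$ and $I_x$ downward $\preceq$-closed. The payoff is that these conditions are self-propagating: if $x$ satisfies (2)--(3) and $x = wc$ with $w \preceq x$ (such a $w$ is supplied by (2) applied to $u = x$, and it agrees with your unique predecessor by $\mathsf{ID}_2$, $\mathsf{ID}_3$), then $w$ also satisfies (2)--(3), using transitivity and downward closure of $I_x$. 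That is exactly the iterated-peeling behaviour your $K$ fails to have, obtained without any domain restriction and hence without the additional burden of checking closure under concatenation. If you want to keep the domain-relativization framing, you would have to build (2)--(3) into the definition of $K$, at which point you re-encounter the paper's argument and additionally have to verify that such a $K$ is a domain — which the paper avoids by leaving the universe alone.
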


\begin{proof}

Since $ \mathsf{ID}^{ (3) } $ is an extension of  $ \mathsf{ID} $, it suffices by  Lemma \ref{AtomsLemma} 
 to show that the suffix relation is  definable in  $ \mathsf{ID}^{ (2) } $.
We translate the suffix relation as follows: 
$ x    \preceq_{ \mathsf{suff} }       y $ if and only   if 
\begin{itemize}

\item[(1)]  $  y = x   \;  \vee  \;  \exists u    \;   [   \   y =  ux    \   ]    $

\item[(2)]   $   \forall u \preceq  x  \;      \big[   \   
u = 0  \;  \vee   \;  u = 1     \;  \vee   \;    \exists v \preceq u   \;  [    \    u = v 0   \;  \vee   \;    u = v 1   \   ]    
    \         \big]      $

\item[(3)]  $ \preceq $ is reflexive and transitive on the class  $ I_x = \lbrace z :    \   z \preceq x  \rbrace $,  $   \     x \in I_x  $
and   $ \forall z \in I_x   \;   \forall w \preceq z   \;    [    \     w \in I_x   \     ]     $.

\end{itemize}

Given a nonempty binary string $ \alpha $,   we need to show that 
\[
 \mathsf{ID}^{ (2) }    \vdash  
 \forall x   \;   [    \ 
  x   \preceq_{ \mathsf{suff} }        \overline{\alpha}     \leftrightarrow  
 \bigvee_{ \gamma \in   \mathsf{Suff} ( \alpha )    }  x=  \overline{ \gamma} 
 \      ] 
 \                 .
\]

\paragraph{$(   \Leftarrow   )$ }

We  show that 
\[
 \mathsf{ID}^{ (2) }    \vdash  
 \forall x  \;   [      \     
\big(     \        \bigvee_{ \gamma \in   \mathsf{Suff} ( \alpha )    }  x=  \overline{ \gamma}          \         \big)   
 \rightarrow   
 x   \preceq_{ \mathsf{suff} }        \overline{\alpha}    
  \      ] 
   \                 .
\]

Let   $ \gamma \in   \mathsf{Suff} ( \alpha )   $. 
We need to show that $  \overline{ \gamma }    \preceq_{ \mathsf{suff} }       \overline{ \alpha } $   holds. 
That is, we need to show that $  \overline{ \gamma }  $ and   $   \overline{ \alpha } $ satisfy  (1)-(3). 
It is easy to prove by induction on the length of binary strings  that 
\[
 \mathsf{ID} \vdash \overline{ \delta }    \,   \;    \overline{ \zeta }   =   \overline{  \delta  \zeta  }   
 \       \     
 \mbox{ for  all   }       \delta , \zeta \in    \lbrace \boldsymbol{0}, \boldsymbol{1}  \rbrace^{ + }  
   \         .
   \tag{*}
 \]
 By (*), $ \overline{ \alpha  }   =   \overline{ \gamma } $ or  $ \overline{ \alpha  }   = \overline{ \delta }  \;     \overline{ \gamma } $ 
 where $ \delta $ is a prefix of $ \alpha $. 
 Hence, (1) holds. 
By (*)  and  the axiom schema  $ \mathsf{ID}_4 $ for the prefix  relation,    $ \overline{ \gamma }  $ satisfies (2)-(3). 
Thus,  $  \overline{ \gamma }    \preceq_{ \mathsf{suff} }     \overline{ \alpha } $   holds.

\paragraph{$(   \Rightarrow  )$ }

We need to show that 
\[
 \mathsf{ID}^{ (2) }    \vdash  
 \forall x   \;   [    \ 
  x   \preceq_{ \mathsf{suff} }        \overline{\alpha}      \rightarrow  
 \bigvee_{ \gamma \in   \mathsf{Suff} ( \alpha )    }  x=  \overline{ \gamma} 
 \      ] 
  \                 .
 \tag{**}
\]

We prove (**) by induction on the length of $ \alpha $. 
Assume $ \alpha \in  \lbrace \boldsymbol{0}, \boldsymbol{1}  \rbrace $ and 
$ x   \preceq_{ \mathsf{suff} }        \overline{\alpha}     $ holds. 
By (1), $ x =   \overline{\alpha}   $ or there exist $ u  $ such that $ \overline{\alpha}   = u x $. 
By $ \mathsf{AT0} $ and $ \mathsf{AT1} $,   we have $ x =   \overline{\alpha}    $. 
Thus,   (**) holds when   $ \alpha \in    \lbrace \boldsymbol{0}, \boldsymbol{1}  \rbrace   $.

We consider the inductive case. 
Assume  $  \alpha = \beta a $ where $ a \in   \lbrace \boldsymbol{0}, \boldsymbol{1}  \rbrace $,   
$  \beta  \in  \lbrace \boldsymbol{0}, \boldsymbol{1}  \rbrace^{ + }   $    and  
\[
 \mathsf{ID}^{ (2) }    \vdash  
 \forall x   \;   [    \ 
  x   \preceq_{ \mathsf{suff} }        \overline{\beta }      \rightarrow  
 \bigvee_{ \gamma \in   \mathsf{Suff} ( \beta  )    }  x=  \overline{ \gamma} 
 \      ] 
  \                 .
 \tag{***}
\]
By definition,  $  \overline{ \alpha } =  \overline{ \beta  a }  =  \overline{ \beta  }   \,  \overline{  a  }   $.
Assume $ x  \preceq_{ \mathsf{suff} }        \overline{\alpha}   $ holds. 
By (1),  $ x =   \overline{\alpha}   $ or there exist $ u  $ such that $  \overline{\alpha}    = u x $. 
If $ x =   \overline{\alpha}  $, we are done. 
So, assume   $  \overline{\alpha}    = u x $. 
By (3), we have $ x \preceq x $. 
Then, by (2),  we have one of the following cases: 
(i)   there exists $ b \in \lbrace 0, 1 \rbrace $ such that $ b = x $, 
(ii)  there exist $ w  \preceq  x $ and $ c  \in \lbrace 0, 1 \rbrace  $   such that $ x  = w c $. 
Assume (i) holds.  
We have $  \overline{ \beta  }  \,  \overline{  a  }   =    \overline{\alpha}   = ux = u b  $. 
By  $ \mathsf{ID}_3 $, we have $ \,  \overline{  a  }   = b = x  $. 
Thus,  $  x=  \overline{ \gamma}  $  where $  \gamma \in   \mathsf{Suff} ( \alpha )     $.

Assume  (ii) holds. 
Then,  $  \overline{ \beta  }    \,  \overline{  a  }   =    \overline{\alpha}   = ux = u w c $.  
By  $ \mathsf{ID}_3 $, we have $ \overline{  a  }  = c $. 
By $ \mathsf{ID}_2 $, we have  $  \overline{ \beta  }  = u w $. 
Furthermore 
\begin{itemize}

\item[-]   $   \forall u \preceq  w   \;      \big[   \   
u = 0  \;  \vee   \;  u = 1     \;  \vee   \;    \exists v \preceq u   \;  [    \    u = v 0   \;  \vee   \;    u = v 1   \   ]    
    \         \big]      $
    since $ u \preceq w  \;   \wedge   \;   w  \preceq x $ implies   $ u \preceq x $ by (3)

\item[-]   since $ w \preceq x $ and (3) holds,     $ \preceq $ is reflexive and transitive on  the class 
$ I_w = \lbrace z :    \   z \preceq w  \rbrace $,  $   \     w \in I_w  $
and    $ \forall z \in I_w   \;   \forall w \preceq z   \;    [    \     w \in I_w   \     ]     $.

\end{itemize}
Thus,  $ w  \preceq_{ \mathsf{suff} }        \overline{\beta }     $ holds. 
By (***), $ w = \overline{ \delta } $ where $ \delta $ is a suffix of $ \beta $. 
Then, $ x = w  \overline{  a  }   =   \overline{ \delta }  \,   \overline{  a  }  =   \overline{ \delta  a}   $
 and   $\delta  a $  is a suffix of $ \alpha $. 
Thus,  $ \alpha $ satisfies (**).

Thus, by induction, (**) holds for all nonempty binary strings $ \alpha $.  
\end{proof}

\subsection{Substring Relation }
\label{SubstringRelationSection}

In this section,     we   show that we can extend  $ \mathsf{ID}^{ (3) }  $ to a theory  
where we have  an axiom schema for the substring relation, analogues to $ \mathsf{ID}_4 $, 
without changing the interpretability degree. 
We extend the language of  $ \mathsf{ID}^{ (3) } $ with a fresh binary relation symbol  $ \preceq_{ \mathsf{sub} } $. 
Given a nonempty binary string $ \alpha $, 
let   $ \mathsf{Sub} ( \alpha )  $ denote the set of all nonempty substrings  of $ \alpha $: 
$ \beta \in  \mathsf{Sub} ( \alpha )  $ if and only if 
$ \alpha  = \beta $ or there exist $ \gamma , \delta   \in \lbrace \boldsymbol{0}, \boldsymbol{1}  \rbrace^{ + }   $ 
such that 
$ \beta \in  \lbrace \boldsymbol{0}, \boldsymbol{1}  \rbrace^{ + }   $ and 
$ \alpha  = \gamma \beta     \;  \vee  \;     \alpha  =  \beta \delta     \;  \vee  \;     \alpha  = \gamma  \beta  \delta $.
Let  $ \mathsf{ID}^{ (4)  }  $ be  $ \mathsf{ID}^{ (3)  }$ extended with the following axiom schema 
\[
 \forall x    \;     [           \  x   \preceq_{ \mathsf{sub} }        \overline{\alpha}     \leftrightarrow  
 \bigvee_{ \gamma \in   \mathsf{Sub} ( \alpha )    }  x=  \overline{ \gamma}        \             ] 
 \          .
\]

\begin{lemma}   \label{SubstringLemma}

$ \mathsf{ID} $ and  $ \mathsf{ID}^{ (4) } $ are mutually interpretable.

\end{lemma}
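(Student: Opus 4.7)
The plan is to mimic the structure of the proof of Lemma \ref{SuffixLemma}: since $\mathsf{ID}^{(4)}$ is a definitional extension of $\mathsf{ID}^{(3)}$ in the language, it suffices to exhibit a translation of $\preceq_{\mathsf{sub}}$ into the language of $\mathsf{ID}^{(3)}$ for which every instance of the new axiom schema becomes provable. Crucially, by Lemma \ref{SuffixLemma} we already have a full axiom schema for $\preceq_{\mathsf{suff}}$ at our disposal, so the decomposability bookkeeping of the form (1)--(3) that appeared in the proof of Lemma \ref{SuffixLemma} is already behind us and need not be redone.

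The key combinatorial observation is that a nonempty string $\gamma$ lies in $\mathsf{Sub}(\alpha)$ if and only if $\gamma$ is a nonempty suffix of some nonempty prefix of $\alpha$. This suggests the one-line translation
\[
x \preceq_{\mathsf{sub}} y \;\equiv\; \exists w \;[\, w \preceq y \;\wedge\; x \preceq_{\mathsf{suff}} w \,].
\]
My task is then to show, for each nonempty binary string $\alpha$, that
\[
\mathsf{ID}^{(3)} \vdash \forall x \;[\, x \preceq_{\mathsf{sub}} \overline{\alpha} \leftrightarrow \bigvee_{\gamma \in \mathsf{Sub}(\alpha)} x = \overline{\gamma} \,].
\]

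For the $(\Leftarrow)$ direction, given $\gamma \in \mathsf{Sub}(\alpha)$, I would pick a nonempty prefix $\beta$ of $\alpha$ of which $\gamma$ is a nonempty suffix, take $w = \overline{\beta}$ as the existential witness, and conclude by combining $\mathsf{ID}_4$ (which gives $\overline{\beta} \preceq \overline{\alpha}$) with the instance of the suffix axiom schema at $\overline{\beta}$ (which gives $\overline{\gamma} \preceq_{\mathsf{suff}} \overline{\beta}$). The $(\Rightarrow)$ direction is driven by a cascade of the two schemas: from $w \preceq \overline{\alpha}$ and $\mathsf{ID}_4$, a finite case analysis forces $w = \overline{\beta}$ for some nonempty prefix $\beta$ of $\alpha$; once this identity is in hand, $x \preceq_{\mathsf{suff}} \overline{\beta}$ together with the instance of the suffix axiom schema at $\overline{\beta}$ forces $x = \overline{\gamma}$ for some nonempty suffix $\gamma$ of $\beta$; and any such $\gamma$ belongs to $\mathsf{Sub}(\alpha)$.

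There is no genuine obstacle once the translation above is identified: the argument reduces to finite case analysis over two nested disjunctions, and the two nontrivial ingredients it relies on, namely the atomicity of $0$ and $1$ (used implicitly via the suffix schema) and the axiomatization of $\preceq_{\mathsf{suff}}$, were already secured in Lemmas \ref{AtomsLemma} and \ref{SuffixLemma}. Thus the ``hard part'' of this lemma is really the corresponding hard part of Lemma \ref{SuffixLemma}, inherited through composition of translations.
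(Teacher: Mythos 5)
Your proposal is correct and takes essentially the same approach as the paper: both reduce to Lemma \ref{SuffixLemma} and translate the substring relation via the observation that a substring of $\alpha$ is a suffix of a prefix of $\alpha$. The paper's translation is $x \preceq y \;\vee\; x \preceq_{\mathsf{suff}} y \;\vee\; \exists u \preceq y\,[\,x \preceq_{\mathsf{suff}} u\,]$, while yours keeps only the third disjunct; since $\mathsf{ID}_4$ makes $\overline{\alpha} \preceq \overline{\alpha}$ and the suffix schema makes $\overline{\beta} \preceq_{\mathsf{suff}} \overline{\beta}$, the first two disjuncts are redundant on biterals, so this is a harmless (and slightly tidier) simplification.
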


\begin{proof}

By Lemma \ref{SuffixLemma}, 
it suffices  to show that the substring  relation is  definable in  $ \mathsf{ID}^{ (3) } $.
We translate the substring  relation as follows
\[
 x  \preceq_{ \mathsf{sub} }     y 
 \equiv   \   
 x \preceq  y     \;  \vee  \;    x \preceq_{ \mathsf{suff} }    y   \;  \vee  \; 
  \exists u \preceq y    \;     [         \    x    \preceq_{ \mathsf{suff} }     u     \             ]      
\       .
\]
By the axiom schema for the prefix relation and the axiom schema for the suffix relation, 
it is easy to see that  $  \mathsf{ID}^{ (3) } $  proves 
$ \forall x   \;   [    \ 
  x      \preceq_{ \mathsf{sub} }         \overline{\alpha}     \leftrightarrow  
 \bigvee_{ \gamma \in   \mathsf{Sub} ( \alpha )    }  x=  \overline{ \gamma} 
 \      ]  
 $ 
 for each  nonempty binary string  $ \alpha  $.
\end{proof}

\subsection{Suffix Relation II }
\label{SuffixRelationIISection}

We are finally ready to equip   the suffix relation with two very important properties. 
Let $ \mathsf{ID}^{ (5) } $ be   $ \mathsf{ID}^{ (4) } $ extended with the following axioms
\[
\forall x  \;  [    \    \bigwedge_{ a \in \lbrace 0, 1 \rbrace }     a  \preceq_{ \mathsf{suff} }   xa     \      ]  
,     \      \          \        \   
\forall x  y  \;  [    \    x    \preceq_{ \mathsf{suff} }  y   \rightarrow  
   \bigwedge_{ a \in \lbrace 0, 1 \rbrace }     xa   \preceq_{ \mathsf{suff} }   ya     \      ]  
   \                  .
\]
To show that  $ \mathsf{ID}^{ (5) } $ and $ \mathsf{ID} $ are mutually interpretable,   we need the following lemma. 
Recall that   a class  is  a formula with at least one free variable  and that  if  $ I $ is a class with one free variable  we occasionally  write 
$ x \in I $ for $ I(x) $.

\begin{lemma}  \label{TheClassJ}

There exists  a class $J$ with the following properties: 
\begin{itemize}

\item[\textup{(1)}] $  \;   \mathsf{ID}^{ (4) }   \vdash     t \in J        \       $   for each variable-free term $t$

\item[\textup{(2)}]   $  \mathsf{ID}^{ (4) }   \vdash   \forall x  \;  \forall z  \in J   \;    \big [        \  
      \bigwedge_{ a \in \lbrace 0,  1 \rbrace }   
 (       \    z = xa  \rightarrow    a   \preceq_{ \mathsf{suff} }   z    \    )       \     \big]    $

\item[\textup{(3)}]   $  \mathsf{ID}^{ (4) }   \vdash   \forall x y  \;   \forall z  \in J   \;      \big [        \    
    \bigwedge_{ a \in \lbrace 0,  1 \rbrace }   
    \big(     \    
(     \      z = ya \;   \wedge   \;     x    \preceq_{ \mathsf{suff} }   y     \     )  
      \rightarrow   
(      \            xa   \preceq_{ \mathsf{suff} }   z      \     )  
\       \big) 
   \big]    $

\item[\textup{(4)}]   $  \mathsf{ID}^{ (4) }   \vdash    \forall z  \in J   \;      \big [        \    
z = 0  \;   \vee  \;  z = 1  \;   \vee  \;    \exists u   \preceq_{ \mathsf{sub} }  z    \;   [        \   
z = u0      \;   \vee  \;    z = u 1     \          ]     
 \          \big]    $

\item[\textup{(5)}]   $  \mathsf{ID}^{ (4) }   \vdash    \forall z  \in J   \;     \forall u \;      \big [        \    
u   \preceq_{ \mathsf{sub} }  z   \rightarrow   u  \in  J     
 \          \big]    $.
\end{itemize}

\end{lemma}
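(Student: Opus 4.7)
Write $P_i(u)$ for the formula expressing condition $(i)$ of the lemma, for $i = 2, 3, 4$. Following the refinement pattern of Lemmas \ref{AtomsLemma}--\ref{SubstringLemma}, I would define $J$ as a first-order class capturing the property that $z$ (together with its iterated substrings) behaves structurally like a variable-free term. A natural formulation is
\[
J(z) \equiv \bigwedge_{i=2}^{4} P_i(z) \;\wedge\; \forall u \preceq_{\mathsf{sub}} z \bigl[\, \bigwedge_{i=2}^{4} P_i(u) \,\bigr] \;\wedge\; T(z),
\]
where $T(z)$ is a downward-transitivity clause for $\preceq_{\mathsf{sub}}$ strong enough that (5) follows. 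Properties (2)--(4) are then immediate from the first three conjuncts, while (5) is the reason for including the last two: given $z \in J$ and $u \preceq_{\mathsf{sub}} z$, the fourth conjunct yields each $P_i(u)$, and $T$ is arranged so the closure data transfers to give $J(u)$ itself. Formulating $T$ correctly is the main design challenge, since the closure condition must iterate within a first-order theory; one resolution is to include clauses enforcing $\preceq_{\mathsf{sub}}$ to be transitive at several levels among the $\preceq_{\mathsf{sub}}$-predecessors of $z$.

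For (1), I would prove $\mathsf{ID}^{(4)} \vdash \overline{\alpha} \in J$ by induction on the length of a nonempty binary string $\alpha$, using the identity $\overline{\delta}\,\overline{\zeta} = \overline{\delta\zeta}$, the atomicity axioms $\mathsf{AT0}, \mathsf{AT1}$, cancellation $\mathsf{ID}_2$, disjointness $\mathsf{ID}_3$, Lemma \ref{SuffixLemma}, and the substring axiom schema. $P_4(\overline{\alpha})$ is direct from the recursive construction of biterals and the substring schema, and the transitivity data is immediate since a substring of a substring of $\alpha$ is a substring of $\alpha$. For $P_2(\overline{\alpha})$: if $\overline{\alpha} = xa$, then $\mathsf{AT0}, \mathsf{AT1}$ exclude $\alpha$ being a single letter; writing $\alpha = \beta b$, the equation $\overline{\beta}\,b = x\,a$ forces $b = a$ (by $\mathsf{ID}_3$) and $x = \overline{\beta}$ (by $\mathsf{ID}_2$), so $a$ is the last letter of $\alpha$ and the suffix axiom schema yields $a \preceq_{\mathsf{suff}} \overline{\alpha}$.

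The main obstacle is verifying $P_3(\overline{\alpha})$: if $\overline{\alpha} = y\,a$ and $x \preceq_{\mathsf{suff}} y$, the same extraction as for $P_2$ gives $y = \overline{\beta}$ with $\alpha = \beta a$; Lemma \ref{SuffixLemma} then provides $x = \overline{\gamma}$ for some suffix $\gamma$ of $\beta$, and the identity $\overline{\gamma}\,\overline{a} = \overline{\gamma a}$ combined with $\gamma a$ being a suffix of $\alpha$ yields $xa \preceq_{\mathsf{suff}} \overline{\alpha}$ via the suffix schema. This step chains atomicity, disjointness, cancellation, and the suffix characterization of Lemma \ref{SuffixLemma} into a single deduction, and its success rests on the letter-by-letter unique decomposition of variable-free terms that becomes available once $\mathsf{AT0}, \mathsf{AT1}$ are in the theory.
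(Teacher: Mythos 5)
Your definition of $J$ and your verification strategy match the paper's proof essentially exactly: the paper defines $J$ by the substring-quantified bodies of conditions (2)--(4) (its clauses (A)--(E), with $w$ ranging over $\preceq_{\mathsf{sub}}$-predecessors) together with the transitivity data you call $T(z)$, which it realizes concretely as reflexivity of $z$, reflexivity of every substring, and a one-step transitivity statement among the substring-predecessors (clauses (i)--(iii)). Your arguments for clauses (1) and (5), including the chain of $\mathsf{AT0}$, $\mathsf{AT1}$, $\mathsf{ID}_2$, $\mathsf{ID}_3$, and the suffix and substring axiom schemas, are the same ones the paper uses.
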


\begin{proof}

We define $J$ as follows: 
 $   u \in J  $ if and only if 
\begin{itemize}

\item[(i)]  $    u   \preceq_{ \mathsf{sub} }  u    $

      \vspace*{0.06cm}

\item[(ii)]  $  \forall w   \preceq_{ \mathsf{sub} }   u  \;    [   \   w    \preceq_{ \mathsf{sub} }     w    \   ]   $

      \vspace*{0.06cm}

\item[(iii)]    $    \forall w    \preceq_{ \mathsf{sub} }      u  \;    \forall v_0   \preceq_{ \mathsf{sub} }  w  \; 
  \forall v_1    \preceq_{ \mathsf{sub} }   v_0   \;  [   \    
 v_1    \preceq_{ \mathsf{sub} }    w    \   ]       \;   $

       \vspace*{0.06cm}

 \item[(A)]    $     \forall w    \preceq_{ \mathsf{sub} }    u   \;  [      \    
 w = 0  \;  \vee   \;   w = 1  \;  \vee   \;  
  \exists  v     \preceq_{ \mathsf{sub} }      w  \;     [    \    w = v0 \;  \vee   \;   w = v1  \    ]     
     \          ]       \;  $.

 \item[(B)]   $   \forall w    \preceq_{ \mathsf{sub} }      u  \;        \forall x    \;       [    \   
      w = x 0          \rightarrow     0   \preceq_{ \mathsf{suff} }   w         \      ]     
         $

      \vspace*{0.06cm}

 \item[(C)]   $  \forall w    \preceq_{ \mathsf{sub} }      u  \;        \forall x    \;       [    \   
      w = x 1          \rightarrow     1   \preceq_{ \mathsf{suff} }   w         \      ]     
         $

      \vspace*{0.06cm}

\item[(D)]   $   \forall w    \preceq_{ \mathsf{sub} }      u  \;        \forall x   y   \;      [    \   
 (       \         w = y 0      \;   \wedge   \;   x   \preceq_{ \mathsf{suff} }    y    \          )   
     \rightarrow      
      x0   \preceq_{ \mathsf{sub} }     w   
         \      ]   
         $

             \vspace*{0.06cm}

\item[(E)]   $    \forall w    \preceq_{ \mathsf{sub} }      u  \;        \forall x   y   \;      [    \   
 (       \         w = y 1      \;   \wedge   \;   x   \preceq_{ \mathsf{suff} }    y    \          )   
     \rightarrow      
      x1   \preceq_{ \mathsf{sub} }     w   
         \      ]   
        \;    $.

\end{itemize}

It follows straight from the definition that $J$ satisfies clauses (2)-(4). 
By the axiom schema for the substring relation,  the axiom schema for the suffix relation,  
$  \mathsf{AT0} $,  $  \mathsf{AT1} $,  $  \mathsf{ID}_2  $  and   $  \mathsf{ID}_3 $, 
$J$ satisfies Clause (1). 
It remains to show that $J$ also satisfies Clause (5). 
That is, we  need to show that $J$ is downward closed under    $  \preceq_{ \mathsf{sub} }   $. 
So, assume $ u^{ \prime }  \preceq_{ \mathsf{sub} }   u \in J $. 
We need to show that   $ u^{ \prime }   \in J $. 
That is, we  need to show that $ u^{ \prime }  $ satisfies (i)-(iii) and (A)-(E). 
We show that  $ u^{ \prime } $ satisfies (i). 
Since $u $ satisfies (ii),  $  u^{ \prime }   \preceq_{ \mathsf{sub} }   u   $ implies $ u^{ \prime }   \preceq_{ \mathsf{sub} }   u^{ \prime } $. 
Thus, $ u^{ \prime } $ satisfies (i).

We show that  $ u^{ \prime } $ satisfies (ii)-(iii) and (A)-(E).
Consider one of these clauses. 
It is of the form   $  \forall w  \preceq_{ \mathsf{sub} }      u^{ \prime }   \;   \phi (w)   $. 
We need to show that   $  \forall w  \preceq_{ \mathsf{sub} }      u^{ \prime }   \;   \phi (w)   $ holds. 
Since  $u \in J $, we know that  $ \forall  w  \preceq_{ \mathsf{sub} }      u   \;   \phi (w)  $ holds. 
Let $ w  \preceq_{ \mathsf{sub} }      u^{ \prime }   $. 
We need to show that  $ \phi (w )  $ holds.
Since  $ \forall  w  \preceq_{ \mathsf{sub} }      u   \;   \phi (w)  $ holds, 
 it suffices to show that  $ w  \preceq_{ \mathsf{sub} }   u $ holds. 
By assumption 
\[
 w  \preceq_{ \mathsf{sub} }    u^{ \prime }   \preceq_{ \mathsf{sub} }    u    
 \          .
 \]
 Since $u$ satisfies (i)
\[
 w  \preceq_{ \mathsf{sub} }    u^{ \prime }   \preceq_{ \mathsf{sub} }    u    \preceq_{ \mathsf{sub} }   u 
 \          .
 \]
 Then,   $ w     \preceq_{ \mathsf{sub} }     u $  since $u$ satisfies (iii). 
Hence,  $  \forall w  \preceq_{ \mathsf{sub} }      u^{ \prime }   \;   \phi (w)   $   holds. 
Thus,  $ u^{ \prime } $ satisfies clauses (ii)-(iii), (A)-(E).

Since $ u^{ \prime } $ satisfies (i)-(iii) and (A)-(E),    $ u^{ \prime } \in   J   $. 
Thus,    $J $ is downward closed under $  \preceq_{ \mathsf{sub} }     $. 
\end{proof}

\begin{lemma} \label{SecondSuffixLemma}

 $ \mathsf{ID} $ and   $ \mathsf{ID}^{ (5) } $   are mutually  interpretable.

\end{lemma}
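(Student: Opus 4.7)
By Lemma \ref{SubstringLemma}, $\mathsf{ID}$ and $\mathsf{ID}^{(4)}$ are mutually interpretable, so it suffices to interpret $\mathsf{ID}^{(5)}$ in $\mathsf{ID}^{(4)}$. The plan is to use a subclass of the class $J$ from Lemma \ref{TheClassJ} as the domain, interpreting every non-logical symbol of $\mathsf{ID}^{(5)}$ by itself. The class $J$ is tailor-made for the two new axioms of $\mathsf{ID}^{(5)}$: property (2) of Lemma \ref{TheClassJ} yields $a \preceq_{\mathsf{suff}} xa$ whenever $xa \in J$, and property (3) yields $xa \preceq_{\mathsf{suff}} ya$ whenever $x \preceq_{\mathsf{suff}} y$ and $ya \in J$. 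Property (1) ensures that all closed terms belong to $J$, so the axiom schema $\mathsf{ID}_4$ and the schemas for $\preceq_{\mathsf{suff}}$ and $\preceq_{\mathsf{sub}}$ carry over under relativization.

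To turn $J$ into a proper domain I would, mimicking the iteration pattern of Lemma \ref{AtomsLemma}, carve out a subclass $K \subseteq J$ that is closed under concatenation and contains $0$ and $1$. A natural first candidate is $K = \lbrace w \in J : \forall z \in J \, [ zw \in J ] \rbrace$. By associativity ($\mathsf{ID}_1$) this $K$ is closed under concatenation: for $w_1, w_2 \in K$ and $z \in J$ we have $z(w_1 w_2) = (zw_1) w_2 \in J$, since $zw_1 \in J$ and then $(zw_1) w_2 \in J$. Provided $0, 1 \in K$, the class $K$ contains every closed term (these being generated from $0, 1$ by concatenation), and the interpretation succeeds: the universal axioms $\mathsf{ID}_1$--$\mathsf{ID}_3$, $\mathsf{AT0}$, $\mathsf{AT1}$ hold trivially; the axiom schemas hold because $K$ contains the closed terms and these schemas already hold in $\mathsf{ID}^{(4)}$; and the two new axioms of $\mathsf{ID}^{(5)}$ relativised to $K$ follow immediately from properties (2) and (3) of $J$ applied with $z = xa$ (respectively $z = ya$), since $xa, ya \in K \subseteq J$ by closure.

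The main obstacle is verifying $0, 1 \in K$, which reduces to showing that $J$ is closed under right concatenation with $0$ and $1$: given $z \in J$, one must verify that $z \cdot a$ satisfies each of the defining clauses (i)--(iii) and (A)--(E) of $J$ from Lemma \ref{TheClassJ}. This is the delicate step, requiring an analysis of the substrings of $z \cdot a$, and I expect it to lean essentially on property (5) of $J$ (downward closure under $\preceq_{\mathsf{sub}}$) together with $\mathsf{AT0}$, $\mathsf{AT1}$, and the axiom schemas for $\preceq$ and $\preceq_{\mathsf{suff}}$. Should the direct verification not go through with $K$ as above, the fallback is to iterate the restriction further (as in the $K_1 \supseteq K_2 \supseteq K$ chain of Lemma \ref{AtomsLemma}) until a stable sub-class closed under concatenation and containing $0, 1$ is reached.
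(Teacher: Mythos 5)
Your proposal hinges on carving out a domain $K \subseteq J$ that contains $0$, $1$ and is closed under concatenation, and you correctly flag the key obstruction: showing that $J$ is closed under right concatenation with $0$ and $1$. That step does not go through. The axiom schema for $\preceq_{\mathsf{sub}}$ only constrains the relation on images of variable-free terms $\overline{\alpha}$, so for an arbitrary $z \in J$ nothing forces $z0$ to satisfy even clause (i) of the definition of $J$ (namely $z0 \preceq_{\mathsf{sub}} z0$), let alone the remaining clauses. Iterating the restriction further does not help: any domain for a relativization must still contain $0$, $1$ and be closed under concatenation, hence must contain $z0$ whenever it contains $z$, and the lack of control over $\preceq_{\mathsf{sub}}$ at such elements persists. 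So a relativization into $J$ is not available in $\mathsf{ID}^{(4)}$.

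The paper sidesteps this entirely: there is no domain restriction. It keeps the full universe and translates $\preceq_{\mathsf{suff}}$ conditionally, as
$x \preceq_{\mathsf{suff}}^{\tau} y \ \equiv \ (\, y \in J \ \wedge \ x \preceq_{\mathsf{suff}} y \,) \ \vee \ y \notin J$.
With this translation, the new axiom $a \preceq_{\mathsf{suff}} xa$ is trivially true when $xa \notin J$ and follows from clause (2) of Lemma \ref{TheClassJ} when $xa \in J$; the monotonicity axiom is trivially true when $ya \notin J$, and otherwise one uses clauses (4) and (5) to deduce $y \in J$, unwinds the translated hypothesis to obtain the untranslated $x \preceq_{\mathsf{suff}} y$, and applies clause (3). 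The translated suffix schema reduces to an instance of the untranslated one since $\overline{\alpha} \in J$ by clause (1). This conditional-translation trick, rather than a domain restriction, is the idea your proposal is missing.
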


\begin{proof}

By Lemma \ref{SubstringLemma}, 
it suffices to show that  $ \mathsf{ID}^{ (5) } $   is interpretable in $ \mathsf{ID}^{ ( 4 )  }  $. 
Let $J$ be the class given by Lemma  \ref{TheClassJ}.
To  interpret  $ \mathsf{ID}^{ (5) } $   in $ \mathsf{ID}^{ ( 4 )  }  $  it suffices to translate the suffix relation  as follows
\[
x \preceq_{ \mathsf{suff} }^{ \tau }  y   
\equiv   \   
(    \   y \in J   \;   \wedge   \;   x   \preceq_{ \mathsf{suff} }  y     \     ) 
\     \vee    \   
(    \  y \not\in J     \;   \wedge   \;    x =  x     \        ) 
\          .
\]

We need show that the translation of each instance of  the axiom schema for the suffix relation is a theorem of  $ \mathsf{ID}^{ ( 4 )  }  $.
Let $ \alpha $ be a nonempty binary string. 
We need to show that 
\[
 \forall x     \;        [  \  x   \preceq_{ \mathsf{suff} }^{ \tau }        \overline{\alpha}     \leftrightarrow  
 \bigvee_{ \gamma \in   \mathsf{Suff} ( \alpha )    }  x=  \overline{ \gamma} \   ] 
 \tag{A} 
\]
holds. 
By  Clause (1) of Lemma   \ref{TheClassJ},  $   \;     \overline{ \alpha } \in J $. 
Hence, by the definition of  $ \preceq_{ \mathsf{suff} }^{ \tau }   $,   (A)  holds if and only if 
\[
 \forall x     \;        [  \  x   \preceq_{ \mathsf{suff} }       \overline{\alpha}     \leftrightarrow  
 \bigvee_{ \gamma \in   \mathsf{Suff} ( \alpha )    }  x=  \overline{ \gamma} \   ] 
 \tag{B} 
\]
holds.
Observe that (B) is an instance of the axiom schema for the suffix relation. 
Thus, 
the translation of each instance of  the axiom schema for the suffix relation is a theorem of  $ \mathsf{ID}^{ ( 4 )  }  $.

We need to show that the translation of the axiom 
\[
\forall x  \;  [    \    \bigwedge_{ a \in \lbrace 0, 1 \rbrace }     a  \preceq_{ \mathsf{suff} }   xa     \      ]  
\tag{C}
\]
is a theorem of  $ \mathsf{ID}^{ ( 4 )  }  $. 
Let $x $ be arbitrary and let  $  a \in \lbrace 0, 1 \rbrace $. 
We need to show that $ a  \preceq_{ \mathsf{suff} }^{ \tau }  xa $ holds. 
Assume  $ xa \in J $.
 Then,    $ a  \preceq_{ \mathsf{suff} }^{ \tau }  xa $  holds if and only if $ a    \preceq_{ \mathsf{suff} }  xa $ holds.
By Clause (2)  of   Lemma   \ref{TheClassJ},    $ a    \preceq_{ \mathsf{suff} }    xa $ holds. 
Hence, $ a  \preceq_{ \mathsf{suff} }^{ \tau }  xa $ holds when  $ xa \in J $.
Assume now  $ xa   \not\in J $.
Then, $ a  \preceq_{ \mathsf{suff} }^{ \tau }  xa $  holds  by the second disjunct in the definition of  $  \preceq_{ \mathsf{suff} }^{ \tau }  $. 
Thus, the translation of  (C) is a theorem of     $\mathsf{ID}^{ ( 4 )  }  $.

We need to show that the translation of the axiom 
\[ 
\forall x  y  \;  [    \    x    \preceq_{ \mathsf{suff} }  y   \rightarrow  
   \bigwedge_{ a \in \lbrace 0, 1 \rbrace }     xa   \preceq_{ \mathsf{suff} }   ya     \      ]  
   \tag{D}
\]
is a theorem of  $ \mathsf{ID}^{ ( 4 )  }  $. 
Let   $ a \in \lbrace 0, 1 \rbrace  $  and  assume  $ x   \preceq_{ \mathsf{suff} }^{ \tau }   y $. 
We need to show that    $  xa   \preceq_{ \mathsf{suff} }^{ \tau }   ya    $  holds.
Assume first $ ya  \not\in J $. 
Then,   $  xa   \preceq_{ \mathsf{suff} }^{ \tau }   ya    $ holds  by the second disjunct in the definition of  
$  \preceq_{ \mathsf{suff} }^{ \tau }  $. 
Assume next $ ya \in J $. 
Then, by  Clause  (4)   of   Lemma   \ref{TheClassJ},   
 $ ya  \in \lbrace  0, 1 \rbrace $   or 
   there exist $ u \preceq_{ \mathsf{sub} } ya $  and $ b   \in \lbrace  0, 1 \rbrace $   such that  $ ya = u b $. 
By $ \mathsf{AT0} $,  $ \mathsf{AT1} $ and   $ \mathsf{ID}_3 $,    
we have  $ ya = u a $ where  $ u \preceq_{ \mathsf{sub} } ya $.
By  $ \mathsf{ID}_2 $,  we have $ y = u $. 
Hence, $  y  \preceq_{ \mathsf{sub} } ya $. 
By Clause  (5)   of   Lemma   \ref{TheClassJ},     $ y \in J $. 
Thus, since $ x   \preceq_{ \mathsf{suff} }^{ \tau }   y $ holds and $ y \in J $, 
we have  $ x   \preceq_{ \mathsf{suff} }  y $ by the definition of $  \preceq_{ \mathsf{suff} }^{ \tau } $. 
Then,  by  Clause  (3)   of   Lemma   \ref{TheClassJ},  $  xa   \preceq_{ \mathsf{suff} }   ya    $  holds.
Thus, the translation of  (D) is a theorem of     $\mathsf{ID}^{ ( 4 )  }  $. 
\end{proof}

\subsection{Interpretation of $ \mathsf{ID}^{ * }  $ in $ \mathsf{ID} $ }
\label{InterpretationOfIDBarinIDSection}

We are finally ready to show that  $ \mathsf{ID}^{ * }  $ and  $ \mathsf{ID} $ are mutually interpretable.

\begin{theorem}

The theories $ \mathsf{ID} $, $ \mathsf{ID}^{ * }  $ are mutually interpretable.

\end{theorem}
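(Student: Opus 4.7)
The plan has two parts. First, for the easier direction $\mathsf{ID} \leq \mathsf{ID}^{*}$, I would interpret the primitive prefix symbol $\preceq$ by $x \preceq y \equiv x = y \vee \exists z[y = xz]$ and verify each instance of the prefix schema $\mathsf{ID}_4$ from the substring schema $\mathsf{ID}_4^{*}$ together with $\mathsf{ID}_1$--$\mathsf{ID}_3$: if $x \preceq \overline{\alpha}$ and $x \neq \overline{\alpha}$, then $\overline{\alpha} = xz$ witnesses $x \sqsubseteq_{\mathsf{s}} \overline{\alpha}$, so by $\mathsf{ID}_4^{*}$ the element $x$ equals a numeral $\overline{\gamma}$ for some substring $\gamma$ of $\alpha$, and a metatheoretic inspection combined with iterated applications of $\mathsf{ID}_2, \mathsf{ID}_3$ eliminates the substrings that are not prefixes.

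For the substantive direction $\mathsf{ID}^{*} \leq \mathsf{ID}$, by Lemma \ref{SecondSuffixLemma} it suffices to interpret $\mathsf{ID}^{*}$ in $\mathsf{ID}^{(5)}$. I would introduce the domain
\[
M(y) \equiv (\forall x)[y \preceq_{\mathsf{suff}} xy] \wedge (\forall x z)[x \preceq_{\mathsf{suff}} z \rightarrow xy \preceq_{\mathsf{suff}} zy],
\]
label the two conjuncts \textup{(a)} and \textup{(b)}, and translate $0, 1, \circ, =$ by themselves. By the two new axioms of $\mathsf{ID}^{(5)}$, both $M(0)$ and $M(1)$ hold. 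Using associativity, one shows $M$ is closed under $\circ$: for \textup{(a)} of $yz$, combine \textup{(a)} for $y$ with \textup{(b)} for $z$; for \textup{(b)} of $yz$, apply \textup{(b)} for $y$ then \textup{(b)} for $z$. In particular, every numeral $\overline{\alpha}$ lies in $M$, and the translated numerals coincide with the original ones.

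The translations of $\mathsf{ID}_1, \mathsf{ID}_2, \mathsf{ID}_3$ are immediate since $\mathsf{ID}^{(5)}$ already proves them universally. The main work is verifying, for each nonempty binary string $\alpha$, the translation of $\mathsf{ID}_4^{*}$, i.e.\ that any $x \in M$ with $x = \overline{\alpha}$ or with $\overline{\alpha} = ux$, $\overline{\alpha} = xv$, or $\overline{\alpha} = uxv$ for some $u, v \in M$ must be a numeral of a substring of $\alpha$. The case $x = \overline{\alpha}$ is trivial. For $\overline{\alpha} = ux$, conjunct \textup{(a)} applied to $x$ yields $x \preceq_{\mathsf{suff}} \overline{\alpha}$, after which the suffix schema identifies $x$ with a suffix numeral. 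For $\overline{\alpha} = xv$, conjunct \textup{(a)} applied to $v$ gives $v = \overline{\delta}$ with $\delta$ a suffix of $\alpha$; the subcase $\delta = \alpha$ yields $x\overline{\alpha} = \overline{\alpha}$, which is refutable in $\mathsf{ID}^{(5)}$ by iterating $\mathsf{ID}_2$ until $\mathsf{AT0}$ or $\mathsf{AT1}$ kicks in, while in the subcase $\alpha = \alpha'\delta$ with $\alpha'$ nonempty, iterated $\mathsf{ID}_2$ cancels $\overline{\delta}$ from the right of $\overline{\alpha'}\,\overline{\delta} = x\overline{\delta}$ to give $x = \overline{\alpha'}$. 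The case $\overline{\alpha} = uxv$ reduces to the previous ones by first cancelling $v$ from the right.

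The main obstacle is designing a domain that is simultaneously closed under concatenation and strong enough to force the witnesses in the translated $\mathsf{ID}_4^{*}$ to be numerals; conjunct \textup{(b)} in the definition of $M$ is included precisely to make the closure argument go through inside $\mathsf{ID}^{(5)}$, and this is the whole reason for the incremental buildup from $\mathsf{ID}$ through $\mathsf{ID}^{(2)}$--$\mathsf{ID}^{(5)}$. A secondary bookkeeping issue is the right-cancellation of a fixed numeral $\overline{\delta}$, which in the formal theory is just an iteration of $\mathsf{ID}_2$ and $\mathsf{ID}_3$ carried out metatheoretically over the fixed length of $\delta$.
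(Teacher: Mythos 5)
Your proof is correct and follows essentially the same route as the paper's: for $\mathsf{ID} \le \mathsf{ID}^{*}$ you use the same translation of $\preceq$ and the same finite case analysis from $\mathsf{ID}_4^{*}$, and for $\mathsf{ID}^{*} \le \mathsf{ID}$ you pass through Lemma~\ref{SecondSuffixLemma} and verify the translated $\mathsf{ID}_4^{*}$ by the same suffix-then-cancellation argument. The only difference is cosmetic: the paper obtains its domain $K$ in two stages, first taking $K_1 = \lbrace u : \forall x\,[u \preceq_{\mathsf{suff}} xu] \rbrace$ and then restricting to those $u$ that send $K_1$ into itself under left multiplication, whereas you bake closure under concatenation directly into the domain by adding the conjunct $\forall x z\,[x \preceq_{\mathsf{suff}} z \rightarrow xy \preceq_{\mathsf{suff}} zy]$; both yield a class containing $0,1$ and closed under $\circ$, and the key step (conjunct (a) forces the right factor to be a suffix numeral) is identical.
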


\begin{proof}

To interpret  $ \mathsf{ID} $   in  $ \mathsf{ID}^{ * } $, 
it suffices to translate $ \preceq $ as follows 
\[
x \preceq y   \equiv    \    y =x   \;   \vee   \;    \exists z  \;   [     \    y = xz    \   ]  
\         .
\]
Given a nonempty binary string $ \alpha $, 
we have 
\[
\begin{array}{l c l c l }
x  \preceq    \overline{ \alpha }  
& \Leftrightarrow
& \overline{ \alpha } = x    \vee \exists z   \;     [ \     \overline{ \alpha } = x z    \  ]  
\\
\\
& \Leftrightarrow
& \overline{ \alpha } = x     \;     \vee    \;   
\\
 && 
 \exists z  \;      [      \ 
 x \sqsubseteq_{ \mathsf{s} } \overline{ \alpha } 
 \;  \wedge    \;   
 z  \sqsubseteq_{ \mathsf{s} } \overline{ \alpha }  
 \;    \wedge     \;   
    \overline{ \alpha } = x z    \         ]  
  & 
  & ( \mbox{def. of  }   \sqsubseteq_{ \mathsf{s} }    \;   )
  \\
  \\
& \Leftrightarrow 
&  \overline{ \alpha } = x       \;     \vee    \;  
\\
 &&  \bigvee_{ \beta, \gamma  \in   \mathsf{Sub} ( \alpha )    }   
 (    \   x= \overline{ \beta } \wedge  z = \overline{ \gamma } \wedge  
  \overline{ \beta }  \   \overline{ \gamma } = \overline{ \alpha }   \    ) 
& 
&  (  \mathsf{ID}^{ * }_4   )
  \\
\\
& \Leftrightarrow
&  \bigvee_{ \beta   \in \mathsf{Pref} ( \alpha )   }  x = \overline{ \beta   } 
  & 
  & (  \mathsf{ID}_1    \! -   \!    \mathsf{ID}_3  )
\          .
\end{array}   
\]
This shows  that the translation of each instance of the axiom schema $ \mathsf{ID}_4 $ is a theorem of  $  \mathsf{ID}^{ * }   $.
Thus, $ \mathsf{ID} $ is interpretable in $ \mathsf{ID}^{ * }  $.

Next,   we  show  that   $ \mathsf{ID}^{ * }   $ is interpretable in $ \mathsf{ID} $. 
By Lemma \ref{SecondSuffixLemma}, 
it suffices to show that  $ \mathsf{ID}^{ * }    $ is interpretable in $ \mathsf{ID}^{ (5) } $.
Since the axioms of $ \mathsf{ID}^{ * }  $ are universal sentences or sentences where existential quantifiers occur in the antecedent
(instances of  $ \mathsf{ID}^{ * }_4$), 
to interpret   $\mathsf{ID}^{ * }  $ in  $ \mathsf{ID}^{ (5) } $  it suffices to relativize quantification to a suitable  domain   $K$.

We start by defining an auxiliary class $K_1 $
(this is why we extended  $ \mathsf{ID}^{ (4) } $ to $ \mathsf{ID}^{ (5) } $).
Let 
\[
K_1  = \lbrace u :    \      \forall x      \;   [      \     u  \preceq_{ \mathsf{suff}  }    xu      \      ]       \    \rbrace 
\         . 
\]
By the axiom  
$ \forall x  \;  [    \    \bigwedge_{ a \in \lbrace 0, 1 \rbrace }     a  \preceq_{ \mathsf{suff} }   xa     \      ]   $, 
we have $ 0, 1 \in J $. 
We show that   $K_1 $ is closed under the maps $ u \mapsto u0 $,   $  \;   u \mapsto u1  $.
Let $ b  \in \lbrace 0, 1 \rbrace $ and let $ u \in K_1 $. 
We need to show that $ ub  \in K_1 $. 
That is, we need to show that   $ ub  \preceq_{ \mathsf{suff}  }    xu b  $ for all $ x $. 
Since $ u \in K_1 $, we know that 
\[
\forall x      \;   [      \     u  \preceq_{ \mathsf{suff}  }    xu      \      ]    
\tag{*} 
\]
holds. 
Then, by (*) and the axiom 
\[
\forall x  y  \;  [    \    x    \preceq_{ \mathsf{suff} }  y   \rightarrow  
   \bigwedge_{ a \in \lbrace 0, 1 \rbrace }     xa   \preceq_{ \mathsf{suff} }   ya     \      ]  
\]
we have 
\[
\forall x      \;   [      \     ub   \preceq_{ \mathsf{suff}  }    xub       \      ]    
\       .
\]
Hence, $ ub  \in K_1 $. 
Thus,  $K_1 $ is closed under the maps $ u \mapsto u0 $,   $  \;   u \mapsto u1  $.

The class $K_1 $ is not a domain since it may not be  closed under concatenation. 
We let 
\[
K = \lbrace u \in K_1  :    \     \forall v \in K_1   \;   [    \      vu   \in K_1   \    ]        \      \rbrace
\           .
\]
Since $ K_1 $ contains $0$ and $1$ and is closed under the maps  $ x \mapsto x0 $,   $  \;   x \mapsto x1  $, 
we have  $ 0,1 \in K $. 
We show that $K$ is closed under concatenation. 
Let $ u_0, u_1 \in K$. 
We need to show that $ u_0 u_1 \in K$. 
We start by showing that $ u_0 u_1 \in K_1 $. 
We have $ u_0 \in K \subseteq K_1$.
Hence,  $ u_0 u_1 \in K_1 $ since $ u_1 \in K $. 
Next, we need to show that $ \forall v \in K_1 [   \    v u_0 u_1 \in K_1   \    ]    $.
We do not need to worry about parentheses since $ \mathsf{ID}_1 $ tells us that concatenation is associative. 
Let $ v \in K_1 $. 
We need to show that $  v u_0 u_1  \in K_1 $. 
Since $ u_0 \in K$, we have $ v u_0 \in K_1 $.
Since $ u_1 \in K$, we have $ v u_0  u_1  \in K_1 $.
Hence, $ u_0 u_1 \in K$. 
Thus, $ K$ is closed under concatenation and therefore satisfies the domain conditions.

Since the axioms $ \mathsf{ID}_1  $, $ \mathsf{ID}_2 $, $ \mathsf{ID}_3 $ are universal sentences, 
their restrictions to $K$ are theorems of  $ \mathsf{ID}^{ (5) } $. 
It remains to show  that  the restriction to $K$  of each instance of 
\[
\mathsf{ID}^{ * }_4   \equiv    \  
\forall x [  \  x \sqsubseteq_{ \mathsf{s} }  \overline{\alpha}  \rightarrow  
 \bigvee_{ \gamma \in   \mathsf{Sub}  ( \alpha )    }  x=  \overline{ \gamma} \   ] 
\]
is a theorem of $ \mathsf{ID}^{ (5) } $.
It suffices to show that  for each  nonempty binary string   $   \alpha  $
\[
\forall x  , y \in K    \big[    \    
  xy =   \overline{ \alpha    }    \     \rightarrow    \ 
\bigvee_{ \beta , \gamma  \in     \mathsf{Sub}  ( \alpha )   }      
(    \    x = \overline{ \gamma  }  \;   \wedge   \;   y =  \overline{  \beta  }  \    ) 
  \      \big]   
  \       .
  \tag{**} 
\]
So,  let $ x , y \in K$ and  assume $ x y =  \overline{ \alpha    }      $. 
Since $ y \in K \subseteq K_ 1 $,    
we know that  $ y   \preceq_{ \mathsf{suff} } xy =    \overline{ \alpha    }   $.
By the axiom schema for the suffix relation, $ y = \overline{\beta} $ where $ \beta $ is a nonempty suffix of $ \alpha $.
So, $ x  \overline{\beta}   = \overline{\alpha}    \;  $.
By  $ \mathsf{ID}_1 $,  $ \mathsf{ID}_2 $, $ \mathsf{ID}_3 $, $ \mathsf{AT0} $, $ \mathsf{AT1} $, 
we have that $ x  = \overline{ \gamma } $ where $ \gamma $ is a nonempty prefix of $ \alpha $ such that $ \alpha = \gamma \beta $.
Thus, (**) hols for all nonempty binary strings $ \alpha $.
Thus, the translation of each instance of  $ \mathsf{ID}^{ * }_4    $ is a theorem of   $ \mathsf{ID}^{ (5) } $.
\end{proof}

\subsection{The Theories  $  \overline{  \mathsf{ID}  } $,   $  \overline{  \mathsf{ID}  }^{ * }  $  }

The axioms  $  \mathsf{ID}_1 $, $  \mathsf{ID}_2 $, $  \mathsf{ID}_3 $ describe a right cancellative semigroup. 
It is also natural to consider semigroups that are also left cancellative, 
for example 
$  (   \lbrace \boldsymbol{0}, \boldsymbol{1}  \rbrace^{ + } , \boldsymbol{0}, \boldsymbol{1}, ^\frown )  $.
Let  $\overline{ \mathsf{ID}  }$ and  $\overline{ \mathsf{ID}   }^{*}   $  be  $ \mathsf{ID} $ and $ \mathsf{ID}^{ * }  $, 
respectively,   extended with the axioms 
\[
\forall x y \;   [    \   x \neq y \rightarrow    (   \    0x  \neq 0y   \;   \wedge   \;    1x  \neq 1y   \    )        \      ]   
,    \       \   
\forall x y \;   [    \   0 x \neq 1  y       \      ]   
\                .
\]
It is not difficult to see that $ \mathsf{TC} $ proves each axiom of  $\overline{ \mathsf{ID}   }^{*}   $.
It is easily seen that our interpretation of   $  \mathsf{ID}^{*}   $ in  $ \mathsf{ID}   $ 
is also an interpretation of $\overline{ \mathsf{ID}   }^{*}   $ in  $\overline{ \mathsf{ID}  }  $.
Thus,  $\overline{ \mathsf{ID}  }  $ and  $\overline{ \mathsf{ID}   }^{*}   $ are mutually interpretable. 
We have not been able to determine whether   $\overline{ \mathsf{ID}  }  $   is interpretable  in   $   \mathsf{ID}   $.

\begin{theorem}

$\overline{ \mathsf{ID}  }   $  and   $\overline{ \mathsf{ID}   }^{*}   $ are mutually interpretable. 

\end{theorem}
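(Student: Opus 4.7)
The plan is to establish both directions by observing that the mutual interpretation of $\mathsf{ID}$ and $\mathsf{ID}^*$ carries over verbatim to $\overline{\mathsf{ID}}$ and $\overline{\mathsf{ID}}^*$, since the two additional axioms are universal sentences in the original language of concatenation and therefore are preserved by both the domain-restrictions and the definitional extensions used in the earlier interpretations.

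For the direction $\overline{\mathsf{ID}} \leq \overline{\mathsf{ID}}^*$, I would re-use the translation that witnesses $\mathsf{ID} \leq \mathsf{ID}^*$: the domain is the whole universe, the symbols $0$, $1$ and $\circ$ are translated as themselves, and $x \preceq y$ is translated as $y = x \vee \exists z [y = xz]$. The translations of $\mathsf{ID}_1$, $\mathsf{ID}_2$, $\mathsf{ID}_3$ and $\mathsf{ID}_4$ are already theorems of $\mathsf{ID}^*$ by the chain of equivalences displayed in the proof of the main theorem of this section, hence also theorems of $\overline{\mathsf{ID}}^*$. The two extra left-cancellation-type axioms involve only $0$, $1$ and $\circ$, so they translate to themselves and are theorems of $\overline{\mathsf{ID}}^*$ by assumption.

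For the harder direction $\overline{\mathsf{ID}}^* \leq \overline{\mathsf{ID}}$, I would retrace the chain $\mathsf{ID} \cong \mathsf{ID}^{(2)} \cong \mathsf{ID}^{(3)} \cong \mathsf{ID}^{(4)} \cong \mathsf{ID}^{(5)}$ together with the final interpretation $\mathsf{ID}^* \leq \mathsf{ID}^{(5)}$, while keeping the two extra axioms attached to every theory in the chain. Every step in the original chain is of one of two kinds: either we extend the language with a definable binary relation (suffix or substring), leaving the universe unchanged and not touching the concatenation axioms at all; or we pass to a subclass $K$ that is shown to contain $0$ and $1$ and to be closed under concatenation. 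In the first kind of step, the new axioms do not mention the freshly defined relation, so they translate to themselves and remain provable. In the second kind of step, the new axioms are universal sentences in $\lbrace 0, 1, \circ \rbrace$, so their relativizations to $K$ follow directly from the unrelativized versions in the ambient theory. This applies in particular to the final domain $K \subseteq K_1$ used to interpret $\mathsf{ID}^*$ in $\mathsf{ID}^{(5)}$: since $K$ is closed under $\circ$ and contains $0, 1$, the relativized sentences $\forall x, y \in K \; [\, x \neq y \to (0x \neq 0y \wedge 1x \neq 1y) \,]$ and $\forall x, y \in K \; [\, 0x \neq 1y \,]$ are immediate consequences of the corresponding full axioms in $\overline{\mathsf{ID}}$.

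Because the preservation of the two extra axioms is uniform across all steps, no genuine obstacle arises; the argument is essentially a bookkeeping verification that universality of the added axioms is enough to carry them through every definitional extension and every domain restriction used in the interpretation of $\mathsf{ID}^*$ in $\mathsf{ID}$.
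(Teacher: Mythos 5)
Your proposal is correct and matches the paper's argument, which simply observes that the interpretation of $\mathsf{ID}^*$ in $\mathsf{ID}$ is also an interpretation of $\overline{\mathsf{ID}}^*$ in $\overline{\mathsf{ID}}$ because the two added axioms are universal $\{0,1,\circ\}$-sentences, hence preserved by every domain restriction in the chain $\mathsf{ID}^{(2)},\dots,\mathsf{ID}^{(5)}$ and untouched by the definitional extensions. Your retracing of the chain is just the bookkeeping the paper compresses into the phrase ``it is easily seen.''
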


\begin{open problem}

Is  $\overline{ \mathsf{ID}  }  $    interpretable  in   $   \mathsf{ID}   $?

\end{open problem}

\section{Mutual Interpretability   of $ \mathsf{IQ} $  and  $ \mathsf{IQ}^* $}
\label{MututalInterpretabilityOfIQandIQStar}

In this section, we show that  $ \mathsf{IQ}   $ and  $ \mathsf{IQ}^{ * }   $   are also  mutually interpretable.  
Recall that   $ \mathsf{IQ}^{ * }   $ is  the theory we obtain from  $ \mathsf{IQ} $ 
by removing $ \leq $ from the language and   replacing the axiom schema   $ \mathsf{IQ}_3  $ with the axiom schema 
\[
\mathsf{IQ}^{ * }_3   \equiv    \    
\forall x \;   \big[    \    x \leq_{  \mathsf{l}  }    \overline{n}  \rightarrow      \bigvee_{ k \leq n }  x = \overline{k}    \      \big]
\]
where   $ x \leq_{  \mathsf{l} }   y  \equiv     \           \exists  z \;   [      \   z + x = y   \       ]    $.

\begin{theorem}

$  \mathsf{IQ} $ and   $  \mathsf{IQ}^* $ are  mutually interpretable. 

\end{theorem}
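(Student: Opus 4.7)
The easy direction $\mathsf{IQ} \leq \mathsf{IQ}^*$ is obtained by translating $x \leq y$ as $x \leq_{\mathsf{l}} y$. The translation of $\mathsf{IQ}_3$ becomes $\forall x\,[\,x \leq_{\mathsf{l}} \overline{n} \leftrightarrow \bigvee_{k \leq n} x = \overline{k}\,]$. The forward implication is exactly $\mathsf{IQ}^*_3$; for the reverse, when $x = \overline{k}$ with $k \leq n$, a meta-induction on $k$ using $\mathsf{Q}_4$ and $\mathsf{Q}_5$ yields $\overline{n-k} + \overline{k} = \overline{n}$, supplying the required witness for $x \leq_{\mathsf{l}} \overline{n}$.

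For the hard direction $\mathsf{IQ}^* \leq \mathsf{IQ}$, I plan to mirror Section~\ref{MutualInterpretabilityOfIDandIDStar}. The familiar obstacle is that a model of $\mathsf{IQ}$ may contain nonstandard $x, z$ with $z + x = \overline{n}$, violating $\mathsf{IQ}^*_3$; so one must cut down to a definable subclass $K$. In analogy with Lemmas~\ref{AtomsLemma}--\ref{SecondSuffixLemma}, I would pass through a chain of interpretability-preserving extensions $\mathsf{IQ}^{(k)}$ of $\mathsf{IQ}$, each adding axioms about $\leq$ that correspond to the suffix-enrichment axioms used in the $\mathsf{ID}$ proof. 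The key additions are the schemas $\forall x\,[\,\overline{m} \leq x + \overline{m}\,]$ and $\forall x y\,[\,x \leq y \rightarrow x + \overline{k} \leq y + \overline{k}\,]$, schematic in $m$ and $k$. Each enrichment is established by restricting quantification to a class defined from $\leq$ using $\mathsf{IQ}_3$; an arithmetical analogue of the auxiliary class $J$ from Lemma~\ref{TheClassJ} is likely needed. In the final enriched theory I would set
\[
K_1 \equiv \{\,x : \forall z\,[\,x \leq z + x\,]\,\}, \qquad K \equiv \{\,x \in K_1 : \forall y \in K_1\,[\,y + x \in K_1\,]\,\}
\]
and verify that $K$ contains $\overline{0}$, is closed under $\mathrm{S}, +, \times$, and satisfies each translated instance of $\mathsf{IQ}^*_3$. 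The crucial step is then immediate: if $x, z \in K$ and $z + x = \overline{n}$, then $x \in K_1$ gives $x \leq z + x = \overline{n}$, so $\mathsf{IQ}_3$ forces $x = \overline{k}$ for some $k \leq n$.

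The main obstacle, exactly as in the $\mathsf{ID}$ proof, is the enrichment step. Unlike $\mathsf{ID}$, the theory $\mathsf{IQ}$ proves neither $\mathsf{Q}_3$ nor full associativity of $+$, so constructing the auxiliary classes—and verifying that they contain the numerals and are closed under the operations—demands extra care. In particular, closure of $K$ under $\times$ requires securing arithmetical identities on $K$ that $\mathsf{IQ}$ does not give for free, which is likely where an analogue of Lemma~\ref{TheClassJ} pays off. I expect this bookkeeping, rather than the core idea, to absorb most of the proof.
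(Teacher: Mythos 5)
Your easy direction, your definition of $K_1 = \{x : \forall z\,[x \leq z + x]\}$, and your key closing computation (from $x,z \in K$ and $z+x = \overline{n}$, use $x \in K_1$ to get $x \leq \overline{n}$, then apply $\mathsf{IQ}_3$) are exactly the paper's. The paper's enrichment axioms are also a lighter version of yours: it adds only $\forall x\,[0 \leq x]$ and $\forall xy\,[x \leq y \rightarrow \mathrm{S}x \leq \mathrm{S}y]$ rather than the two full numeral schemas, and establishes them by restricting to a class $G$ that is the direct arithmetical transplant of the class $J$ of Lemma~\ref{TheClassJ}, as you anticipate.

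The genuine gap is in how you propose to close the domain under $\times$. Your $K = \{x \in K_1 : \forall y \in K_1\,[y+x \in K_1]\}$ buys closure under $+$ only; adding a further layer $\{x \in K : \forall y \in K\,[yx \in K]\}$ and verifying it is inductive and closed requires associativity of $+$ and $\times$, left distributivity, and related commutative-semiring identities — none of which $\mathsf{IQ}$ proves outright, and none of which a Lemma~\ref{TheClassJ}-style class about $\leq$ supplies. The paper handles this by a separate and substantial step: it introduces $\mathsf{IQ}^{(2)} = \mathsf{IQ} + \text{(I)--(VIII)}$ (Figure~\ref{StringsAsMatricesFigure}), proves in Theorem~\ref{CommutativeSemiring} via a long chain of nested inductive restrictions $N_0 \supseteq N_1 \supseteq N_2 \supseteq N_3 \supseteq N$ that $\mathsf{IQ}^{(2)}$ is interpretable in $\mathsf{IQ}$, and then invokes (the $\mathsf{IQ}^+$-version of) Lemma V.5.10 of Hajek--Pudlak to cut an arbitrary inductive class down to one closed under both $+$ and $\times$. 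Calling this ``bookkeeping'' and pointing at a $J$-analogue for the $\times$-closure misidentifies both the size and the nature of the missing piece; the commutative-semiring interpretation is an independent theorem that has to be proved before the rest of the argument can go through.
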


The proof strategy is similar to  the one we used to interpret  $ \mathsf{ID}^{ * }   $  in $ \mathsf{ID}   $.
Since we obtain an interpretation of  $  \mathsf{IQ} $ in    $  \mathsf{IQ}^* $ by translating $ \leq $ as  $  \leq_{ \mathsf{l} }  $, 
we just need to focus on proving  that   $  \mathsf{IQ}^* $ is interpretable in  $  \mathsf{IQ} $.
The proof is structured as follows: 
In Section \ref{ClosureUnderAdditionAndMultiplicationLemmaSection}, 
we extend $ \mathsf{IQ} $ to a theory $  \mathsf{IQ}^+ $ which  proves that for  each inductive class there exists a an inductive subclass that is closed under addition and multiplication. 
A class  is inductive if  it contains $0$ and is closed under  the successor function. 
In Section \ref{LengthRelationLemmaSection}, 
we extend   $ \mathsf{IQ}^+ $ to  a theory  $ \mathsf{IQ}^{++} $ with the same interpretability degree as    $ \mathsf{IQ}^+ $ 
and where the ordering  relation $ \leq $ satisfies additional properties. 
In Section \ref{InterpretationOfIQStarinIQPlussSection}, 
we show that  $  \mathsf{IQ}^* $ is interpretable in  $ \mathsf{IQ}^+ $. 
Finally, in Section  \ref{CommutativeSemiringPropertiesII}, 
we show that  $ \mathsf{IQ} $ is mutually interpretable with a theory  $  \mathsf{IQ}^{ (2) }  $ that  is an extension of  $ \mathsf{IQ}^+ $.

\subsection{Closure under Addition and Multiplication}
\label{ClosureUnderAdditionAndMultiplicationLemmaSection}

A class $X$ is called inductive if $ 0 \in X $ and $ \forall x \in X   \;   [    \   \mathrm{S}x  \in X   \  ] $.
A class $X$ is called a cut if it is inductive and 
$  \forall x  \in  X    \;  \forall y   \;   [    \     y  \leq_{  \mathsf{l}  }   x  \rightarrow    y \in X     \    ]    $.
Let  $  \mathsf{IQ}^{ + } $ and  $  \mathsf{Q}^{ + } $ be respectively   $  \mathsf{IQ} $ and  $  \mathsf{Q}$ 
extended with the following axioms
\begin{itemize}

\item[-]   Associativity  of  addition   $ \forall x y z \;   [      \    (x+y) + z = x + (y+z)      \         ]  $

\item[-]   Left distributive law   $  \forall x y z \;   [      \  x   (y+z)  = xy + xz   \         ]  $

\item[-]   Associativity  of  multiplication   $  \forall x y z \;   [      \    (xy) z = x  (yz)      \         ]       \;     $.

\end{itemize}
Lemma V.5.10  of   Hajek  \& Pudlak   \cite{HajekandPudlak2017}) says that $ \mathsf{Q}^+ $ proves that  any  inductive class has a subclass that is a cut and is closed under $ +   $  and  $  \times   $. 
The   proof of that lemma shows   that $ \mathsf{IQ}^+ $ proves that  any  inductive class has an inductive  subclass that is closed under $
  +  $  and   $   \times   $
(see also Section \ref{CommutativeSemiringPropertiesII}).

\begin{lemma} \label{LemmaClosureUnderAdditionAndMultiplication}

Let $X$ be an inductive class. 
Then, $ \mathsf{IQ}^{ + } $ proves that there exists an inductive subclass  $Y$ that is closed under $+$ and $ \times $. 

\end{lemma}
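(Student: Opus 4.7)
The plan is to build $Y$ in two stages by successively shrinking $X$, using only the recursion equations for $+$ and $\times$ in $\mathsf{Q}$ together with the extra associativity and left distributivity axioms in $\mathsf{IQ}^+$. At each stage we restrict to those elements that behave well ``on the right'' with respect to the operation under consideration, and we exploit associativity (respectively left distributivity) to upgrade this one-sided closure into full closure.

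First, define
\[
X_1 = \{\, x \in X : \forall y \in X\; (y + x \in X) \,\}.
\]
I would check that $X_1$ is inductive: $0 \in X_1$ since $y + 0 = y \in X$ by $\mathsf{Q}_4$, and if $x \in X_1$ then for any $y \in X$ we have $y + \mathrm{S}x = \mathrm{S}(y+x) \in X$ by $\mathsf{Q}_5$ and the inductiveness of $X$. Next, $X_1$ is closed under $+$: given $a,b \in X_1$ and $y \in X$, associativity gives $y + (a+b) = (y+a) + b$, and the two factors land in $X$ by the defining property of $X_1$ applied first to $a$ and then to $b$; taking $y \in X$ arbitrary shows $a+b \in X_1$ (and $a+b \in X$ follows by specialising $y = a$).

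Second, define
\[
X_2 = \{\, x \in X_1 : \forall y \in X_1\; (y \cdot x \in X_1) \,\}.
\]
The inductiveness of $X_2$ follows from $\mathsf{Q}_6$ (giving $y\cdot 0 = 0 \in X_1$) and $\mathsf{Q}_7$ (giving $y\cdot \mathrm{S}x = y\cdot x + y$, which stays in $X_1$ because $X_1$ is closed under $+$). Closure under $\times$ is obtained exactly as for $+$ above, using associativity of multiplication: for $a,b \in X_2$ and $y \in X_1$, $y\cdot(a\cdot b) = (y\cdot a)\cdot b \in X_1$. The decisive observation is that $X_2$ is \emph{still} closed under $+$: given $a,b \in X_2$ and $y \in X_1$, left distributivity yields $y\cdot(a+b) = y\cdot a + y\cdot b$, which lies in $X_1$ because each summand does (by $a,b \in X_2$) and $X_1$ is closed under $+$. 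Setting $Y = X_2$ gives an inductive subclass of $X$ closed under both $+$ and $\times$.

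The main subtlety, and essentially the only obstacle, is that $\mathsf{Q}$ proves $x+0=x$ and $x\cdot \mathrm{S}y = xy + x$ but does \emph{not} prove the symmetric equations $0+x = x$ or $\mathrm{S}y \cdot x = yx + x$, so I have to be careful to always place the ``new'' variable on the right side in the defining conditions of $X_1$ and $X_2$; otherwise induction on $x$ would not slide through. The reason both associativity axioms and the \emph{left} distributive law (rather than the right one) are exactly what is needed is visible in the three critical identities $y+(a+b)=(y+a)+b$, $y\cdot(a\cdot b)=(y\cdot a)\cdot b$, and $y\cdot(a+b)=y\cdot a + y\cdot b$, each of which propagates closure from $a$ and $b$ through a fixed parameter $y$ on the left.
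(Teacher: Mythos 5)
Your proposal is correct and follows exactly the approach the paper relies on: the paper defers to Lemma V.5.10 of Hajek \& Pudlak and to the construction in Section~\ref{CommutativeSemiringPropertiesII}, where the classes $N_3$ and $N$ are built from $N_2$ by precisely your two-stage restriction (first $\{u : \forall x\,(x+u \in \cdot)\}$ using associativity of $+$, then $\{u : \forall x\,(xu \in \cdot)\}$ using associativity of $\times$ and left distributivity). Your observation about why the shrinking variable must sit on the right is the same point that drives that construction.
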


\subsection{Ordering  Relation}
\label{LengthRelationLemmaSection}

Let  $ \mathsf{IQ}^{ ++ }  $ be  $ \mathsf{IQ}^{ + }  $ extended with the following axioms 
\[
\forall x \;   [   \   0  \leq  x    \    ]    
,   \     \    \   
\forall x y \;   [    \    x \leq y  \rightarrow   Sx \leq Sy     \       ]   
\]

Using the ideas of Section \ref{SuffixRelationIISection}, 
we  prove the following lemma.

\begin{lemma}  \label{InterMediateQSecondLemma}

 $ \mathsf{IQ}^{ + }   $ and  $ \mathsf{IQ}^{ ++ }  $ are mutually interpretable.  
 
\end{lemma}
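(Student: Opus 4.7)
The plan is to mirror the proof of Lemma~\ref{SecondSuffixLemma}. Since $\mathsf{IQ}^{++}$ extends $\mathsf{IQ}^{+}$, the direction $\mathsf{IQ}^{+} \leq \mathsf{IQ}^{++}$ is immediate, so I focus on interpreting $\mathsf{IQ}^{++}$ in $\mathsf{IQ}^{+}$. Following the template of Section~\ref{SuffixRelationIISection}, I do not restrict the domain; I only retranslate the relation symbol $\leq$.

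The construction rests on an arithmetical analogue $Y$ of the class $J$ from Lemma~\ref{TheClassJ}. I define $Y$ to consist of those $z$ for which
(i) $z \leq z$;
(ii) $\forall w \leq z\,[w \leq w]$;
(iii) $\forall w \leq z\, \forall v_{0} \leq w\, \forall v_{1} \leq v_{0}\,[v_{1} \leq w]$, a local transitivity below $z$;
(iv) $\forall w \leq z\,[0 \leq w]$;
(v) $\forall w \leq z\,[w = 0 \vee \exists u \leq w\,(w = \mathrm{S}u)]$; and
(vi) $\forall w \leq z\,\forall x y\,[(w = \mathrm{S}y \wedge x \leq y) \rightarrow \mathrm{S}x \leq w]$.
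I then translate $\leq$ by
\[
x \leq^{\tau} y \;\equiv\; (y \in Y \wedge x \leq y) \;\vee\; y \notin Y.
\]

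The two key facts to verify are that every numeral $\overline{n}$ belongs to $Y$ and that $Y$ is downward closed under $\leq$. The first is an external induction on $n$: the axiom schema $\mathsf{IQ}_{3}$ pins down the predecessors of $\overline{n}$ as the numerals $\overline{0}, \ldots, \overline{n}$, so each of the six clauses reduces to finitary checks using $\mathsf{Q}_{1}$, $\mathsf{Q}_{2}$ and $\mathsf{IQ}_{3}$. The second is obtained by applying clause (iii) for $z$ with $w = z$ (permitted by (i)), which yields that any $w \leq z'$ with $z' \leq z$ also satisfies $w \leq z$; the remaining clauses of $Y$ for $z$, restricted to $\{w : w \leq z'\}$, then directly deliver the corresponding clauses for $z'$. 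With these in place, the translated axioms of $\mathsf{IQ}^{++}$ are straightforward: each instance of $\mathsf{IQ}_{3}$ collapses to itself since $\overline{n} \in Y$; the axiom $\forall x\,[0 \leq^{\tau} x]$ follows from (iv) on $Y$ and is vacuous off $Y$; and the axiom $\forall xy\,[x \leq^{\tau} y \rightarrow \mathrm{S}x \leq^{\tau} \mathrm{S}y]$ follows in the nontrivial case $\mathrm{S}y \in Y$ by using (v) together with $\mathsf{Q}_{1}$, $\mathsf{Q}_{2}$ to conclude $y \leq \mathrm{S}y$, hence $y \in Y$ by downward closure, so $x \leq^{\tau} y$ collapses to $x \leq y$, and then (vi) with $w = \mathrm{S}y$ yields $\mathrm{S}x \leq \mathrm{S}y$.

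The main obstacle is crafting the defining formula of $Y$ so that it simultaneously captures enough structure of $\leq$ to satisfy the new axioms of $\mathsf{IQ}^{++}$ under the translation, yet is liberal enough that all numerals provably lie in $Y$ and that $Y$ is downward closed under $\leq$. Since $\mathsf{IQ}^{+}$ does not prove reflexivity or transitivity of $\leq$ globally, these properties must be built into the definition of $Y$ itself; this is the role of clauses (i)--(iii), and it is exactly the arithmetical reflection of how transitivity and reflexivity of $\preceq_{\mathsf{sub}}$ were encoded in the defining clauses of $J$ in Lemma~\ref{TheClassJ}.
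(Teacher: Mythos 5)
Your proposal is correct and follows essentially the same approach as the paper: the paper defines a class $G$ with six clauses nearly identical to your $Y$ (the only difference being that the paper's analogue of your clause (iv) is the weaker $\forall w \leq u\,\forall x\,[w = \mathrm{S}x \rightarrow 0 \leq w]$, which it then combines with the successor clause), uses the same relativized translation of $\leq$, and verifies the same three facts in the same way. The slight strengthening in your clause (iv) is harmless since $0 \leq \overline{k}$ is an instance of $\mathsf{IQ}_3$, so numerals still land in $Y$ and downward closure is preserved.
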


\begin{proof}

Since  $ \mathsf{IQ}^{ ++ }  $ is an extension of    $ \mathsf{IQ}^{ + }   $, 
it suffices to show that   $ \mathsf{IQ}^{ ++ }  $ is interpretable in  $ \mathsf{IQ}^{ + }   $.
Furthermore, it suffices to show that we can translate $ \leq $  in such a way that 
$ \mathsf{IQ}^{ + }   $ proves the translation of each instance  of  $  \mathsf{IQ}_3 $ and the translation of  
$ \forall x \;   [   \   0  \leq  x    \    ]     $ and 
$ \forall x y \;   [    \    x \leq y  \rightarrow   Sx \leq Sy     \       ]   $.

Let $ u \in G $ if and only if 
\begin{itemize}

\item[(1)]   $u   \leq    u  $

\item[(2)]   $ \forall w    \leq      u   \;   [     \    w  \leq      w     \    ]    $

\item[(3)]   $ \forall w    \leq      u   \;  \forall  v_0    \leq     w    \;  \forall   v_1    \leq      v_0    \;    [   \   v_1   \leq     w    \    ]   $

\item[(A)]  $   \forall w     \leq     u  \;      [    \  w = 0  \;   \vee   \;  \exists v     \leq   w   \;   [   \  w =  \mathrm{S}  v     \    ]      $

\item[(B)]  $   \forall w     \leq     u  \;   \forall x    \;   [    \   w =   \mathrm{S}  x  \rightarrow  0   \leq   w      \    ]      $

\item[(C)]  $   \forall w     \leq     u  \;   \forall x  y     \;     [    \ 
(       \    w =   \mathrm{S}  y     \;    \wedge    \;        x  \leq   y      \          )     \rightarrow     \mathrm{S}  x  \leq   w      \      ]      $.

\end{itemize} 
It can be verified that $ \mathsf{IQ}  $ proves that     $ t \in  G$ for each variable-free term $t$  and that $G$  is downward closed under  $ \leq $.

We translate $  \leq $ as follows 
\[
x    \leq^{ \tau }   y     \equiv    \   
(                \             y \in G   \;   \wedge   \;   x      \leq      y                \           )  
\    \vee   \  
(         \    y   \not\in G   \;   \wedge   \;       x  = x      \           )  
\      .
\]
Since   $ t \in  G$ for each variable-free term $t$,  the translation of each instance of  the axiom schema $ \mathsf{IQ}_3 $ is a theorem of 
$  \mathsf{IQ}^+   $.

We show that   $   \mathsf{IQ}^+   $ proves the translation of   $   \forall x \;   [   \   0  \leq  x    \    ]     $.
Choose an arbitrary $x $. 
If $ x \not\in G $, then $ 0  \leq ^{ \tau }  x  $ holds  by the second disjunct in the definition of $    \leq^{ \tau }  $. 
Otherwise,   $ x \in G $.  We need to show that $ 0 \leq x $ holds. 
If $ x = 0 $, then $ 0  \leq x $ holds  by  $ \mathsf{IQ}_3 $.
Otherwise, by (A),   there exists $ v \leq x $ such that $ x = \mathrm{S} v $. 
Then, by (B), $ 0 \leq x $ holds. 
Thus,   $    \mathsf{IQ}^+     \vdash    \forall x \;   [   \   0  \leq ^{ \tau }  x    \    ]     $.

We show that    $  \mathsf{IQ}^+  $ proves the translation of   
$  \forall x y \;   [    \    x \leq y  \rightarrow   \mathrm{S} x \leq   \mathrm{S}  y     \       ]     $. 
Assume $ x \leq y $ holds. 
If $  \mathrm{S}  y   \not\in G $, then  $  \mathrm{S}  x  \leq ^{ \tau }      \mathrm{S}  y    $  holds  
 by the second disjunct in the definition of $    \leq^{ \tau }  $. 
 Otherwise,    $  \mathrm{S}  y   \in G $. 
 We need to show that  $ \mathrm{S}  x  \leq \mathrm{S}  y $ holds. 
 By $  \mathsf{Q}_2 $,  $    \mathrm{S}  y   \neq 0 $. 
 Hence, by (A), there exists $ v \leq  \mathrm{S}  y   $ such that  $   \mathrm{S}  y   =    \mathrm{S}  v $. 
 By  $  \mathsf{Q}_1 $,   $ y  = v $.
 Hence, $  y \leq  \mathrm{S}  y  $. 
Since $G$ is downward closed under $ \leq $,  we have $  y \in G $. 
 Then,  by (C),    $ \mathrm{S}  x  \leq \mathrm{S}  y $ holds. 
 Thus,  
 $   \mathsf{IQ}^+  \vdash    \forall x y \;   [    \    x \leq ^{ \tau }  y  \rightarrow   \mathrm{S} x \leq ^{ \tau }    \mathrm{S}  y     \       ]    $.
\end{proof}

\subsection{Interpretation of $ \mathsf{IQ}^{ * }  $ in $ \mathsf{IQ}^{ + }  $ }
\label{InterpretationOfIQStarinIQPlussSection}

\begin{lemma}

$ \mathsf{IQ}^{ * }  $ is  interpretable in $ \mathsf{IQ}^{ +  }  $.  

\end{lemma}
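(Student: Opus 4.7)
The plan is to interpret $\mathsf{IQ}^*$ in $\mathsf{IQ}^{+}$ along the same lines as the interpretation of $\mathsf{ID}^{*}$ in $\mathsf{ID}$ given in Section \ref{InterpretationOfIDBarinIDSection}. By Lemma \ref{InterMediateQSecondLemma} it suffices to give the interpretation in $\mathsf{IQ}^{++}$, where the two extra monotonicity axioms for $\leq$ will play the role that the axioms for $\preceq_{\mathsf{suff}}$ played in $\mathsf{ID}^{(5)}$. Since all axioms of $\mathsf{IQ}^{*}$ apart from $\mathsf{IQ}_{3}^{*}$ are universal sentences, it will be enough to relativize quantification to a domain $K$ that is inductive, closed under $+$ and $\times$, and with the key property that whenever $z, x \in K$ satisfy $z + x = \overline{n}$ we can already conclude that $x$ is a numeral.

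First I would introduce the arithmetical analogue of the class $K_{1}$ from the proof of Lemma \ref{SecondSuffixLemma}, namely
\[
K_{1} \;=\; \{\, u :\; \forall z \;[\, u \leq z + u \,] \,\}.
\]
Working in $\mathsf{IQ}^{++}$, I would verify that $K_{1}$ is inductive: for $0 \in K_{1}$ one uses $\mathsf{Q}_{4}$ to rewrite $z + 0$ as $z$ and then applies the $\mathsf{IQ}^{++}$-axiom $\forall x\,[\,0 \leq x\,]$; for closure under $\mathrm{S}$, given $u \in K_{1}$ and arbitrary $z$, the hypothesis $u \leq z+u$ together with the $\mathsf{IQ}^{++}$-axiom $\forall xy\,[\,x \leq y \rightarrow \mathrm{S}x \leq \mathrm{S}y\,]$ yields $\mathrm{S}u \leq \mathrm{S}(z+u)$, and $\mathsf{Q}_{5}$ rewrites the right-hand side as $z + \mathrm{S}u$. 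Next I would apply Lemma \ref{LemmaClosureUnderAdditionAndMultiplication} to the inductive class $K_{1}$ to obtain an inductive subclass $K \subseteq K_{1}$ that is closed under $+$ and $\times$; this $K$ serves as the domain, with $0, \mathrm{S}, +, \times$ translated directly.

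It remains to verify that $\mathsf{IQ}^{+}$ proves the translation of every axiom of $\mathsf{IQ}^{*}$. The axioms $\mathsf{Q}_{1}, \mathsf{Q}_{2}, \mathsf{Q}_{4}\!-\!\mathsf{Q}_{7}$ are universal, hence transfer to $K$ automatically; inductivity of $K$ ensures that each numeral $\overline{n}$ lies in $K$, so the interpretation is well-defined on closed terms. For the translation of an instance of $\mathsf{IQ}_{3}^{*}$, I would fix $x \in K$ with some $z \in K$ satisfying $z + x = \overline{n}$; since $x \in K \subseteq K_{1}$ we get $x \leq z + x = \overline{n}$, and the original axiom $\mathsf{IQ}_{3}$ (an axiom of $\mathsf{IQ}^{+}$) then forces $x = \overline{k}$ for some $k \leq n$. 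The step I expect to be the main obstacle, and the reason the construction cannot be copied verbatim from Section \ref{InterpretationOfIDBarinIDSection}, is obtaining closure of $K$ under \emph{multiplication}: the naive iterated restriction $\{u \in K_{1} : \forall v \in K_{1}\,[\, v + u \in K_{1} \,]\}$ does deliver closure under $+$ in essentially the same way as in the concatenation case, but multiplication does not interact with the $K_{1}$-property in a cancellation-style manner, so a direct hand construction breaks down. This is exactly the point at which Lemma \ref{LemmaClosureUnderAdditionAndMultiplication} is indispensable, cleanly supplying closure under both $+$ and $\times$ in a single stroke.
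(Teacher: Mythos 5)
Your proposal is correct and follows essentially the same route as the paper: pass to $\mathsf{IQ}^{++}$ via Lemma \ref{InterMediateQSecondLemma}, define $K_1 = \{u : \forall z\,[u \leq z+u]\}$, verify inductivity using $\mathsf{Q}_4$, $\mathsf{Q}_5$, and the two new $\mathsf{IQ}^{++}$-axioms, invoke Lemma \ref{LemmaClosureUnderAdditionAndMultiplication} for an inductive subclass $K$ closed under $+$ and $\times$, and then use $y \in K_1$ together with $\mathsf{IQ}_3$ to handle the translation of $\mathsf{IQ}_3^*$. Your closing observation that closure under multiplication is the step that forces reliance on Lemma \ref{LemmaClosureUnderAdditionAndMultiplication} rather than a direct iterated-restriction argument is a correct and useful remark, though not needed for the proof itself.
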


\begin{proof}

By Lemma  \ref{InterMediateQSecondLemma}, 
it suffices to show that  $ \mathsf{IQ}^{ * }  $ is  interpretable in   $ \mathsf{IQ}^{ ++  }  $.  
We interpret   $ \mathsf{IQ}^{ * }  $  in   $ \mathsf{IQ}^{ ++  }  $ by simply restricting the universe of $ \mathsf{IQ}^{ ++  }  $ to an indctive subclass  $K$ that is closed under $+, \times $ and which is such that 
$ \mathsf{IQ}^{ ++  }  $ proves  that   $   \forall x , u  \in K  \;   [    \    u \leq x + u    \    ]     $.

Let 
\[
K_1 = \lbrace u :   \     \forall x \;   [    \    u \leq x + u    \    ]         \    \rbrace 
\]
We have $ 0 \in K_1 $ by the axiom $ \forall x \;   [   \   0  \leq  x    \    ]     $ and $  \mathsf{Q}_4 $.
We show that $K_1$ is closed under  $  \mathrm{S} $.
Let $ u \in K_1 $. 
 We need to show that $ \mathrm{S}  u \in K_1 $. 
That is, we need to show that $ \mathrm{S}  u \leq x +   \mathrm{S}u $. 
Since $ u \in K_1$, we have  $ u \leq x+ u $. 
Then, $ \mathrm{S}  u \leq \mathrm{S} (x+u) $ by  the axiom 
$ \forall x y \;   [    \    x \leq y  \rightarrow   \mathrm{S}  x \leq   \mathrm{S}  y     \       ]    $.
By  $  \mathsf{Q}_5 $,    we have  
\[
  \mathrm{S}  u \leq    \mathrm{S}  (x+u) = x + \mathrm{S}  u 
  \        .
  \]
Hence, $ \mathrm{S} u \in K_1 $. 
Thus, $K_1$ contains $0$ and is closed under    $  \mathrm{S} $. 
By Lemma   \ref{LemmaClosureUnderAdditionAndMultiplication}, 
there exists an inductive subclass $K$ of $K_1 $ that is closed under $  + $ and $  \times $.

We interpret   $ \mathsf{IQ}^{ * }  $  in   $ \mathsf{IQ}^{ ++  }  $ by relativizing quantification to $K$. 
The translation of each one of the axioms 
$ \mathsf{Q}_1   \!  -   \!    \mathsf{Q}_2 $,  $ \mathsf{Q}_4    \!  -   \!    \mathsf{Q}_7 $  
 is a theorem of   $ \mathsf{IQ}^{ ++  }  $  since universal sentences are absolute for $K$. 
It remains to show that each instance of  $ \mathsf{IQ}_3^*  $ is a theorem of  $ \mathsf{IQ}^{ ++  }  $. 
Choose a natural number $n$. 
We need to show that 
\[
\mathsf{IQ}^{ ++  }   \vdash   \forall x ,  y  \in K    \;   [    \   x + y = \overline{n} \rightarrow  
\bigvee_{ k \leq  n  }    y =   \overline{k}       \      ]  
\       .
\]
Assume $ x, y \in K $ and  $ x + y =   \overline{n}    $.
Since $ y \in K \subseteq K_1 $, we have $ y \leq    \overline{n}   $. 
By the axiom schema $  \mathsf{IQ}_3 $, there exists $ k \leq n $ such that $ y =  \overline{k}  $. 
Thus,  $  \mathsf{IQ}^{ ++ } $ proves the translation of each instance of  $ \mathsf{IQ}_3^*  $.
\end{proof}

\section{Interpretability of $   \overline{ \mathsf{ID}   }  $ in   $   \mathsf{IQ}   $ }
\label{InterpretationOfIDInIQ}

\begin{figure}[hbt!]

\[
\begin{array}{r l  c   r l  } 
&{\large \textsf{The Axioms of }   \overline{ \mathsf{ID} }    }
\\
\\
\mathsf{ ID_{1} } 
&  \forall x  y  z   \;   [ \  (x y) z = x (y z)    \ ]  
\\
\mathsf{ ID_{2} } 
&  \forall x y \; [ \  x \neq y \to   ( \ x   0 \neq y   0  \wedge  x   1 \neq y   1 \   )   \ ]  
\\
\mathsf{ ID_{3} } 
&   \forall x y  \;   [    \     x0 \neq y1       \        ]   
\\
\mathsf{ ID_{4} } 
&    \forall x  \;  [  \  x \preceq   \overline{\alpha}    \leftrightarrow  
 \bigvee_{ \gamma  \in \mathsf{Pref}  ( \alpha )   }  x=  \overline{ \gamma}   \   ] 
 \\
\overline{ \mathsf{ID} }_5 
&  \forall x y \; [ \  x \neq y \to   ( \        0 x    \neq 0y     \wedge     1x  \neq   1y    \   )   \ ]  
\\
\overline{ \mathsf{ID} }_6
&   \forall x y  \;   [    \     0x \neq 1y       \        ]   
\\
\\
& {\large \textsf{The Axioms of } \mathsf{IQ} }
\\
\\
 \mathsf{Q}_1   
& \forall x   y \;  [ \  x \neq y \rightarrow \mathrm{S}  x \neq \mathrm{S}  y    \ ]   
\\
 \mathsf{Q}_2  
& \forall x  \;   [    \    \mathrm{S}  x  \neq 0        \         ]   
\\
 \mathsf{Q}_4  
&  \forall x  \;   [ \  x+0 = x  \ ]   
\\
 \mathsf{Q}_5  
& \forall x  y \;   [         \  x+ \mathrm{S}  y = \mathrm{S}  ( x+y)   \        ] 
\\
\mathsf{Q}_6 
&  \forall x  \;   [          \  x \times 0 = 0  \          ]
\\
\mathsf{Q}_7 
&  \forall x  y  \;  [          \  x \times  \mathrm{S}  y =   x \times y    + x   \            ]  
\\
\mathsf{IQ}_3
&
 \forall x \;   [ \  x \leq \overline{n}   \leftrightarrow    \bigvee_{k \leq n } x = \overline{k}   \ ]  
\end{array}
\] 
\caption{
Non-logical axioms of the first-order theories  $ \overline{ \mathsf{ID}  } $ and  $ \mathsf{IQ}$.
}
\label{AxiomsOfIQandIDOverline}
\end{figure}

In this section, we show that  $ \overline{ \mathsf{ID}  } $ is interpretable  in   $   \mathsf{IQ}   $
(see Figure \ref{AxiomsOfIQandIDOverline} for the axioms of  $ \overline{ \mathsf{ID}  } $  and    $   \mathsf{IQ}   $).
The most intuitive way to  interpret concatenation theories in arithmetical theories is to construct    a    formula 
$ \phi_{ \circ } (x, y, z ) $ that given $x $ and $y$  defines an object that  encodes  a  computation  of  $ x  \circ  y   $. 
Unfortunately, 
$   \mathsf{IQ}   $  does not have the resources necessary  to prove that we can find a domain $I$  on which 
 $ \phi_{ \circ } (x, y, z ) $ defines a function 
 that  satisfies  $ \mathsf{ID}_1 $, $ \mathsf{ID}_2 $,  $ \mathsf{ID}_3 $, 
 $ \overline{  \mathsf{ID} }_5    $,  $   \overline{   \mathsf{ID}   }_6    $. 
 To prove correctness of recursive definition  in Robinson Arithmetic $ \mathsf{Q} $, 
 we  rely on  the axiom  
 $ \mathsf{Q}_3  \equiv   \   
\forall x  \;  [   \  x = 0  \;  \vee   \;    \exists y  \;   [    \  x = Sy   \   ]     \    ] 
$.
The axiom schema 
$
\mathsf{IQ}_3 
\equiv       \    
 \forall x  \;    [       \      x \leq \overline{n}   \leftrightarrow            \bigvee_{ k \leq n    }       x=  \overline{ k}         \        ] 
$
can only allow us to verify that  $ \phi_{ \circ } (x, y, z ) $ gives a correct value $z$   when $ x$ and $y $  represent   variable-free terms. 
Thus, to  interpret  $ \overline{ \mathsf{ID}  } $  in   $   \mathsf{IQ}   $, 
we need a conception of  strings as numbers   that allows us to translate concatenation without coding  sequences.
The translation  needs to also be simple enough that we can prove its  correctness   in   $ \mathsf{IQ} $. 
In Lemma  4 of    \cite{Ganea2009},  
Ganea explains  how we can  translate concatenation as a   $ \Delta_0$-formula in strong theories such as  Peano Arithmetic $ \mathsf{PA} $ 
and $ \mathsf{I \Delta}_0 $.

Although we  show that  $ \overline{ \mathsf{ID}  } $ is interpretable  in   $   \mathsf{IQ}   $, 
we have not been able to determine whether the converse holds.

\begin{open problem}

Is  $   \mathsf{IQ}   $    interpretable  in   $\overline{ \mathsf{ID}  }  $?

\end{open problem}

As mentioned, the main result of this section is the following theorem.

\begin{theorem}

 $ \overline{  \mathsf{ID} } $ is  interpretable in $ \mathsf{IQ} $. 

\end{theorem}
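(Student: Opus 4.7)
The plan is to encode nonempty binary strings as natural numbers via $\alpha \mapsto (1\alpha)_2$, so that $\boldsymbol{0} \mapsto 2$, $\boldsymbol{1} \mapsto 3$, and a string $\alpha$ of length $n$ becomes a number in $[2^n, 2^{n+1})$. Under this encoding, appending $\boldsymbol{0}$ or $\boldsymbol{1}$ to the right becomes $x \mapsto 2x$ and $x \mapsto 2x+1$ respectively, while general concatenation is $a \circ b = (a-1) \cdot 2^{|b|} + b$ where $|b|$ is the bit-length of $b$. Thus I would translate the constants $0, 1$ as $\overline{2}, \overline{3}$, and translate $x \circ y = z$ by
\[
\psi_{\circ}(x,y,z) \;\equiv\; \exists k \, [\, \mathrm{Pow}(k) \,\wedge\, k \leq y \,\wedge\, y < k+k \,\wedge\, x \cdot k + y = z + k \,],
\]
where $\mathrm{Pow}(k)$ is a first-order formula expressing that $k$ is a power of $2$ (e.g., $k \geq 1 \wedge \forall d \leq k \, [\, d \mid k \wedge d > 1 \to 2 \mid d \,]$, with divisibility expanded in the usual way). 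The prefix relation is translated by $x \preceq y \equiv x = y \vee \exists c \, \psi_{\circ}(x, c, y)$.

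Next I would interpret $\overline{\mathsf{ID}}$ by relativizing quantifiers to a carefully constructed domain $K$, in the style of Sections~\ref{FirstSectionOnAtoms} and~\ref{InterpretationOfIDBarinIDSection}. Working in $\mathsf{IQ}^{+}$, I would start from the inductive class $\{x : \overline{2} \leq x\}$ and apply Lemma~\ref{LemmaClosureUnderAdditionAndMultiplication} to obtain an inductive subclass on which $+$ and $\times$ are associative, commutative, and distributive. Further layered closure conditions---analogous to the progressive refinements $K_1, K_2, K$ used earlier in the paper---would give full cancellation on $K$ and ensure that every $y \in K$ admits a \emph{unique} $k$ with $\mathrm{Pow}(k) \wedge k \leq y < k+k$, so that $\psi_{\circ}$ is provably functional on $K$. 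The resulting class $K$ will contain $\overline{2}, \overline{3}$ and be closed under $\psi_{\circ}$.

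On $K$, I would verify each axiom of $\overline{\mathsf{ID}}$. The negative equational axioms reduce to elementary arithmetic: $\mathsf{ID}_3$ becomes the parity statement $2x \neq 2y+1$; $\mathsf{ID}_2$ and $\overline{\mathsf{ID}}_5$ follow from cancellation of doubling and addition on $K$; and $\overline{\mathsf{ID}}_6$ follows from a direct size comparison together with the length-witness machinery. The associativity axiom $\mathsf{ID}_1$ reduces to the identity $((a-1) k_b + b - 1) k_c + c = (a-1) k_b k_c + ((b-1) k_c + c)$, which is provable by distributivity on $K$ once one knows that $k_b k_c$ is the length-witness of $b \circ c$. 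For each instance of $\mathsf{ID}_4(\alpha)$, the term $\overline{\alpha}$ translates to a specific numeral $\overline{n}_{\alpha}$; since any $x$ satisfying the translated prefix relation to $\overline{n}_{\alpha}$ is bounded by $\overline{n}_{\alpha}$, the axiom schema $\mathsf{IQ}_3$ lets me enumerate the finitely many candidates $x \leq \overline{n}_{\alpha}$ and verify by direct calculation which of them are prefix-encodings of $\alpha$.

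The main obstacle is crafting $K$ so that $\psi_{\circ}$ is provably functional and satisfies the arithmetic identities above: uniqueness of the length-witness $k$ for each $y$ and the multiplicative behavior $k_{b \circ c} = k_b k_c$ must both be baked into the closure conditions defining $K$, much as the class $J$ in Lemma~\ref{TheClassJ} was crafted to witness specific properties of the suffix relation. A secondary difficulty is the verification of $\mathsf{ID}_1$: although the underlying identity is immediate in the standard model, pushing it through in $\mathsf{IQ}^{+}$ requires that $K$ be closed under products of length-witnesses, which forces still further refinement of the closure conditions defining $K$.
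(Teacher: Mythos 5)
Your encoding is genuinely different from the paper's. The paper encodes strings as $2\times 2$ matrices with natural-number entries and determinant~$1$, with $\boldsymbol{0} \mapsto \left(\begin{smallmatrix}1&0\\1&1\end{smallmatrix}\right)$, $\boldsymbol{1} \mapsto \left(\begin{smallmatrix}1&1\\0&1\end{smallmatrix}\right)$, and concatenation translated as matrix multiplication; this yields a $4$-dimensional interpretation, verified in $\mathsf{IQ}^{(2)}$ (the extension of $\mathsf{IQ}$ by the commutative-semiring axioms (I)--(VIII)), which is then shown mutually interpretable with $\mathsf{IQ}$ by a layered cut construction. You instead propose the binary-positional encoding $\alpha \mapsto (1\alpha)_2$.

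There is a genuine gap in your approach, and it is exactly where the paper's approach was designed to be easy. For a relative interpretation, the translation $\psi_{\circ}(x,y,z)$ of the function symbol $\circ$ must be \emph{provably total and single-valued} on the domain: for all $x,y \in K$ there must exist a unique $z \in K$ with $\psi_{\circ}(x,y,z)$. In the paper's matrix encoding, $\psi_{\circ}$ is a system of polynomial equalities (the entries of the matrix product), so totality is automatic once $K$ is closed under $+$ and $\times$; no existential quantifier appears. Your $\psi_{\circ}$, by contrast, existentially quantifies a length-witness $k$ with $\mathrm{Pow}(k) \wedge k \le y < k+k$. To verify totality you must prove, inside (a conservative strengthening of) $\mathsf{IQ}$, that every $y \in K$ has such a $k$, that it is unique, and that $K$ is closed under the resulting operation --- in particular, that the product $k_x \cdot k_y$ of two length-witnesses again satisfies $\mathrm{Pow}$ and is the length-witness of the product. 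None of these facts is available: $\mathsf{IQ}$ has no induction and does not even prove $\mathsf{Q}_3$, so the $\forall d \le k$ quantifier inside $\mathrm{Pow}(k)$ (for $k$ a variable, not a numeral) is uncontrolled, and existence claims of the form ``there is a power of $2$ bracketing $y$'' have no provable basis. Lemma~\ref{LemmaClosureUnderAdditionAndMultiplication} gives you closure under $+$ and $\times$, but says nothing about the multiplicative structure of powers of $2$ or about bracketing existence, and it is far from clear that further ``layered closure conditions'' can supply them, since restricting the domain preserves universal facts but generally destroys existential ones. (A smaller problem: $\{x : \overline{2}\le x\}$ is not inductive in the paper's sense since it excludes~$0$, so Lemma~\ref{LemmaClosureUnderAdditionAndMultiplication} does not apply to it directly.) The paper chooses the matrix encoding precisely so that translating concatenation never introduces an existential quantifier, and the entire burden falls on the commutative-semiring axioms (I)--(VIII), which can be secured by the standard cut construction of Theorem~\ref{CommutativeSemiring}.
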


The proof of the theorem is structured  as follows: 
In Section  \ref{StringsAsMatrices},  
we explain how we intend to interpret  $ \overline{ \mathsf{ID}  }  $ in  $   \mathsf{IQ}   $. 
In Section \ref{CommutativeSemiringProperties0}, 
we use this idea to give a simple interpretation of $  \mathsf{WD} $ in $  \mathsf{R} $.  
In Section  \ref{CommutativeSemiringPropertiesI},  
we show that we can interpret  $ \overline{ \mathsf{ID}  }     $   
in an extension of  $   \mathsf{IQ}   $ which we denote    $   \mathsf{IQ}^{ (2) }   $.
Finally, in Section \ref{CommutativeSemiringPropertiesII}, 
we show that   $   \mathsf{IQ}   $ and  $   \mathsf{IQ}^{ (2) }   $ are mutually interpretable.

\subsection{Strings as Matrices}
\label{StringsAsMatrices}

The idea  is to think of strings as $2 \times 2 $ matrices and to translate concatenation as matrix multiplication.
Let us first see how we can use this idea to  give a $4$-dimensional interpretation of   
$  (   \lbrace \boldsymbol{0}, \boldsymbol{1}  \rbrace^{ * } ,   \varepsilon , \boldsymbol{0}, \boldsymbol{1}, ^\frown )  $ 
 in  $ (  \mathbb{N} , 0, 1 , + , \times  ) $, 
 where $ \varepsilon $ denotes the  empty string and  
 $  \lbrace \boldsymbol{0}, \boldsymbol{1}  \rbrace^{ * }    
 =  
  \lbrace \boldsymbol{0}, \boldsymbol{1}  \rbrace^{ + }  \cup \lbrace \varepsilon \rbrace $.
Let 
\[
\varepsilon^{ \tau }   := 
  \begin{pmatrix}
1  &  0 
\\
0   &   1   
\end{pmatrix}
, \     \          \   
\         \      \ 
\boldsymbol{0}^{ \tau }   := 
  \begin{pmatrix}
1  &  0 
\\
1   &   1   
\end{pmatrix}
, \     \          \   
\         \      \ 
\boldsymbol{1}^{ \tau }   := 
  \begin{pmatrix}
1  &  1
\\
0   &   1   
\end{pmatrix}
\              .
\]
Let $ \mathbf{SL}_2 (  \mathbb{N} ) $ denote the monoid  generated by  
$  \boldsymbol{0}^{ \tau }  $ and $\boldsymbol{1}^{ \tau }  $ under matrix multiplication. 
The monoid  $\mathbf{SL}_2  (  \mathbb{N} ) $ is a substructure of the special linear group $ \mathbf{SL}_2 ( \mathbb{Z} ) $ 
of $ 2 \times 2 $ matrices with integer coefficients and determinant $1 $. 
Let $ \times $ denote  matrix multiplication. 
Then, 
$  (   \lbrace \boldsymbol{0}, \boldsymbol{1}  \rbrace^{ * } , \varepsilon ,  \boldsymbol{0}, \boldsymbol{1}, ^\frown )  $
is isomorphic to 
$ ( \mathbf{SL}_2  (  \mathbb{N} )  , \varepsilon^{ \tau }  ,  \boldsymbol{0}^{ \tau } ,   \boldsymbol{1}^{ \tau } ,  \times ) $.
Since  $ \mathbf{SL}_2  (  \mathbb{N} )  $ is the set of  $ 2 \times 2 $ matrices with natural number coefficients and determinant $1$, 
the isomorphism defines  a $4$-dimensional interpretation of 
 $  (   \lbrace \boldsymbol{0}, \boldsymbol{1}  \rbrace^{ * } , \varepsilon ,  \boldsymbol{0}, \boldsymbol{1}, ^\frown )  $ 
 in  $ (  \mathbb{N} , 0, 1 , + , \times  ) $.
 The idea is to specify an interpretation of  $ \overline{  \mathsf{ID} } $  in $ \mathsf{IQ} $
 by building on this interpretation of  
 $  (   \lbrace \boldsymbol{0}, \boldsymbol{1}  \rbrace^{ * } , \varepsilon ,  \boldsymbol{0}, \boldsymbol{1}, ^\frown )  $ 
 in  $ (  \mathbb{N} , 0, 1 , + , \times  ) $.
But  we need to be careful since the axioms $ \mathsf{IQ}_1  \! - \! \mathsf{IQ}_2  $,   $ \mathsf{IQ}_4  \! - \! \mathsf{IQ}_7  $ have many models.

In Lemma 11 of    \cite{MurwanashyakaAML},
we use this  idea of associating strings with matrices to prove  that  $   \mathsf{ID}_1  \! -   \!  \mathsf{ID}_3 $ has a decidable model.
We prove this result  by giving  a $4$-dimensional interpretation of  $    \mathsf{ID}_1  \! -   \!  \mathsf{ID}_3 $
in the first-order theory  of the real closed field $ (  \mathbb{R} , 0, 1, + , \times , \leq ) $, 
which is decidable (see Tarski  \cite{tarski1948}).
At the time,
 we were investigating whether it is possible to remove some of the axioms of $ \mathsf{D} $ and obtain a theory that is essentially undecidable. 
The possibility of interpreting $ \overline{ \mathsf{ID}  } $   in   $   \mathsf{IQ}   $
resulted from a careful investigation  of the  algebraic properties  of  $ (  \mathbb{R} , 0, 1, + , \times , \leq ) $ we need to interpret 
$     \mathsf{ID}_1  \! -   \!  \mathsf{ID}_3   $. 
Properties (I)-(VIII)  in  Figure \ref{StringsAsMatricesFigure} are sufficient  to interpret   $ \overline{ \mathsf{ID}  } $   in   $   \mathsf{IQ}   $.
Extending  $   \mathsf{IQ}   $ with    (I)-(VIII)  allows us to  reason about natural numbers in the standard way. 
In  the rest of the paper, we use the Roman numerals (I)-(VIII) to refer exclusively  to  axioms  (I)-(VIII) 
in  Figure \ref{StringsAsMatricesFigure}.

The $4$-dimensional interpretation of 
$  (   \lbrace \boldsymbol{0}, \boldsymbol{1}  \rbrace^{ * } , \varepsilon ,  \boldsymbol{0}, \boldsymbol{1}, ^\frown )  $
 in  $ (  \mathbb{N} , 0, 1 , + , \times   ) $
 we described  is a many-to-one reduction that maps existential sentences to existential sentences. 
This means  that   unsolvability of  equations over 
$  (   \lbrace \boldsymbol{0}, \boldsymbol{1}  \rbrace^{ * } , \varepsilon ,  \boldsymbol{0}, \boldsymbol{1}, ^\frown )  $
 implies unsolvability of  equations over   $ (  \mathbb{N} , 0, 1 , + , \times  ) $.
 The idea of associating 
 $  (   \lbrace \boldsymbol{0}, \boldsymbol{1}  \rbrace^{ * } , \varepsilon ,  \boldsymbol{0}, \boldsymbol{1}, ^\frown )  $
 with  $ \mathbf{SL}_2 (  \mathbb{N} ) $     dates back to A.A. Markov  \cite{Markov1954}.
According to Lothaire \cite{Lothaire2002} (see  p.~387), 
in the 1950s,    A. A. Markov hoped that  Hilbert`s 10th Problem could be solved  by proving unsolvability of word equation, 
that is,  equations over finitely generated free semigroups.
In 1970,   Yuri Matiyasevich  proved  that  Hilbert`s 10th Problem is undecidable  using a completely different method
  (see for example Davis \cite{Davis1973}). 
 In 1977, Makanin  \cite{Makanin1977}     proved that the existential theory of a finitely generated free semigroup is decidable.

\subsection{Interpretation of $  \mathsf{WD}  $ in  $   \mathsf{R}  $}
\label{CommutativeSemiringProperties0}

In this section, we show  that the isomorphism  between 
$   (   \lbrace \boldsymbol{0}, \boldsymbol{1}  \rbrace^{ * } , \varepsilon ,  \boldsymbol{0}, \boldsymbol{1}, ^\frown )  $
and    $ \mathbf{SL}_2 (  \mathbb{N} ) $ 
defines a very simple  interpretation of  $  \mathsf{WD}  $ in  $   \mathsf{R}  $.

\begin{lemma}   \label{SufficientConditionsPart0}

Let $ \tau $ be the  $ 4$-dimensional translation of $ \lbrace 0, 1, \circ  \rbrace $  in 
$ \lbrace 0,   \mathrm{S} , + , \times  \rbrace $ 
defined as follows 
\begin{itemize}

\item[-] $0$ and $ 1 $ are translated as 
$ 
  \begin{pmatrix}
1  &  0 
\\
1   &   1   
\end{pmatrix}
$, 
$
  \begin{pmatrix}
1  &  1
\\
0   &   1   
\end{pmatrix}
$, 
respectively

\item[-] $ \circ $ is translated as matrix multiplication

\item[-] the domain $J$ is the set of all $ 2 \times 2 $ matrices 
$ 
\begin{pmatrix}
x &   y  
\\
z   &    w  
\end{pmatrix}
$ 
where $ x \neq 0 $. 

\end{itemize}
Then,  $ \tau $ extends to a translation of  $ \lbrace 0, 1, \circ  ,  \preceq  \rbrace $  in 
$ \lbrace 0,   \mathrm{S} , + , \times   ,  \leq   \rbrace $ 
that  defines a $ 4$-dimensional  interpretation of   
$  \mathsf{WD}  $  in   $ \mathsf{R} $.

\end{lemma}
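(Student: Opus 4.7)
The plan is to interpret $\mathsf{WD}$ in $\mathsf{R}$ by translating $\preceq$ as left-matrix-divisibility with built-in coordinate-wise bounds. Writing $A = (a_1, a_2, a_3, a_4)$ for a $4$-tuple (and similarly for $B$, $C$), I translate equality coordinate-wise and set
\[
A \preceq^{\tau} B \;\equiv\; \exists C \bigg[\,\delta(C) \,\wedge\, \psi_{\circ}(A, C, B) \,\wedge\, \bigwedge_{i=1}^{4}\bigl(a_i \leq b_i \,\wedge\, c_i \leq b_i\bigr)\bigg],
\]
so that $A$ left-divides $B$ via some $C \in J$, with both $A$ and $C$ bounded componentwise by $B$. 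Reflexivity is covered because the identity matrix $(1, 0, 0, 1)$ lies in $J$. The translation conditions --- non-emptiness of $J$ and totality plus single-valuedness of matrix multiplication --- are routine, as is the verification of $\mathsf{WD}_1$, which reduces entrywise to specific numerical identities covered by $\mathsf{R}_1$ and $\mathsf{R}_2$. For $\mathsf{WD}_2$, I would observe externally, by induction on string length, that $\overline{\alpha}^{\tau}$ and $\overline{\beta}^{\tau}$ differ in some coordinate whenever $\alpha \neq \beta$, and then invoke $\mathsf{R}_3$ on that coordinate.

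The core of the proof is the schema $\mathsf{WD}_3$. Fix a nonempty binary string $\alpha$ and write $\overline{\alpha}^{\tau} = (\overline{b_1}, \overline{b_2}, \overline{b_3}, \overline{b_4})$ for specific numerals. For the $\Leftarrow$ direction, given $\gamma \in \mathsf{Pref}(\alpha)$, factor $\alpha = \gamma \delta$ (possibly with $\delta$ empty, in which case take $C := I$; otherwise $C := \overline{\delta}^{\tau} \in J$). The equation $\overline{\gamma}^{\tau} \cdot C = \overline{\alpha}^{\tau}$ is an instance of $\mathsf{WD}_1$. The coordinate-wise bounds on $\overline{\gamma}^{\tau}$ and on $\overline{\delta}^{\tau}$ by $\overline{\alpha}^{\tau}$ are specific numerical inequalities; they hold because an easy external induction shows that each $\overline{\zeta}^{\tau}$ has its two diagonal entries at least $1$, so left- or right-multiplication by such a matrix dominates the other factor coordinate-wise. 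Since $\mathsf{R}$ decides $\leq$ on specific numerals, each of these bounds is provable.

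The main obstacle is the $\Rightarrow$ direction. Assuming $A \preceq^{\tau} \overline{\alpha}^{\tau}$, the built-in bounds yield $a_i \leq \overline{b_i}$ and $c_i \leq \overline{b_i}$ for $i = 1, \dots, 4$. Applying $\mathsf{R}_4$ to each of these eight variables reduces the claim to a finite disjunction ranging over specific numerical tuples $(A, C)$ with $a_1 \neq 0$. For each such tuple, $\mathsf{R}$ computes $A \cdot C$ via $\mathsf{R}_1$ and $\mathsf{R}_2$ and compares the result with $\overline{\alpha}^{\tau}$ entrywise via $\mathsf{R}_3$. The tuples that pass the test are exactly those with $A = \overline{\gamma}^{\tau}$ for some $\gamma \in \mathsf{Pref}(\alpha)$ (and $C$ either $I$ or the matrix of the complementary suffix), which is precisely the disjunction on the right-hand side of $\mathsf{WD}_3$. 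The argument is combinatorially large but, for each fixed $\alpha$, fully finite and purely within the numeral fragment that $\mathsf{R}$ controls.
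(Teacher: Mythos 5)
Your high-level strategy---translate $\preceq$ as bounded left-divisibility among $2 \times 2$ matrices and use $\mathsf{R}_4$ to reduce the forward direction of $\mathsf{WD}_3$ to a finite case check---matches the paper's. But your definition of $\preceq^{\tau}$ has a concrete gap: the identity matrix $I = (1,0,0,1)$ lies in $J$ (its top-left entry is $1 \neq 0$), and $\mathsf{R}$ proves $I \preceq^{\tau} \overline{\alpha}^{\tau}$ for every nonempty binary string $\alpha$. Indeed, take $C := \overline{\alpha}^{\tau} \in J$; then $I \cdot C = \overline{\alpha}^{\tau}$ reduces to numerical identities provable from $\mathsf{R}_1$ and $\mathsf{R}_2$, and the coordinate bounds in your definition become $1 \leq b_1$, $0 \leq b_2$, $0 \leq b_3$, $1 \leq b_4$ and $c_i = b_i \leq b_i$, all provable numeral inequalities in $\mathsf{R}$ because the diagonal entries of every $\overline{\alpha}^{\tau}$ are at least $1$. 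Since $\mathsf{Pref}(\alpha)$ consists only of nonempty prefixes, $I$ is not $\overline{\gamma}^{\tau}$ for any $\gamma \in \mathsf{Pref}(\alpha)$, and $\mathsf{R}_3$ actually refutes each such equality. Thus $\mathsf{R}$ proves $I \preceq^{\tau} \overline{\alpha}^{\tau}$ while refuting $\bigvee_{\gamma \in \mathsf{Pref}(\alpha)} I = \overline{\gamma}^{\tau}$, so the translation of that instance of $\mathsf{WD}_3$ is not a theorem. In particular your claim that the tuples passing the finite check are exactly the prefix decompositions is false: the tuple $(I, \overline{\alpha}^{\tau})$ always passes.

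The paper avoids this by folding into the definition of $\preceq$ a restriction to a class $K$ of matrices satisfying $xw = 1 + yz$ and $(x,y,z,w) \neq (1,0,0,1)$, so the identity is never an admissible candidate $A$ (nor an admissible witness $C$, the case $\gamma = \alpha$ being handled by the separate disjunct $A = B$). The fix for your version is to add the conjunct $A \neq (1,0,0,1)$ (or, more in line with the paper, intersect with such a class $K$) inside $\preceq^{\tau}$. Apart from this, your use of coordinate-wise bounds $a_i \leq b_i \wedge c_i \leq b_i$, in place of the paper's single bound by a maximal entry $m(B)$, is only a stylistic variation.
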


\begin{proof}

By the axiom schemas $  \mathsf{R}_1 \equiv   \   \overline{n} + \overline{m} = \overline{n+m}  $, 
$  \mathsf{R}_2 \equiv   \   \overline{n}  \times  \overline{m} = \overline{n \times m}  $,
$\mathsf{R} $ proves the translation of each instance of  
$  \mathsf{WD}_1 \equiv   \    \overline{\alpha} \ \overline{ \beta} = \overline{ \alpha \beta}   $. 
By the axiom schema $  \mathsf{R}_3  $,   $\mathsf{R} $ proves the translation of each instance of    $  \mathsf{WD}_2 $.
It remains to give a  translation of $ \preceq $  that provably satisfies  the axiom schema 
\[
\mathsf{WD}_3 
\equiv      \     
 \forall x [  \  x \preceq     \overline{\alpha}     \leftrightarrow  
 \bigvee_{ \gamma \in   \mathsf{Pref} ( \alpha )    }  x=  \overline{ \gamma}       \          ] 
 \         .
 \]
This is where we use the axiom schema 
$
\mathsf{IQ}_3 
\equiv       \    
 \forall x  \;    [       \      x \leq \overline{n}   \leftrightarrow            \bigvee_{ k \leq n    }       x=  \overline{ k}         \        ] 
$, 
which is a theorem of  $  \mathsf{R} $.

Let 
\[
K = \lbrace 
\begin{pmatrix}
x    &   y  
\\
z   &    w
\end{pmatrix}
:    \    \    
\begin{pmatrix}
x    &   y  
\\
z   &    w
\end{pmatrix}
\neq 
\begin{pmatrix}
1    &   0
\\
0   &    1
\end{pmatrix}
\        \    
\wedge  
\          \ 
xw = 1 + yz
\     \rbrace 
\           .
\]
Let 
\[
A = \begin{pmatrix}
a_1   &   a_2  
\\
a_3  &  a_4
\end{pmatrix}
\     \   
\mbox{  and    }   
\      \   
B = \begin{pmatrix}
b_1   &   b_2  
\\
b_3  &  b_4
\end{pmatrix}
\            .
\]
Let   $ A \preceq B $ if and only if $ A, B \in K $ and 
there exists a largest  element   $ m(B)  \in \lbrace b_1 , b_2, b_3, b_4 \rbrace $  with respect to  $ \leq $    such that 
\begin{itemize}

\item[(1)]   $ A = B $ or

\item[(2)]      there exists  $ C \in K $ such that   $ a_i, c_i \leq m(B)  $ for all  $ 1 \leq i \leq 4 $   and  $ AC = B   \;  $.

\end{itemize}

Let   $  \mathsf{SL}_2 (  \mathbb{N} )^+ $ denote  $ \mathsf{SL}_2 (  \mathbb{N} ) $ minus the identity matrix. 
Assume $B$ is the translation of a variable-free  $ \mathcal{L}_{  \mathsf{BT}  }  $-term.
Then,   $ B \in \mathsf{SL}_2 (  \mathbb{N} )^+  $.
The bound in (2) tells that $ A, C  \in  \mathsf{SL}_2 (  \mathbb{N} )^+ $.
It is straightforward to verify that if $ A, B , C \in   \mathsf{SL}_2 (  \mathbb{N} )^+ $ are such that $ AC = B $, 
then a bound such as the one in (2) holds. 
It is then clear that (1)-(2) capture  what it means for a finite string to be  a prefix of another string. 
Thus,  $ \mathsf{R}  $ proves the translation of each instance of  $  \mathsf{WD}_3 $.
\end{proof}

\subsection{Interpretation of $  \overline{  \mathsf{ID}   } $ in  $   \mathsf{IQ}^{ (2)  }  $}
\label{CommutativeSemiringPropertiesI}

\begin{figure}

\[
\begin{array}{l l l }
\textup{(I)} 
  & 
\mbox{Left  distributivity }    &     \forall x y z   \;   [    \   x(y+z) = xy + xz      \     ]   
\\
\\
\textup{(II)} 
& 
\mbox{Associativity of  } +   &        \forall x y z   \;   [    \  (x+y) + z = x + (y+z)      \     ] 
\\
\\
\textup{(III)} 
& 
\mbox{Associativity of   }  \times  &      \forall x y z   \;   [    \  (xy)  z = x  (yz)      \     ]   
\\
\\
\textup{(IV)} 
 & 
\mbox{Commutativity  of   } +   &       \forall x y    \;   [    \   x+y = y+x      \     ]    
\\
\\
\textup{(V)} 
 & 
\mbox{Commutativity  of   } \times    &    \forall x y    \;   [    \   xy = y x      \     ] 
\\
\\
\textup{(VI)}
& 
\mbox{Right cancellation}  &         \forall x y z \;   [    \   x+ z = y + z \rightarrow   x = y    \     ]   
\\
\\
\textup{(VII)} 
&
\mbox{Nonnegative Elements   }    
&    
 \forall x y  \;   [    \   x+y = 0   \rightarrow     (     \    x = 0       \;  \wedge  \;   y = 0    \     )     \        ]    
\\
\\
\textup{(VIII)} 
&
\mbox{No Zero Divisors  }   &    
 \forall x y  \;   [    \     x y = 0 \rightarrow   (    \   x= 0  \;   \vee   \;   y = 0    \    )           \         ]    
\             .
\end{array}
\]

\caption{
Algebraic properties we need in order  to interpret   $ \overline{  \mathsf{ID} } $  in $ \mathsf{IQ} $. 
}
\label{StringsAsMatricesFigure}
\end{figure}

Let $ \mathsf{IQ}^{ (2) } $ be  $ \mathsf{IQ} $ extended with axioms (I)-(VIII) in  Figure   \ref{StringsAsMatricesFigure}. 
We can reason in  $ \mathsf{IQ}^{ (2) } $  about natural numbers in the standard way 
and will therefore occasionally   not refer explicitly to the axioms of  $ \mathsf{IQ}^{ (2) } $  we use. 
In this section,   we show that  $   \overline{  \mathsf{ID}  }  $  is interpretable  in   $ \mathsf{IQ}^{ (2) } $.

We start by making a few simple observations: 
\begin{itemize}

\item  Axiom (IV) tells us that addition is commutative. 
Hence, by $  \mathsf{Q}_4 $,  $0$ is an additive identity. 
That is, 
$ \mathsf{IQ}^{ (2) }   \vdash       \forall x  \;   [    \   0 + x = x     \;   \wedge   \;   x + 0  = x   \      ]       $.

\item   Recall that  $  1  =    \mathrm{S} 0 $.  
By  $ \mathsf{Q}_7 $  and  $ \mathsf{Q}_6 $
\[
 x 1 =   x  0  + x = 0 + x = x 
 \     .
 \]
 Since axiom (V) tells us that multiplication is commutative, $1$ is a multiplicative identity. 
 That is, 
 $ \mathsf{IQ}^{ (2) }   \vdash     \forall x  \;   [    \   1x = x     \;   \wedge   \;   x1 = x   \      ]     $.

 \item  Axiom (VI) tells us that addition is right-cancellative. 
 Since addition is commutative, it is also left-cancellative. 
  That is
 \[
  \mathsf{IQ}^{ (2) }   \vdash         \forall x y z \;   [    \   z+ x =   z+ y \rightarrow   x = y    \     ]      
  \        .
  \]

\item   By  $  \mathsf{Q}_6 $  and  (V),    
$   \mathsf{IQ}^{ (2) }   \vdash      \forall x   \;   [    \  x0 = 0   \;   \wedge   \;   0x = 0    \     ]    $.

\end{itemize}

\begin{lemma} \label{SufficientConditions}

Let $ \tau $ be the  $ 4$-dimensional translation of $ \lbrace 0, 1, \circ  \rbrace $  in 
$ \lbrace 0,   \mathrm{S} , + , \times  \rbrace $ 
defined as follows 
\begin{itemize}

\item[-] $0$ and $ 1 $ are translated as 
$ 
  \begin{pmatrix}
1  &  0 
\\
1   &   1   
\end{pmatrix}
$, 
$
  \begin{pmatrix}
1  &  1
\\
0   &   1   
\end{pmatrix}
$, 
respectively

\item[-] $ \circ $ is translated as matrix multiplication

\item[-] the domain $J$ is the set of all $ 2 \times 2 $ matrices 
$ 
\begin{pmatrix}
x &   y  
\\
z   &    w  
\end{pmatrix}
$ 
where $ x \neq 0 $. 

\end{itemize}
Then,  $ \tau $ extends to a translation of  $ \lbrace 0, 1, \circ  ,  \preceq  \rbrace $  in 
$ \lbrace 0,   \mathrm{S} , + , \times   ,  \leq   \rbrace $ 
that  defines a $ 4$-dimensional  interpretation of   
$   \overline{  \mathsf{ID}  }  $  in   $ \mathsf{IQ}^{ (2) } $.

\end{lemma}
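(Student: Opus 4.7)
The plan is to reuse, verbatim, the translation from the proof of Lemma~\ref{SufficientConditionsPart0}: the same domain $J$ (matrices with nonzero top-left entry), the same matrix encoding of $\circ$, and the same bounded-divisibility translation of $\preceq$. Since $\mathsf{IQ}$ extends $\mathsf{Q}$, the verification of $\mathsf{ID}_4$ from that proof goes through unchanged, as it relies only on $\mathsf{IQ}_3$ and the provable arithmetic of numerals in $\mathsf{Q}$. What is new in the present lemma, and what needs checking, is that the \emph{universal} axioms of $\overline{\mathsf{ID}}$, namely $\mathsf{ID}_1, \mathsf{ID}_2, \mathsf{ID}_3, \overline{\mathsf{ID}}_5, \overline{\mathsf{ID}}_6$, hold under the translation for arbitrary elements of $J$ and not merely for translated biterals. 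This is precisely where axioms (I)--(VIII) enter essentially.

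The first step is to confirm the domain conditions. Both $M_0$ and $M_1$ have top-left entry $1 \neq 0$, so they lie in $J$. For closure of $J$ under multiplication, if $A = (a,b,c,d)$ and $B = (a',b',c',d')$ are in $J$, then $a, a' \neq 0$; by axiom (VIII), $aa' \neq 0$; by the contrapositive of (VII), $(AB)_{11} = aa' + bc' \neq 0$. Associativity of matrix multiplication, i.e.\ $\mathsf{ID}_1$, is the standard algebraic calculation, using (I), (II), (III), together with the right distributive law derivable from (I) via (IV) and (V). The remaining universal axioms are verified by entrywise inspection: for $\mathsf{ID}_2$, reading off $AM_0 = (a+b,\,b,\,c+d,\,d)$ and $BM_0 = (a'+b',\,b',\,c'+d',\,d')$ and using the cancellation law (VI) twice yields $A = B$; the cases of $M_1$ and of $\overline{\mathsf{ID}}_5$ (where, e.g., $M_0 A = (a,\,b,\,a+c,\,b+d)$) are symmetric; for $\mathsf{ID}_3$, if $AM_0 = BM_1$ with $A, B \in J$, then $a+b = a'$ and $b = a'+b'$, hence $a + b' = 0$ by (VI) and then $a = 0$ by (VII), contradicting $A \in J$; and $\overline{\mathsf{ID}}_6$ is analogous, now using the left-column entries to force $a' = 0$.

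The main work, and the main obstacle, lies in $\mathsf{ID}_4$, which is handled essentially as in Lemma~\ref{SufficientConditionsPart0}. After translating $A \preceq B$ as ``$A, B$ lie in the non-identity, determinant-one subclass of $J$, and either $A = B$ or there is a witness $C$ in that subclass with entries of $A$ and $C$ bounded by the maximum entry $m(B)$ of $B$ such that $AC = B$'', the $\Leftarrow$ direction uses the external identity $\overline{\gamma}\cdot\overline{\delta} = \overline{\gamma\delta}$ (proved by induction on $\delta$ via the arithmetic of numerals) and the fact that entries of $\overline{\gamma}, \overline{\delta}$ are bounded by those of $\overline{\alpha}$. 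For the $\Rightarrow$ direction, one starts with $A \preceq \overline{\alpha}$ witnessed by some $C$; since $m(\overline{\alpha})$ is a concrete numeral, $\mathsf{IQ}_3$ forces each of the eight entries of $A$ and $C$ to equal one of finitely many numerals. The equation $AC = \overline{\alpha}$ then expands into a finite disjunction of equations between numerals, and using (VI), (VII), (VIII) one verifies that the only surviving solutions are the pairs $(\overline{\gamma}, \overline{\delta})$ with $\gamma\delta = \alpha$. The case analysis is bulky but mechanical; it goes through precisely because $\mathsf{IQ}^{(2)}$ has exactly enough arithmetic to perform the required computations on the concrete numerals appearing in $\overline{\alpha}, \overline{\gamma}, \overline{\delta}$.
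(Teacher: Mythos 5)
Your proposal is correct and follows essentially the same route as the paper's proof: verify closure of the domain $J$ under matrix multiplication using (VII)/(VIII), verify the universal axioms $\mathsf{ID}_1$–$\mathsf{ID}_3$, $\overline{\mathsf{ID}}_5$, $\overline{\mathsf{ID}}_6$ entrywise using (I)–(VI), and reuse the $\preceq$-translation from Lemma~\ref{SufficientConditionsPart0} for $\mathsf{ID}_4$. The only cosmetic differences are a slip of phrasing ($\mathsf{IQ}$ does not literally extend $\mathsf{Q}$, though $\mathsf{IQ}$ still proves the needed numeral arithmetic) and that you spell out the $\mathsf{ID}_4$ check in somewhat more detail than the paper, which simply cites the earlier lemma.
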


\begin{proof}

We verify that $J$ satisfies the domain condition.
It is clear that  $ 0^{ \tau }  , 1^{ \tau }    \in J $. 
It remains to verify that $J$ is closed under matrix multiplication. 
Let 
\[
A = \begin{pmatrix}
a_1   &      a_2 
\\
a_3   &   a_4  
\end{pmatrix}
,   \     \      \   
B = \begin{pmatrix}
b_1   &      b_2 
\\
b_3   &   b_4  
\end{pmatrix}
,   \     \      \  
AB =  \begin{pmatrix}
a_1 b_1 + a_2 b_3    &      a_1 b_2 + a_2 b_4  
\\
a_3 b_1 + a_4 b_3    &       a_3 b_2 + a_4 b_4
\end{pmatrix}
\]
where $ a_1, b_1 \neq 0 $. 
We need to show that $ a_1 b_1 + a_2 b_3   \neq 0 $. 
Axiom  (VIII) tells us that models of  $ \mathsf{IQ}^{ (2) } $ do not have zero divisors. 
Hence, $ a_1 b_1 \neq 0 $. 
Axiom (VII) tells us that  $0$  is the only element with  an additive inverse. 
Hence, $ a_1 b_1 + a_2 b_3     \neq 0 $, which implies $ AB \in J $.
Thus,  $J$ is closed under matrix multiplication.

It is straightforward to verify that (I)-(V) suffice  to prove that matrix multiplication is associative. 
Thus,  $ \mathsf{IQ}^{ (2) } $ proves the translation of $ \mathsf{ID}_1 $.

Next, we show that the translation of   $ \mathsf{ID}_2$ and  $  \overline{ \mathsf{ID}  }_5 $ are  theorems of  $ \mathsf{IQ}^{ (2) } $. 
We need to show that 
\begin{enumerate}

\item[(1)]   $  \forall A, B \in J  \;     [          \     
 (   \     A 0^{ \tau } =   B 0^{ \tau }    \;  \vee  \;     0^{ \tau }  A =   0^{ \tau }   B  \    ) 
\rightarrow  A = B    \      ]   
$

\item[(2)]  $  \forall A, B \in J  \;     [          \     
 (   \     A 1^{ \tau } =   B 1^{ \tau }    \;  \vee  \;     1^{ \tau }  A =   1^{ \tau }   B  \    ) 
\rightarrow  A = B    \      ]   
$.

\end{enumerate}
We  verify (1). 
First, we show that $  \forall A, B \in J  \;     [          \      A 0^{ \tau } =   B 0^{ \tau }     \rightarrow  A = B    \      ]   $. 
Assume  $ x, a   \neq 0  $ and 
\[
\begin{pmatrix}
x+ y   & y   
\\
z +w    &  w 
\end{pmatrix}
= 
\begin{pmatrix}
x  & y   
\\
z   &  w 
\end{pmatrix}
 0^{ \tau }
= 
\begin{pmatrix}
a  & b   
\\
c   &  d
\end{pmatrix}
 0^{ \tau }
 = 
 \begin{pmatrix}
a + b   & b   
\\
c + d    &  d
\end{pmatrix}
\       .
\]
We need to show that $ x = a$ and $ z = c $.
We have 
\[
x+ b  = x + y = a+ b  
\    \wedge     \  
z+d =  z + w =  c +d 
\     .
\]
Since addition is right-cancellative,   $ x = a $ and $ z = c $.
Thus,  for all  $   A, B \in J   $,    if  $  A 0^{ \tau } =   B 0^{ \tau }    $,   then $  A = B       $.

We show that   $  \forall A, B \in J  \;     [          \         0^{ \tau }  A =   0^{ \tau }   B       \rightarrow  A = B    \      ]     $. 
Assume $ x, a  \neq 0 $ and 
\[
\begin{pmatrix}
x    &      y   
\\
x + z     &    y +  w 
\end{pmatrix}
=
 0^{ \tau }
\begin{pmatrix}
x  & y   
\\
z   &  w 
\end{pmatrix}
= 
 0^{ \tau }
\begin{pmatrix}
a  & b   
\\
c   &  d
\end{pmatrix}
= 
\begin{pmatrix}
a   &     b
\\
a+c     &  b+d
\end{pmatrix}
\          .
\]
We need to show that $ z = c $ and $ w = d $.
We have 
\[
a + z = x + z   =   a + c 
\     \wedge    \  
b + w  = y + w   = b + d 
\      .
\]
Since addition is left-cancellative,   $ z = c $ and $ w = d $. 
Thus,  for all $  A, B \in J $, if   $   0^{ \tau }  A =   0^{ \tau }   B      $,   then $ A = B $. 
Hence, (1) holds. 
By similar reasoning, (2) holds. 
Thus, $ \mathsf{IQ}^{ (2) } $ proves the translation of   $ \mathsf{ID}_2$ and  $  \overline{ \mathsf{ID}  }_5 $.

We show that the translation of   $ \mathsf{ID}_3$ is a theorem of  $ \mathsf{IQ}^{ (2) } $. 
We need to show that    $  \forall A, B \in J  \;     [          \        A 0^{ \tau }  \neq    B 1^{ \tau }      \      ]      $.
Assume for the sake of a contradiction $ x,  a \neq 0 $ and 
\[
\begin{pmatrix}
x+ y   & y   
\\
z +w    &  w 
\end{pmatrix}
= 
\begin{pmatrix}
x  & y   
\\
z   &  w 
\end{pmatrix}
 0^{ \tau }
= 
\begin{pmatrix}
a  & b   
\\
c   &  d
\end{pmatrix}
 1^{ \tau }
= 
\begin{pmatrix}
a   &   a + b   
\\
c   &  c + d   
\end{pmatrix}
\          .
\]
Then 
\[
a  = x + y   = x + a + b 
\]
where we have omitted parentheses since addition is associative. 
Since $0$ is an additive identity and addition is commutative, 
$ 0 + a = x + b + a  $. 
Since addition is right-cancellative,   $ 0 = x + b $. 
Since $0$ is the only element with an addititive inverse, $ x = 0 $, which contradicts the assumption that $ x \neq 0 $. 
Thus,  $ \mathsf{IQ}^{ (2) } $ proves the translation of   $ \mathsf{ID}_3$.

We show that the translation of     $  \overline{ \mathsf{ID}  }_6  $ is a   theorem of  $ \mathsf{IQ}^{ (2) } $. 
We need to show that   $  \forall A, B \in J  \;     [          \     0^{ \tau }  A   \neq     1^{ \tau }   B  \      \      ]   $.
Assume for the sake of a contradiction $ x,  a \neq 0 $ and 
\[
\begin{pmatrix}
x   &    y   
\\
x+z    &  y+w 
\end{pmatrix}
= 
 0^{ \tau }
\begin{pmatrix}
x  & y   
\\
z   &  w 
\end{pmatrix}
= 
 1^{ \tau }
\begin{pmatrix}
a  & b   
\\
c   &  d
\end{pmatrix}
= 
\begin{pmatrix}
a+c   &   b + d   
\\
c   &     d   
\end{pmatrix}
\        .
\]
Then, $ x = a+c    = a + x + z  $.
Hence, $ 0 = a + z $. 
Since $0$ is the only element with an addititive inverse,    $ a = 0 $,    which contradicts the assumption that $ a \neq 0 $. 
Thus,  $ \mathsf{IQ}^{ (2) } $ proves the translation of   $     \overline{ \mathsf{ID}  }_6    $.

Finally, we translate $ \preceq $ as in the proof of Lemma \ref{SufficientConditionsPart0}.
\end{proof}

\section{ Interpretation of  $ \mathsf{TC}   $    in   $ \mathsf{Q} $  }
\label{RecursionFreeInterpretationOfTCinQ}

\begin{figure}
\[
\begin{array}{r l }
&{\large \textsf{The Axioms of } \mathsf{ TC }^{ \varepsilon }       }
\\
\\
\mathsf{TC }_1^{ \varepsilon }   
& 
\forall x  \;  [       \      \varepsilon   x = x   \;   \wedge   \;   x  \varepsilon  =   x   \        ]  
\\
\mathsf{TC }_2^{ \varepsilon }   
& 
\forall x y z \;  [    \   x(yz) = (xy)z   \     ]  
\\
  \mathsf{TC }_3^{ \varepsilon }  
&
\forall x y z w  \;    [          \        \  xy = zw   \rightarrow      \exists u  \;   [      \    
 (     \        z= xu  \wedge uw = y                 \              ) \vee 
 \\ 
 & 
 \qquad    \qquad     \qquad      \qquad    \qquad      \qquad  
(         \            x = zu \wedge uy = w  \               )                \                   ]           \         ] 
\\
 \mathsf{ TC }_4^{ \varepsilon }    
&
 0 \neq \varepsilon 
\\
 \mathsf{ TC }_5^{ \varepsilon }  
&
\forall x y  \;         [        \           xy = 0    \rightarrow    (     \    x = \varepsilon  \;   \vee  \;  y = \varepsilon    \     )      \         ] 
\\
 \mathsf{ TC }_6^{ \varepsilon }  
&
 1 \neq \varepsilon 
\\
 \mathsf{ TC }_7^{ \varepsilon }  
&
\forall x y  \;         [        \           xy = 1    \rightarrow    (     \    x = \varepsilon  \;   \vee  \;  y = \varepsilon    \     )      \         ] 
\\
 \mathsf{ TC }_8^{ \varepsilon }   
&
0 \neq 1
\end{array}
\]

\caption{
Non-logical axioms of the first-order theory   $ \mathsf{TC}^{ \varepsilon }    $.
}
\label{AxiomsOfTCWithIdentityElement}
\end{figure}

In this section, we show that  our interpretation of  $   \overline{ \mathsf{ID}   }  $ in   $   \mathsf{IQ}   $  extends in a natural way  to an interpretation of   $ \mathsf{TC} $ in   $   \mathsf{Q}   $.
Instead of interpreting      $ \mathsf{TC} $, 
we interpret the variant   $ \mathsf{TC}^{ \varepsilon }  $  where we extend the language of  $ \mathsf{TC} $ with a constant symbol 
$ \varepsilon $ for the identity element. 
See Figure  \ref{AxiomsOfTCWithIdentityElement} for the axioms of    $ \mathsf{TC}^{ \varepsilon }  $.
We choose to work with $ \mathsf{TC}^{ \varepsilon }  $ because  the identity matrix  is naturally present in our interpretation of 
$   \overline{ \mathsf{ID}   }  $ in   $   \mathsf{IQ}   $ 
and because we get a more compact form of the editor axiom
($ \mathsf{TC}_2 $  and  $ \mathsf{TC}_3^{ \varepsilon } $). 
The interpretation we give can be turned into an interpretation of  $ \mathsf{TC} $  by simply removing the identity matrix from the domain 
(see Appendix A of Visser  \cite{visser} for  mutual interpretability of  $ \mathsf{TC} $ and $ \mathsf{TC}^{ \varepsilon }  $).

Recall that  $ x \leq_{ \mathsf{l} }   y  \equiv   \   \exists  r  \;   [   \   r + x = y      \   ]   $. 
Let $   x <_{ \mathsf{l} }   y  \equiv   \   \exists  r  \;   [     \   r \neq 0  \;  \wedge   \;    r + x = y      \       ]    $.
Let $ \mathsf{Q}^{ (2) } $ be $ \mathsf{Q} $ extended with axioms  (I)-(VI) in  Figure   \ref{StringsAsMatricesFigure} and  the trichotomy law 
\[
\forall x y  \;   [    \     x  <_{ \mathsf{l} }  y   \;   \vee   \;      x = y       \;   \vee   \;   y  <_{ \mathsf{l} }  x    \      ] 
\       .
\]
We make a few simple observations: 
\begin{itemize}

\item[-] Axiom (VII)    $ \forall x y  \;   [    \   x+y = 0   \rightarrow     (     \    x = 0       \;  \wedge  \;   y = 0    \     )     \        ]   $
is a theorem of  $ \mathsf{Q}^{ (2) } $. 
Indeed, assume $ x+y = 0 $. 
If $ y = 0 $, then $ x = 0 $ by $  \mathsf{Q}_4 $. 
Thus, it suffices to show that $ y = 0 $. 
Assume for   the sake of a contradiction  that $ y \neq 0 $. 
Then, by  $  \mathsf{Q}_3 $, there exists $ v $ such that $ y = \mathrm{S} v $. 
By $  \mathsf{Q}_5 $
\[
 0 = x+y   = x +  \mathrm{S} v =  \mathrm{S} ( x+v ) 
 \]
 which contradicts $ \mathsf{Q}_2 $. 
 Thus,  $ x+y = 0 $ implies $ x =y = 0 $.

\item[-] Axiom (VIII)   $  \forall x y  \;   [    \     x y = 0 \rightarrow   (    \   x= 0  \;   \vee   \;   y = 0    \    )           \         ]   $
is a theorem of  $ \mathsf{Q}^{ (2) } $. 
Indeed, assume  $ xy = 0$  and $  y \neq 0 $. 
By $  \mathsf{Q}_3 $, there exists  $v $ such that  $ y = \mathrm{S} v $. 
By $  \mathsf{Q}_7 $
\[
0 = xy = x v + x    
\]
which implies $ x = 0$. 
Thus, $ xy = 0 $ implies $ x= 0  \;  \vee  \;  y = 0 $.

\item[-]  $  \mathsf{Q}^{ (2) } $ proves that   $1 $ is the only element with a multiplicative inverse. 
Indeed, assume  $ xy = 1$. 
By commutativity of multiplication,  $  \mathsf{Q}_2 $ and $  \mathsf{Q}_5 $, we have $ x, y  \neq  0 $. 
Hence, by  $  \mathsf{Q}_3 $,  there  exist $ u, v $ such that   $ x = \mathrm{S} u $  and  $ y = \mathrm{S} v $. 
By commutativity of multiplication,   $  \mathsf{Q}_7 $  and $  \mathsf{Q}_5 $
\[
1 =  x y =   x v + x  =  \mathsf{S} (xv + u) 
\     \wedge     \   
1 =  yx  =   yu + y   =  \mathsf{S} ( yu  + v) 
\       .
\]
By  $  \mathsf{Q}_1 $
\[
0 =  xv + u
\     \wedge     \   
0  =  yu  + v 
\]
which implies $ u = v = 0 $. 
Hence, $ x = y = 1 $. 
Thus, $ xy = 1 $ implies $ x = y = 1 $.

\end{itemize}

This section as structured as follows: 
In Section  \ref{PredecessorLemmaSection}, 
we show that  that if we modify our interpretation of  $ \overline{ \mathsf{ID} } $ in  $ \mathsf{IQ}^{ (2) } $  by choosing as the domain the class $K$ of all $ 2 \times  2$ matrices with determinant one, we obtain an interpretation in  $ \mathsf{Q}^{ (2) } $ of the theory we obtain from 
$  \mathsf{TC}^{ \varepsilon } $ 
by replacing the editor axiom   $  \mathsf{TC}^{ \varepsilon }_2 $  with the axioms 
$  \mathsf{D}_2 $,  $  \mathsf{D}_3 $,   $  \overline{  \mathsf{ID} }_5   $,  $  \overline{  \mathsf{ID} }_6   $, 
 $   \forall x  \;   [    \   x =  \varepsilon   \;   \vee   \;    \exists  y  \;   [     \    x = y0   \;   \vee   \;    x = y1    \      ]    $.
In  Section  \ref{InterpretationOfTCInQ2},
we extend our interpretation of $ \overline{ \mathsf{ID} } $ in  $ \mathsf{IQ}^{ (2) } $ 
to an interpretation of $ \mathsf{TC}^{ \varepsilon  } $ in   $ \mathsf{Q}^{ (2) } $   
by restricting $K$  to a subclass on which the editor axiom holds. 
Finally, in Section  \ref{CommutativeSemiringPropertiesIII}, 
we show that we can interpret $  \mathsf{Q}^{ (2) } $ in $ \mathsf{Q}  $ by  restricting the universe of   $ \mathsf{Q} $ to a suitable subclass.

\subsection{Atoms and Predecessors }
\label{PredecessorLemmaSection}

Let $K$ denote the class  all $ 2 \times 2 $ matrices with determinant $ 1$. 
That is 
\[
K = \lbrace    \begin{pmatrix}
x  &  y
\\
z   &   w
\end{pmatrix}     :  
\     xw = 1 + yz   
\     \rbrace 
\         .
\]

It is not difficult to verify that  $ \mathsf{Q}^{ (2) } $ proves that $ \det (AB) = 1 $ if  $ \det (A) = \det (B) = 1 $.
We thus have the following lemma.

\begin{lemma}\label{InterpreationOfTCLemmaDomainInitialClass}

$  \mathsf{Q}^{ (2) } $ proves that   $K$ is closed under 
$ 
  \begin{pmatrix}
1  &  0 
\\
0   &   1   
\end{pmatrix}
$,
$ 
  \begin{pmatrix}
1  &  0 
\\
1   &   1   
\end{pmatrix}
$,  
$
  \begin{pmatrix}
1  &  1
\\
0   &   1   
\end{pmatrix}
$
and matrix multiplication.

\end{lemma}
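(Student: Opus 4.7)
The plan is to verify each claim by direct computation inside $\mathsf{Q}^{(2)}$, relying on the commutative-semiring axioms (I)--(V), the right cancellation axiom (VI), together with the additional facts about $0$ and $1$ already established at the start of Section \ref{CommutativeSemiringPropertiesI}.

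First I would check the three generators one by one. For the identity matrix we have $1 \cdot 1 = 1$ and $1 + 0 \cdot 0 = 1 + 0 = 1$, so $xw = 1 + yz$ holds. For $\begin{pmatrix}1 & 0 \\ 1 & 1\end{pmatrix}$ we compute $1 \cdot 1 = 1 = 1 + 0 = 1 + 0 \cdot 1$, and for $\begin{pmatrix}1 & 1 \\ 0 & 1\end{pmatrix}$ we compute $1 \cdot 1 = 1 = 1 + 0 = 1 + 1 \cdot 0$. Each of these requires only $\mathsf{Q}_4$, $\mathsf{Q}_6$, and the observation that $0$ is an additive identity (established in the discussion preceding Lemma \ref{SufficientConditions}).

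The main work is closure under matrix multiplication. Let $A = \begin{pmatrix} a_1 & a_2 \\ a_3 & a_4 \end{pmatrix}$ and $B = \begin{pmatrix} b_1 & b_2 \\ b_3 & b_4 \end{pmatrix}$ be matrices in $K$, so that $a_1 a_4 = 1 + a_2 a_3$ and $b_1 b_4 = 1 + b_2 b_3$. Writing $AB$ as in the proof of Lemma \ref{SufficientConditions}, I need to show
\[
(a_1 b_1 + a_2 b_3)(a_3 b_2 + a_4 b_4) \; = \; 1 + (a_1 b_2 + a_2 b_4)(a_3 b_1 + a_4 b_3).
\]
Expanding both sides using left distributivity (I), commutativity of multiplication (V), and associativity (II), (III), (IV), the two sides contain a common summand $a_1 a_3 b_1 b_2 + a_2 a_4 b_3 b_4$ which can be removed by repeated appeals to the right-cancellation axiom (VI). After cancellation the required identity reduces to
\[
a_1 a_4 b_1 b_4 + a_2 a_3 b_2 b_3 \; = \; 1 + a_1 a_4 b_2 b_3 + a_2 a_3 b_1 b_4.
\]
Now substituting $a_1 a_4 = 1 + a_2 a_3$ and $b_1 b_4 = 1 + b_2 b_3$ and expanding via left distributivity, both sides reduce to $1 + a_2 a_3 + b_2 b_3 + (a_2 a_3)(b_2 b_3) + (a_2 a_3)(b_2 b_3)$ up to associativity and commutativity of $+$, so equality follows.

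The only subtle point---and what I expect to be the main obstacle---is bookkeeping: because $\mathsf{Q}^{(2)}$ has no subtraction, the classical one-line identity $\det(AB) = \det(A)\det(B)$ must be re-expressed as an equality of sums and obtained by cancelling common terms using axiom (VI). So rather than a conceptual difficulty, the challenge is organising the expansion so that every use of distributivity, commutativity, and cancellation is justified by the listed axioms, and so that no implicit appeal to additive inverses sneaks in.
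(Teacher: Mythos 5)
Your proof is correct and takes the same approach the paper sketches (the paper merely asserts ``It is not difficult to verify that $\mathsf{Q}^{(2)}$ proves that $\det(AB) = 1$ if $\det(A) = \det(B) = 1$''). Your expansion, the identification of the common summand $a_1 a_3 b_1 b_2 + a_2 a_4 b_3 b_4$ to be removed by right cancellation (VI), and the final substitution of $a_1 a_4 = 1 + a_2 a_3$ and $b_1 b_4 = 1 + b_2 b_3$ all check out; the only detail left implicit is that the right distributive law used in the expansion is obtained from (I) via commutativity (V), which you already invoke.
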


Let us  say that $ A \in K $ is an atom in  $K$ if for all $ B, C \in K $,   $ A = BC $ implies that one of $B$ and $C$ is the identity matrix. 
The proof of the following lemma is straightforward.

\begin{lemma}  \label{InterpreationOfTCLemmaAtoms}

$ \mathsf{Q}^{ (2) }    $ proves that 
$ 
  \begin{pmatrix}
1  &  0 
\\
0   &   1   
\end{pmatrix}
$, 
 $ 
  \begin{pmatrix}
1  &  0 
\\
1   &   1   
\end{pmatrix}
$
and 
$
  \begin{pmatrix}
1  &  1
\\
0   &   1   
\end{pmatrix}
$
are atoms in $K$. 

\end{lemma}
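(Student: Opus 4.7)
The plan is to prove, in $\mathsf{Q}^{(2)}$, the atomicity of each of the three matrices by pure case analysis on the entries. Let $B = \begin{pmatrix} b_1 & b_2 \\ b_3 & b_4 \end{pmatrix}$ and $C = \begin{pmatrix} c_1 & c_2 \\ c_3 & c_4 \end{pmatrix}$ be elements of $K$, so that $b_1 b_4 = 1 + b_2 b_3$ and $c_1 c_4 = 1 + c_2 c_3$. For any of the three target matrices $A$, I expand $BC = A$ into four entry equations and extract as much information as possible using three tools: axiom (VII), which tells us that a sum $xy + uv = 0$ of products forces $xy = uv = 0$; axiom (VIII), which then forces a factor to vanish; and the determinant equations, which give a contradiction whenever a diagonal entry $b_1$, $b_4$, $c_1$, or $c_4$ is shown to be $0$ (since then $0 = 1 + (\mbox{something}) $ contradicts axiom (VII) together with $\mathsf{Q}_2$ applied to $1 = \mathrm{S}0$).

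First I would handle the identity matrix. Here both off-diagonal equations $b_1 c_2 + b_2 c_4 = 0$ and $b_3 c_1 + b_4 c_3 = 0$ split by (VII) into vanishing products. Since $b_1$, $b_4$, $c_1$, $c_4$ cannot be $0$ (by the determinant contradiction above), we deduce $b_2 = c_2 = b_3 = c_3 = 0$. The determinant equations then give $b_1 b_4 = c_1 c_4 = 1$, and since $\mathsf{Q}^{(2)}$ proves $1$ is the only element with a multiplicative inverse (as observed right before the lemma), $b_1 = b_4 = c_1 = c_4 = 1$, so $B = C = I$.

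Next I would do the two atom cases uniformly. For $BC = 0^\tau$, the equation $b_1 c_2 + b_2 c_4 = 0$ combined with the determinant trick forces $b_2 = c_2 = 0$, hence $b_1 = b_4 = 1$; the $(1,1)$-entry gives $c_1 = 1$, the determinant of $C$ then forces $c_4 = 1$, and the remaining $(2,1)$-equation reduces to $b_3 + c_3 = 1$. The analogous analysis for $1^\tau$ reduces to $b_2 + c_2 = 1$. So the crux becomes the subclaim: if $x + y = 1$ in $\mathsf{Q}^{(2)}$, then $x = 0$ or $y = 0$.

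Finally I would prove this subclaim as follows: by $\mathsf{Q}_3$, either $x = 0$ (done), or $x = \mathrm{S}u$ for some $u$. In the latter case, commutativity of addition (axiom (IV)) and $\mathsf{Q}_5$ give
\[
\mathrm{S}0 = 1 = \mathrm{S}u + y = y + \mathrm{S}u = \mathrm{S}(y + u),
\]
so $\mathsf{Q}_1$ yields $y + u = 0$, and axiom (VII) gives $y = 0$. Plugging $b_3 = 0$ or $c_3 = 0$ (resp.\ $b_2 = 0$ or $c_2 = 0$) back into the previously determined entries shows that in each subcase exactly one of $B$, $C$ equals the identity. The whole argument is essentially bookkeeping; the only conceptually nontrivial step is the $x + y = 1 \Rightarrow x = 0 \vee y = 0$ subclaim, which is the main obstacle in the sense that it is the only place where we genuinely use $\mathsf{Q}_3$ rather than purely equational reasoning.
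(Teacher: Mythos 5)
Your proof is correct, and it fills in the details that the paper omits (the paper only remarks that ``the proof of the following lemma is straightforward''). The key points all check out in $\mathsf{Q}^{(2)}$: axiom (VII) splits a zero off-diagonal entry $b_i c_j + b_k c_\ell = 0$ into two vanishing products; the determinant condition $xw = 1 + yz$ rules out a vanishing diagonal entry because $0 = 1 + yz$ would give $1 = 0$ by (VII) against $\mathsf{Q}_2$; axiom (VIII) then kills the off-diagonal factor; and the preliminary observation that $1$ is the only element with a multiplicative inverse pins down the remaining diagonal entries. The one step you flag as ``conceptually nontrivial,'' namely $x + y = 1 \Rightarrow x = 0 \vee y = 0$, is indeed the crux of the $0^\tau$ and $1^\tau$ cases, and your derivation of it from $\mathsf{Q}_3$, $\mathsf{Q}_5$, commutativity (IV), $\mathsf{Q}_1$, and (VII) is sound. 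This is exactly the kind of entry-by-entry bookkeeping argument the lemma calls for, and I see no gaps.
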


In   \cite{MurwanashyakaAML}, we introduce a theory  $  \mathsf{BTQ}  $  and show that  $  \mathsf{D} $ interprets $  \mathsf{Q} $ by showing that it interprets $  \mathsf{BTQ} $.
We obtain   $  \mathsf{BTQ}  $ from $  \mathsf{ID} $ by replacing  $  \mathsf{ID}_4 $ with the axiom 
$   \forall x  \;   [    \   x = 0  \;   \vee   \;  x = 1     \;   \vee   \;   \exists  y  \;   [     \    x = y0   \;   \vee   \;    x = y1    \      ]    $.
The next lemma shows that  if we modify  the translation in Lemma \ref{SufficientConditions}  by choosing as the domain
the class of  all elements in $K$ distinct from the identity matrix, 
we obtain  an interpretation of  $  \mathsf{BTQ}  $ in   $ \mathsf{Q}^{ (2) }    $.

\begin{lemma} \label{InterpreationOfTCLemmaPredecessors}

Let  $ A \in K  $.
Then,  $ \mathsf{Q}^{ (2) }    $ proves that 
$A$ is the identity matrix or 
  that there exist   $ B, C  \in K $ such that 
$ A = BC $ and $C$ is one of   
$ 
  \begin{pmatrix}
1  &  0 
\\
1   &   1   
\end{pmatrix}
$,  
$
  \begin{pmatrix}
1  &  1
\\
0   &   1   
\end{pmatrix}
$.

\end{lemma}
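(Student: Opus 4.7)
Write $A = \begin{pmatrix} x & y \\ z & w \end{pmatrix}$, so that $xw = 1 + yz$ and hence, by axiom (VIII) and the fact that $1+k$ is never $0$, neither $x$ nor $w$ equals $0$. My plan is to apply the trichotomy law twice, comparing $y$ with $x$ and $w$ with $z$. The entire proof reduces to the following claim: provably in $\mathsf{Q}^{(2)}$, either $A$ is the identity matrix, or $y \leq_{\mathsf{l}} x$ together with $w \leq_{\mathsf{l}} z$, or $x \leq_{\mathsf{l}} y$ together with $z \leq_{\mathsf{l}} w$.

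Granted this claim, the construction is routine. In the second alternative, choose $a$ and $c$ with $a + y = x$ and $c + w = z$, and set $B = \begin{pmatrix} a & y \\ c & w \end{pmatrix}$. A direct computation using $\mathsf{Q}_4$, $\mathsf{Q}_6$ and left distributivity (I) gives $B \cdot 0^{\tau} = A$. Substituting $x = a+y$ and $z = c+w$ into $xw = 1 + yz$, expanding by (I), and cancelling $yw$ via right cancellation (VI) yields $aw = 1 + yc$, so $B \in K$. The third alternative is entirely symmetric: take $b$, $d$ with $x + b = y$ and $z + d = w$, set $B = \begin{pmatrix} x & b \\ z & d \end{pmatrix}$, and read off $xd = 1 + bz$ together with $B \cdot 1^{\tau} = A$.

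The heart of the argument, and the main obstacle, is establishing the three-way disjunction above. The two configurations left open by a naive application of the two trichotomies are $y <_{\mathsf{l}} x$ with $z <_{\mathsf{l}} w$, and $x \leq_{\mathsf{l}} y$ with $w <_{\mathsf{l}} z$. For the first, write $a + y = x$ and $z + m = w$ with $a, m \neq 0$. Substituting into $xw = 1 + yz$, expanding via (I) and right distributivity (which follows from (I) and (V)), and cancelling $yz$ using (VI) produces
\[
az + am + ym = 1 .
\]
Now $am \neq 0$ by (VIII), so $am = \mathrm{S}k$ for some $k$ by $\mathsf{Q}_3$; then using commutativity of $+$ and $\mathsf{Q}_5$, the displayed equation rewrites as $\mathrm{S}(k + az + ym) = \mathrm{S}0$, whence $\mathsf{Q}_1$ and (VII) force $k = 0$ and $az = ym = 0$. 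Applying (VIII) together with $a, m \neq 0$ gives $z = y = 0$; the earlier remark that $1$ is the only element with a multiplicative inverse then gives $a = m = 1$, so $x = w = 1$ and $A$ is the identity matrix.

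For the second configuration, write $x + b = y$ and $w + m = z$ with $m \neq 0$. Substituting into $xw = 1 + yz$, expanding, and cancelling $xw$ yields
\[
 0 = 1 + xm + bw + bm ,
\]
and since $1 + k = \mathrm{S}k$ (by $\mathsf{Q}_4$, $\mathsf{Q}_5$ and commutativity of $+$), this contradicts $\mathsf{Q}_2$. The remaining two combinations of the two trichotomies (one strict, the other an equality) already lie inside the second or third alternative of the key claim, with the relevant addend equal to $0$, and require no further analysis. This completes the plan.
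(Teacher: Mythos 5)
Your proof is correct and takes essentially the same approach as the paper's: apply the trichotomy law to compare the two entries within each row of $A$, and in each resulting configuration either construct the predecessor matrix $B$ by subtraction, show $A$ is the identity, or derive a contradiction. The only difference is organizational — you package the nine trichotomy cases into a clean three-way disjunction and then verify the two genuinely problematic configurations, whereas the paper works through all nine cases (labelled (1a)--(3d)) one at a time.
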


\begin{proof}

Let $ K  \ni  A = \begin{pmatrix}
a  &   b 
\\
c  &   d   
\end{pmatrix}
$.
By the trichotomy law, we have the following cases
\begin{itemize}

\item[-]  (1a)   $ a = b \;   \wedge   \;   c   <_{ \mathsf{l} }    d $,  
(1b)    $  a = b    \;   \wedge   \;     c = d $, 
(1c)     $ a = b      \;   \wedge   \;    d    <_{ \mathsf{l} }   c $

\item[-]   (2a)  $  a <_{ \mathsf{l} }  b    \;   \wedge   \;     c = d $, 
(2b)     $ b <_{ \mathsf{l} }  a   \;   \wedge   \;      c = d $

\item[-]  (3a)   $ b <_{ \mathsf{l} } a    \;   \wedge   \;        c <_{ \mathsf{l} }  d  $, 
(3b)    $   a <_{ \mathsf{l} }  b     \;   \wedge   \;        d <_{ \mathsf{l} } c  $, 
(3c)      $  b <_{ \mathsf{l} }  a       \;   \wedge   \;      d <_{ \mathsf{l} }  c   $, 
(3d)   $ a <_{ \mathsf{l} }  b    \;   \wedge   \;            c <_{ \mathsf{l} }  d         \;        $.

\end{itemize}

We consider   Case  (1a).
Since  $ a = b \;   \wedge   \;   c   <_{ \mathsf{l} }    d $,    let  $ d = r + c $ where $ r \neq 0 $. 
Since $  ad = 1 + bc  $  as $  A  \in K $,    we have  
\[
ar +  ac =  a (r+c)  = ad = 1 + bc   = 1 + ac
 \]
Since addition is right-cancellative,    $ ar = 1 $,    which implies $ a = r =  1$. 
 Thus
 \[
  A = \begin{pmatrix}
 1    &   1
 \\
 c   &     1+c 
 \end{pmatrix}
 = 
  \begin{pmatrix}
 1    &   0
 \\
 c   &     1 
 \end{pmatrix}
  \begin{pmatrix}
 1    &   1
 \\
 0   &     1
 \end{pmatrix}
 \         .
\]

We consider Case (1b). 
Since  $  a = b    \;   \wedge   \;     c = d $ and $ A \in K $,  we have 
\[
 ad = 1+  bc  = 1+ ad
 \       .
 \]
 Since $0$ is an additive identity and addition is right-cancellative,    $ 0 = 1 $   which contradicts $ \mathsf{Q}_2     $.

We consider Case (1c). 
Since   $ a = b      \;   \wedge   \;    d    <_{ \mathsf{l} }   c $,    let  $ c = s + d $ where $ s \neq  0 $.
We have   
\[
 ad = 1 + bc   = 1 + a (s+d ) = 1 + as + ad  
 \          .
 \]
Hence, $ 0 = 1+ as $. 
Since addition is commutative, $ 0 = \mathrm{S} ( as + 0 )  $  by $  \mathsf{Q}_5 $,     which contradicts $ \mathsf{Q}_2     $.

We consider Case (2a).
Since    $  a <_{ \mathsf{l} }  b    \;   \wedge   \;     c = d $,    let    $ b = r + a $ where $ r \neq 0 $. 
We have 
\[
ad = 1+ bc  = 1 +  (r+a) d  = 1 +  rd + ad 
\        .
\]
Hence, $ 0  = 1 + rd $ which contradicts $ \mathsf{Q}_2    $.

We consider Case (2b).
Since      $ b <_{ \mathsf{l} }  a   \;   \wedge   \;      c = d $,   let    $ a = s + b $ where $ s \neq 0 $.
We have 
\[
sd + bd  = 
(s+b) d  =
ad = 1 +bc   = 1 + bd 
\        .
\]
Hence, $ sd = 1 $ which implies $ s = d = 1 $. 
Thus
\[
A = \begin{pmatrix}
 1 +b    &   b
 \\
 1   &     1
 \end{pmatrix}
 = 
 \begin{pmatrix}
 1   &   b
 \\
 0   &     1
 \end{pmatrix}
 \begin{pmatrix}
 1     &   0
 \\
 1   &     1
 \end{pmatrix}
 \        .
\]

We consider Case (3a). 
Since   $ b <_{ \mathsf{l} } a    \;   \wedge   \;        c <_{ \mathsf{l} }  d  $, 
there exist $ r , s \neq 0 $ such that $ a = r + b $ and $ d = s + c $. 
Since $ ad = 1 + bc $,    we have 
\[
rs + rc + bs + bc =  (r+b) ( s+c)  = ad = 1 + bc  
\]
which implies   
\[
  rs + rc + bs  = 1   
  \        .
  \]
Since $ r, s  \neq 0 $,  we conclude that   $ r = s = 1 $ and $  b = c = 0 $. 
Thus,  $  A = \begin{pmatrix}
1  &    0  
\\
0    &  1  
\end{pmatrix}
$.

We consider Case (3b). 
Since  $   a <_{ \mathsf{l} }  b     \;   \wedge   \;        d <_{ \mathsf{l} } c  $, 
there exist  $ p, q \neq 0 $ such that    $ b = p + a $ and $ c = q + d $.   
Since $ ad = 1 + bc $,    we have 
\[
ad = 1 + bc  = 1 +  (p+a) (q+d)  
= 1 + pq + pd + aq + ad
\          .
\]
Hence,  $ 0 =   1 + pq + pd + aq $  which contradicts $ \mathsf{Q}_2     $.

We consider Case  (3c).
Since  $  b <_{ \mathsf{l} }  a       \;   \wedge   \;      d <_{ \mathsf{l} }  c   $, 
\[
A =   \begin{pmatrix}
a  &   b 
\\
c  &   d   
\end{pmatrix}
= 
E 
\begin{pmatrix}
1  &   0
\\
1  &   1 
\end{pmatrix}
\      \       \     
\mbox{  where }
E =    \begin{pmatrix}
a-b  &   b 
\\
c-d  &   d   
\end{pmatrix}   
\         .
\]
Since addition is right-cancellative in $ \mathsf{Q}^{ (2) } $, 
we write $ a-b $ and $ c-d $  for the unique elements $r, s $ such that $ a = r + b $ and $ c = s + d $.
We need to show that $ E   \in K   $. 
That is, we need to show that   $  \det (E ) = 1 $. 
First, observe that 
\[
 (x-y) z = xy - yz 
  \  
   \mbox{ and  }   
     \  
      ( 1 + xz ) - y z  = 1 + ( xz - yz) 
     \] 
  since 
\[
 (x-y) z  + yz  =    \big(   (x-y) + y  \big)  z    =  x z 
 \  
   \mbox{ and  }   
     \  
 1 + ( xz - yz ) + yz = 1 + xz 
 \        .
\]
Since $ \det (A) = 1 $,  we have 
\[
(a-b) d = ad - bd = ( 1+ bc)  -bd  = 1 + b ( c-d) 
\       .
\]
Thus,   $  E \in K $.

We consider  Case (3d). 
Since   $ a <_{ \mathsf{l} }  b    \;   \wedge   \;            c <_{ \mathsf{l} }  d          $
\[
A =   \begin{pmatrix}
a  &   b 
\\
c  &   d   
\end{pmatrix}
= 
G
\begin{pmatrix}
1  &   1
\\
0  &   1 
\end{pmatrix}
\      \       \     
\mbox{  where }
G  =   \begin{pmatrix}
a  &   b -a
\\
c  &   d  -c
\end{pmatrix}
\]
We need to show that $ G  \in K   $. 
That is, we need to show that   $ \det (G) = 1 $.
Since $ \det (A) = 1 $, we have 
\[
a (d-c) = ad - ac  = 1 + bc - ac = 1 + (b-a) c 
\       .
\]
Thus,  $G \in K $.
\end{proof}

\subsection{Interpretation of $  \mathsf{TC}^{ \varepsilon }   $ }
\label{InterpretationOfTCInQ2}

We are finally ready to  extend our interpretation of  $ \overline{  \mathsf{ID} } $ in $  \mathsf{IQ}^{ (2) } $ 
to an interpretation of  $  \mathsf{TC}^{ \varepsilon  }   $ in   $  \mathsf{Q}^{ (2) } $.
All we need to do is to restrict the class $K$  to a subclass on which  the editor axiom holds.

\begin{theorem}

There exists a class $I$ such that the the $ 4$-dimensional translation of 
$ \lbrace   \varepsilon , 0 , 1, \circ  \rbrace $  in $ \lbrace 0, 1, S, + , \times  \rbrace $ 
defined by 
\begin{itemize}

\item[-] $ \varepsilon $,  $0$ and $ 1 $ are translated as 
$ 
  \begin{pmatrix}
1  &  0 
\\
0   &   1   
\end{pmatrix}
$, 
$ 
  \begin{pmatrix}
1  &  0 
\\
1   &   1   
\end{pmatrix}
$, 
$
  \begin{pmatrix}
1  &  1
\\
0   &   1   
\end{pmatrix}
$, 
respectively

\item[-] $ \circ $ is translated as matrix multiplication

\item[-] the domain   is $I$ 
\end{itemize}
 defines a $ 4$-dimensional  interpretation of   
$  \mathsf{TC}^{ \varepsilon }   $  in   $ \mathsf{Q}^{ (2) } $.

\end{theorem}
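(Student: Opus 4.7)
The plan is to start from the class $K$ of determinant-one matrices supplied by Lemma \ref{InterpreationOfTCLemmaDomainInitialClass} and cut it down to a subclass $I$ on which every axiom of $\mathsf{TC}^{\varepsilon}$ is provable in $\mathsf{Q}^{(2)}$, much as the class $J$ in Lemma \ref{TheClassJ} was cut out of the ambient universe of $\mathsf{ID}^{(4)}$. Associativity ($\mathsf{TC}_2^{\varepsilon}$) is automatic for matrix multiplication once (I)--(V) are available, and the identity axiom $\mathsf{TC}_1^{\varepsilon}$ is automatic because $\varepsilon^{\tau}$ is literally the $2\times 2$ identity matrix. The separation axioms $\mathsf{TC}_4^{\varepsilon}$, $\mathsf{TC}_6^{\varepsilon}$, $\mathsf{TC}_8^{\varepsilon}$ reduce to checking that the three chosen matrices are pairwise distinct, which is immediate.

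The axioms $\mathsf{TC}_5^{\varepsilon}$ and $\mathsf{TC}_7^{\varepsilon}$, which say that $0$ and $1$ are atoms, are delivered by Lemma \ref{InterpreationOfTCLemmaAtoms}: indeed, any factorisation $AB=0^{\tau}$ or $AB=1^{\tau}$ inside $K$ must have one of the factors equal to $\varepsilon^{\tau}$. For the $\overline{\mathsf{ID}}$-style axioms $\mathsf{ID}_2, \mathsf{ID}_3, \overline{\mathsf{ID}}_5, \overline{\mathsf{ID}}_6$ (which are consequences of the editor axiom in $\mathsf{TC}^{\varepsilon}$) the computations already carried out in the proof of Lemma \ref{SufficientConditions} go through verbatim, since the determinant-one condition is strictly stronger than the condition used there to define $J$.

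The main obstacle, as expected, is the editor axiom $\mathsf{TC}_3^{\varepsilon}$: given $AB=CD$ with $A,B,C,D \in I$, one must produce $U\in I$ with either $C=AU \wedge UD=B$ or $A=CU \wedge UB=D$. In the standard model this is Levi's Lemma for the free monoid generated by $0^{\tau}$ and $1^{\tau}$, and in principle $U$ is obtained as $A^{-1}C$ or $C^{-1}A$ (which lies in $\mathsf{SL}_{2}(\mathbb{Z})$ by determinant reasons); the difficulty is that $\mathsf{Q}^{(2)}$ neither has subtraction nor can perform the induction needed to argue that the resulting matrix has natural-number entries. My proposal is to define $I$ to consist of those $A \in K$ that carry a verifiable ``right-history'' of atomic decompositions in the style of the class $J$ of Lemma \ref{TheClassJ}: namely, $A \in I$ iff every right-factor of $A$ inside $K$ lies in $K$, admits the right-atom decomposition of Lemma \ref{InterpreationOfTCLemmaPredecessors} with the rightmost atom uniquely determined, and satisfies the symmetric (transposed) decomposition supplying a unique leftmost atom. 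These conditions are definable without reference to sequences and, by the arguments already present in Section 3, are downward closed under the right/left-factor relation in $K$.

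With $I$ defined this way, I would then prove the editor axiom by the atom-matching method. Given $AB=CD$ in $I$, peel the rightmost atom $a$ off of $B$ (if $B \neq \varepsilon^{\tau}$) using Lemma \ref{InterpreationOfTCLemmaPredecessors} and similarly off of $D$; uniqueness of the rightmost atom inside $I$ (together with $\mathsf{ID}_3$ and $\overline{\mathsf{ID}}_6$ already available on $K$) forces the two atoms to coincide, after which right cancellation delivered by $\mathsf{ID}_2$-analogues lets us shrink to a smaller equation. The crucial point is that this ``induction'' is not an external induction in $\mathsf{Q}^{(2)}$ but is absorbed into the very definition of $I$: the good behaviour of every right factor of an element of $I$ is part of membership in $I$. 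Finally, as in the proof of Lemma \ref{AtomsLemma}, I would pass from $I$ to the subclass of those $u \in I$ with $vu \in I$ for all $v \in I$ to secure closure under matrix multiplication without destroying the editor property. The conjunction of these steps yields the desired four-dimensional interpretation of $\mathsf{TC}^{\varepsilon}$ in $\mathsf{Q}^{(2)}$.
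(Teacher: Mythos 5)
Your high-level plan --- cut $K$ down so that the editor axiom becomes provable --- and your handling of the identity, associativity, separation, and atom axioms all agree with the paper's proof, which does exactly this. The gap is in how you propose to secure the editor axiom $\mathsf{TC}_3^\varepsilon$.

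You define $I$ by a decomposition-uniqueness condition on right factors and then want to establish the editor axiom by repeatedly peeling off rightmost atoms, claiming that the induction is ``absorbed into the very definition of $I$''. It is not. Membership in your $I$ records only local, one-step decomposition facts; the peeling argument must run for an unbounded number of iterations, each producing a strictly shorter equation, and $\mathsf{Q}^{(2)}$ has no induction to drive it. Nothing in your definition of $I$ lets you conclude that the process terminates. The paper sidesteps this by making the editor property \emph{itself} the membership condition of an intermediate class: it sets
\[
H = \bigl\{\, W \in K :\ \forall X\,Z\ \forall Y\in K\ \bigl[\,XY=ZW \rightarrow \exists U\in K\ \bigl[(Z=XU\wedge UW=Y)\vee(X=ZU\wedge UY=W)\bigr]\,\bigr]\,\bigr\}\, ,
\]
so that every $W\in H$ already satisfies the editor axiom with $W$ in rightmost position. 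The atoms lie in $H$ by the predecessor lemma and the cancellation/disjointness properties ($\mathsf{PD}$, $\mathsf{RC}$, $\mathsf{DJ}$), and closure of $H$ under multiplication is a \emph{fixed} two-step case split (apply $W_1\in H$, then in one branch apply $W_0\in H$) rather than an unbounded peel. The final domain is then $I=\{A\in H:\ \forall B\,[\,B\preceq_K A\rightarrow B\in H\,]\}$, the downward closure of $H$ under left divisibility $\preceq_K$; that extra quantifier is what lets one promote the witness $U$ from $K$ to $I$ in the verification of the translated editor axiom, since $U$ always comes out as a $\preceq_K$-predecessor of $Y$ or of $W$, which are already in $I$. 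Your final ``pass to a subclass to secure closure under multiplication'' is then unnecessary: the downward closure $I$ is already multiplicatively closed by a short argument using $H$-closure. If you replace your decomposition-uniqueness condition by the editor property defining $H$ and then take the $\preceq_K$-downward closure, the rest of your outline becomes the paper's proof.
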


\begin{proof}

Let
\[
K = \lbrace    \begin{pmatrix}
x  &  y
\\
z   &   w
\end{pmatrix}     :  
\     xw = 1 + yz   
\     \rbrace 
\         .
\]
Lemma  \ref{SufficientConditions} and  Lemma \ref{InterpreationOfTCLemmaAtoms}  tell us that the  restriction of axioms 
 $  \mathsf{TC}^{ \varepsilon }_1 \! -    \!      \mathsf{TC}^{ \varepsilon }_2 $,  
$  \mathsf{TC}^{ \varepsilon }_4 \! -    \!      \mathsf{TC}^{ \varepsilon }_8 $
to $K$  are theorems of   $ \mathsf{Q}^{ (2) } $. 
Since   $  \mathsf{TC}^{ \varepsilon }_1 \! -    \!      \mathsf{TC}^{ \varepsilon }_2 $,  
$  \mathsf{TC}^{ \varepsilon }_4 \! -    \!      \mathsf{TC}^{ \varepsilon }_8 $ 
are universal sentences, 
to interpret    $  \mathsf{TC}^{ \varepsilon } $  in $   \mathsf{Q}^{ (2) }    $, 
it suffices to restrict the  class $K$ to a subclass $I$ on which the editor axiom  $  \mathsf{TC}^{ \varepsilon }_2 $ holds. 
We need the following three  properties that are given by 
 Lemma  \ref{SufficientConditions} and   Lemma    \ref{InterpreationOfTCLemmaPredecessors} 
 \[
 \begin{array}{r  l }
   \mathsf{DJ}
  &   
   \forall A , B \in K   \;      \Big[     \   
    A      \begin{pmatrix}
1  &  0 
\\
1   &   1   
\end{pmatrix}     
\neq   
B   \begin{pmatrix}
1      &      1
\\
0    &   1   
\end{pmatrix}      
\           \Big] 
\\
   \\
  \mathsf{RC}
  &   
   \forall A , B \in K   \;      \Big[     \   A  \neq B  \rightarrow  
   \\
   & 
   \quad    \quad
(    \    A      \begin{pmatrix}
1  &  0 
\\
1   &   1   
\end{pmatrix}     
\neq   
B   \begin{pmatrix}
1  &  0 
\\
1   &   1   
\end{pmatrix}      
\      \wedge     \   
A     \begin{pmatrix}
1  &  1
\\
0   &   1   
\end{pmatrix} 
\neq 
B    \begin{pmatrix}
1  &  1
\\
0   &   1   
\end{pmatrix}
\      )         \         \Big]
\\
\\
 \mathsf{PD} 
 &  
 \forall A  \in K   \;    \Big[    \    
 A  =       \begin{pmatrix}
1  &  0 
\\
0   &   1   
\end{pmatrix}
\    \vee    \  
\\  
&    
\quad  \quad  
\exists B  \in K   \;   \big[      \   
A = B      \begin{pmatrix}
1  &  0 
\\
1   &   1   
\end{pmatrix}
\     \vee     \  
A   =   B       \begin{pmatrix}
1  &  1
\\
0   &   1   
\end{pmatrix}
\         \big]           \        \Big]  
 \end{array}
 \]

Let 
\begin{multline*}
H =  \lbrace   W   \in K   :    \       \forall X  Z     \;     \forall Y   \in  K     \big[          \  
      \  XY = ZW   \rightarrow      \exists U  \in K   \;   [      \    
\\
 (     \        Z = XU       \;   \wedge    \;        UW = Y                \              ) \vee 
(         \            X = ZU    \;   \wedge    \;      UY = W            \               )                \                   ]           \         \big] 
\     \rbrace 
\        .
\end{multline*}
It follows from $  \mathsf{DJ} $, $  \mathsf{RC} $,  $ \mathsf{PD} $ and associativity of matrix multiplication  that 
$ 
  \begin{pmatrix}
1  &  0 
\\
0   &   1   
\end{pmatrix}
$, 
$ 
  \begin{pmatrix}
1  &  0 
\\
1   &   1   
\end{pmatrix}
$, 
$
  \begin{pmatrix}
1  &  1
\\
0   &   1   
\end{pmatrix}
$
are elements of $H$. 
We show that $H$ is closed under matrix multiplication. 
So, assume $ W_0 , W_1 \in H $. 
We need to show that $ W_0 W_1 \in H $. 
First, we observe that $ W_0 W_1 \in K $ since   $K$ is closed under matrix multiplication and $ H \subseteq K $.
Now, let $ X, Y, Z $ be such that $ XY = Z W_0 W_1 $ and $ Y \in K $. 
Since $ W_1 \in H $, we have the following two cases for some $ U_1 \in K $
\[
(1)    \        X = Z W_0 U_1   \;   \wedge   \;   U_1 Y = W_1
\;    ,   \      \      \     
(2)    \      Z W_0  = X U_1  \;   \wedge   \;   U_1 W_1  = Y  
\        .
\]
We consider (1). 
Since $K$ is closed under matrix multiplication and $ H \subseteq K $,   we have  
\[
X = Z W_0 U_1            \   \wedge   \   W_0 U_1 Y =   W_0 W_1 
\           \wedge        \     
W_0 U_1  \in K 
\        .
\tag{*}
\]
We consider (2). 
Since $ W_0 \in  H $ and $ U_1  \in K $, we have one of the following two cases for some $ U_0 \in K $
\begin{multline*}
\textup{ (2a) }   \    Z = X U_0   \;   \wedge   \;  U_0 W_0  = U_1       \;   \wedge   \;   U_1 W_1  = Y  
,    \      \       \    
\\
\textup{ (2b) }  \      X = Z U_0   \;   \wedge   \;  U_0 U_1 = W_0      \;   \wedge   \;   U_1 W_1  = Y  
\         .
\end{multline*}
In case of (2a), we have 
\[
Z = X U_0      \   \wedge   \       U_0 W_0 W_1  =    U_1  W_1    =  Y 
\    \wedge    \   
U_0   \in K 
\        .
\tag{**}
\]
In case of (2b), we have 
\[
X =   Z U_0   \   \wedge   \   W_0  W_1   =     U_0 U_1  W_1  =  U_0  Y  
\    \wedge    \   
U_0   \in K 
\        .
\tag{***}
\]
By (*),  (**) and (***), we have $ W_0  W_1 \in H $.
Thus,  $H$ is closed under matrix multiplication.

We are finally ready to specify the class $I$. 
Let 
\[
I = \lbrace A  \in H  :   \   \forall   B  \;   [    \   B    \preceq_{ K }   A    \rightarrow   B \in H     \     ]  
\    \rbrace
\]
where   
\[
B  \preceq_{ K  }   A     \equiv   \     \exists  C  \in K   \;   [     \     A =  B C      \      ]      
\             .
\]
It follows from  Lemma    \ref{InterpreationOfTCLemmaAtoms}   that 
$ 
  \begin{pmatrix}
1  &  0 
\\
0   &   1   
\end{pmatrix}
$, 
$ 
  \begin{pmatrix}
1  &  0 
\\
1   &   1   
\end{pmatrix}
$, 
$
  \begin{pmatrix}
1  &  1
\\
0   &   1   
\end{pmatrix}
$
are elements of $ I $. 
To show that $I$ defines  a model of $   \mathsf{TC}^{ \varepsilon } $, 
 it suffices to show that $ I$ is closed under  matrix  multiplication  and downward closed under  $ \preceq_{ K  }  $, 
 where the latter ensures that the editor axiom holds restricted to $I$.

We show that $I$ is closed under matrix multiplication.
Assume $ A_0 , A_1 \in I $. 
We need to show that $ A_0 A_1 \in I $. 
So, assume $ B C = A_0 A_1 $  where  $ C \in K $. 
We need to show that $ B \in H $. 
Since $ A_1 \in I \subseteq H $ and $ C \in K $, we have one of the following cases for some $ U \in K $
\[
\textup{ (i) }    \    A_0 = B  U   \;  \wedge  \;  U A_1 = C 
\;   ,     \      \      \   
\textup{ (ii) }    \ B = A_0 U   \;   \wedge   \;   U C = A_1  
\          .
\]
In case of (i) we have $  B \preceq_{ K }  A_0 $ which implies $ B \in H $ since $ A_0 \in I $. 
In case of (ii) we have $  U  \preceq_{ K }  A_1 $ which implies $ U \in H $ since $ A_1 \in I $.  
Since $H$ is closed under matrix multiplication and $ A_0 \in I \subseteq H$, we have  $ B = A_0 U  \in H $.
Hence, $ A_0 A_1 \in I $. 
Thus, $ I $ is closed under matrix multiplication.

We show that $ I $ is downward closed under $ \preceq_{ K } $. 
So, assume $ B   \preceq_{ K } A $ where $ A \in I $. 
We need to show that $ B \in I $.
That is, we need to show that  $ B \in H $ and  $ \forall  D  \preceq_{ K }   B \;   [   \    D  \in H  \   ]   $.
Since $ A \in I $ and   $ B   \preceq_{ K } A $, it follows from the definition of $I$ that $ B \in H $. 
Assume now $ B = DC $ where $ C \in K $.
We need to show that $ D \in H $. 
Since $ B   \preceq_{ K } A $, there exists $E \in K $ such that $ A = B E $. 
Hence, $ DC E = BE  = A $. 
Since $ C, E \in K $ and $K$ is closed under matrix multiplication, $ CE \in K $. 
Hence, $ D   \preceq_{ K } A $.
Then, $ D \in H $ since $ A \in  I $. 
Thus, $ I$ is  downward closed under   $ \preceq_{ K } $. 
\end{proof}

\section{Commutative Semirings I}
\label{CommutativeSemiringPropertiesII}

We complete   our proof of interpretability of  $ \overline{  \mathsf{ID} }  $    in    $   \mathsf{IQ}   $ 
by  showing that     $ \mathsf{IQ} $ and $ \mathsf{IQ}^{ (2) } $  are mutually interpretable.

\begin{theorem}  \label{CommutativeSemiring}

 $ \mathsf{IQ} $ and  $ \mathsf{IQ}^{ (2) } $  are mutually interpretable. 

\end{theorem}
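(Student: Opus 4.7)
The direction $\mathsf{IQ} \leq \mathsf{IQ}^{(2)}$ is trivial via the identity translation, since $\mathsf{IQ}^{(2)}$ is by definition an extension of $\mathsf{IQ}$.

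For the nontrivial direction $\mathsf{IQ}^{(2)} \leq \mathsf{IQ}$, I plan to follow the standard cut-shortening strategy of Hájek--Pudlák that was already invoked in Lemma \ref{LemmaClosureUnderAdditionAndMultiplication}, and construct within $\mathsf{IQ}$ a domain $K$ such that the relativization to $K$ of every axiom of $\mathsf{IQ}^{(2)}$ is a theorem of $\mathsf{IQ}$. The idea is to build $K$ as the last member of a descending chain $X_{0} \supseteq X_{1} \supseteq \cdots \supseteq X_{m}$ of inductive classes (each containing $0$ and closed under $\mathrm{S}$), where each $X_{i+1}$ is obtained from $X_{i}$ by selecting those elements satisfying a new semiring identity with quantifiers ranging over $X_{i}$, and where an induction internal to $\mathsf{IQ}$ shows that $X_{i+1}$ is again inductive.

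The identities I plan to establish one after the other are the customary ones: first $0 + x = x$ and $\mathrm{S}x + y = \mathrm{S}(x+y)$ (the commuted versions of $\mathsf{Q}_{4}$ and $\mathsf{Q}_{5}$, which do not come for free in $\mathsf{IQ}$), then commutativity and associativity of $+$, then $0 \cdot x = 0$ and $\mathrm{S}x \cdot y = xy + y$, then left distributivity, then associativity and commutativity of $\times$, then right cancellation of $+$, then the absence of additive inverses $x+y=0 \to x=y=0$, and finally the absence of zero divisors. Once associativity of $+$, associativity of $\times$, and left distributivity have been proved on some $X_{i}$, Lemma \ref{LemmaClosureUnderAdditionAndMultiplication} can be invoked to ensure that subsequent cuts also remain closed under $+$ and $\times$, so that the domain condition is preserved. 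Every numeral $\overline{n}$ lies in each $X_{i}$ by a trivial induction, so when $K := X_{m}$ the axiom schema $\mathsf{IQ}_{3}$ relativized to $K$ follows immediately from the unrelativized $\mathsf{IQ}_{3}$ of $\mathsf{IQ}$, and the universal axioms $\mathsf{Q}_{1}, \mathsf{Q}_{2}, \mathsf{Q}_{4}$--$\mathsf{Q}_{7}$ restrict to $K$ automatically.

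The main obstacle is routine but tedious bookkeeping: for each new identity $\phi_{i+1}$ one must verify both that $0$ satisfies $\phi_{i+1}$ with quantifiers over $X_{i}$ and that the hypothesis $x \in X_{i+1}$ implies $\mathrm{S}x \in X_{i+1}$, using only the axioms of $\mathsf{IQ}$ together with the identities already forced on $X_{i}$. Each such step is a straightforward case analysis based on the corresponding recursion clause of $\mathsf{Q}_{5}$ or $\mathsf{Q}_{7}$, but one has to be careful that the order in which the cuts are taken supplies exactly the identities needed at each induction step. Once the chain of cuts is in place, verifying the relativized axioms (I)--(VIII) on $K$ is immediate from the definition of $K$, and the proof is complete.
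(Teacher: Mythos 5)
Your proposal follows essentially the same cut-shortening strategy as the paper: build a descending chain of inductive subclasses, each forcing one more semiring identity with quantifiers restricted to the previous stage, verify at each stage that $0$ belongs and that the stage is closed under $\mathrm{S}$, and finish by intersecting down to a subclass closed under $+$ and $\times$. The one place your sketch misstates the mechanics is the claim that once Lemma~\ref{LemmaClosureUnderAdditionAndMultiplication} is invoked ``subsequent cuts also remain closed under $+$ and $\times$''---in fact a further restriction can destroy closure under the operations, so the correct order (which the paper uses) is to take \emph{all} the identity cuts first and only then pass to the $+$-closed and $\times$-closed subclasses; with that reordering your argument matches the paper's proof.
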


\begin{proof}

Since $  \mathsf{IQ}^{ (2) } $ is an extension of  $ \mathsf{IQ} $,  
we only  need to show that   $  \mathsf{IQ}^{ (2) } $ is interpretable  in $ \mathsf{IQ} $.
Our strategy is to first restrict the universe of $ \mathsf{IQ} $ to  an inductive class $N_2 $ which is such that each  of  the axioms  (I)-(VIII) in Figure  \ref{StringsAsMatricesFigure}  holds on $N_2 $ when we restrict quantification  to $N_2$ and  treat addition and multiplication as partial functions.
Recall that a  class $X$ is inductive if $ 0 \in X $ and $ \forall x \in X  \;   [    \   \mathrm{S} x \in X    \    ]   $.
Now, since the axioms of  $  \mathsf{IQ}^{ (2) } $ are all universal sentences, to interpret  $  \mathsf{IQ}^{ (2) } $ in $ \mathsf{IQ} $,
it suffices  to relativize quantification to a subclass $N$ of $N_2$ that is closed under   $  0, \mathrm{S} , +, \times  $.

We start by  restricting the universe of $ \mathsf{IQ} $ to a subclass $N_0 $  where 
$0$ is the only element with an additive inverse, 
and addition is associative and right-cancellative. 
Let  $  u  \in N_0 $ if and only if 
\begin{itemize}

\item[(1)]       $    0  + u = u      $

\item[(2)]       $   \forall x  \;   [    \   x+u = 0   \rightarrow   (   \   x=0  \;  \wedge  \;  u = 0    \     )       \     ]      $

\item[(3)]       $ \forall x   \;   [    \    \mathrm{S}  x  +  u = \mathrm{S}   ( x + u )    \     ]  $

\item[(4)]  $  \forall x y  \;   [    \    (x+y ) + u = x + (y+u )    \    ]   $

\item[(5)]  $  \forall x y    \;   [    \  x + u = y + u \rightarrow  x= y    \    ]        \;  $

\item[(6)]  $      0 u = 0        \;  $.

\end{itemize}

We verify  that $ 0 \in N_0 $. 
We need to show that $0$ satisfies (1)-(6). 
By  $ \mathsf{Q}_4 \equiv   \   \forall  x \;   [    \  x+0  = x   \   ]   $,  
$0$ satisfies (1)-(5). 
By $ \mathsf{Q}_6 \equiv   \   \forall  x \;   [    \  x0  = 0   \   ]   $, 
$0$ satisfies (6).
Thus, $ 0 \in N_0 $.

We verify that $ N_0 $ is closed under   $   \mathrm{S}  $.
Let $ u \in N_0 $. 
We need to show that $  \mathrm{S}   u \in  N_0 $. 
That is, we need to show that $ \mathrm{S}   u $ satisfies (1)-(6).
We have 
\[
0 + \mathrm{S}   u = \mathrm{S}   (0 + u ) = \mathrm{S}   u  = \mathrm{S}   u + 0 
\]
 where the first   equality holds  by  
 $ \mathsf{Q}_5 \equiv   \   \forall  x  y  \;   [    \  x+ \mathrm{S}   y  = \mathrm{S}   (x+y )   \   ]   $, 
 the second  equality holds since $u$ satisfies (1)  and the last equality holds by $   \mathsf{Q}_4 $. 
Thus,   $  \mathrm{S}  u $ satisfies (1).

By $ \mathsf{Q}_5   $  and  $ \mathsf{Q}_2   \equiv   \   \forall  x    \;   [    \     \mathrm{S}    x  \neq  0   \   ]   $
\[
 x +  \mathrm{S}  u =  \mathrm{S}  (x+u ) \neq 0 
 \           .
 \]
Thus,    $   \mathrm{S}  u $ satisfies (2).

We have 
\[
 \mathrm{S}   x +    \mathrm{S}    u = \mathrm{S}  (   \mathrm{S}   x + u ) 
 =   \mathrm{S}    \mathrm{S}   (x+u)  
=  \mathrm{S}    ( x +   \mathrm{S}   u ) 
 \]
 where the first equality holds by  $ \mathsf{Q}_5  $, 
 the second equality  holds since  $u$ satisfies (3) 
 and the last equality  holds by  $ \mathsf{Q}_5  $. 
 Thus,    $ \mathrm{S}  u $  satisfies (3).

We have 
\[
(x+y ) +     \mathrm{S}    u =     \mathrm{S}     ( (x+y) + u )  
= 
  \mathrm{S}     ( x + (y+u) ) 
= 
x + ( y +      \mathrm{S}     u ) 
\]
where the first equality holds by  $ \mathsf{Q}_5  $, 
 the second equality holds since  $u$ satisfies (4), 
and the last equality  holds by  $ \mathsf{Q}_5  $.  
Thus,  $\mathrm{S}  u $  satisfies (4).

We have 
\[
  \mathrm{S}     ( x+u )  = x +     \mathrm{S}    u = y +     \mathrm{S}    u =      \mathrm{S}   ( y+u ) 
\  \Rightarrow    \  
x+u = y+u 
\   \Rightarrow    \  
x + y  
\]
where the first implication follows from 
$ \mathsf{Q}_1   \equiv   \   \forall xy \;   [    \    \mathrm{S}    x =    \mathrm{S}   y   \rightarrow  x = y     \   ]  $, 
and the last implication follows from the assumption that $u $ satisfies (5). 
Thus, $ \mathrm{S}  u $ satisfies (5).

By   $ \mathsf{Q}_7  \equiv   \ \forall x y \;  [    \  x   \times    \mathrm{S}   y =   x  \times  y  + x       \        ]  $
\[
0   \times    \mathrm{S}   u = 0u + 0 = 0 + 0 = 0
\]
where the second  equality follows from the assumption that $u$  satisfies (6) and the last equality holds by $ \mathsf{Q}_4 $.
Thus,  $ \mathrm{S}  u $ satisfies (6).

Since $   Su $ satisfies (1)-(6),     $  \mathrm{S}  u \in N_0 $. 
Thus, $ N_0 $ is closed under  $  \mathrm{S} $.
Since $N_0 $ contains $0$ and is closed under $  \mathrm{S} $,    the class $N_0 $ is inductive.

We restrict $N_0$  to a subclass $N_1 $ where   addition is commutative, 
the left distributive law holds, 
and there are no zero divisors. 
Let  $  u  \in N_1 $ if and only if   $ u \in N_0 $ and 
\begin{itemize}

\item[(7)]       $ \forall x \in N_0      \;   [    \  x + u  = u + x    \     ]  $

\item[(8)]       $ \forall x  \in N_0   \;   \forall    y      \;   [    \   x (y+u ) = xy + xu    \     ]  $

\item[(9)]       $ \forall x   \in N_0   \;   [    \    x     u  =  0 \rightarrow   (    \    x = 0 \;   \vee   \;  u = 0    \    )     \     ]  $

\item[(10)]       $ \forall x  \in N_0     \;   [    \    \mathrm{S}  x   \times     u  = xu + u    \     ]       \;   $.

\end{itemize}

Verify that $  N_1 $ contains $0$. 
We need to show that $ 0 \in N_0 $ and that $0$ satisfies (7)-(10). 
Since $N_0 $ is inductive, $ 0 \in N_0 $. 
By $  \mathsf{Q}_4 $ and (1),  $0$ satisfies (7). 
By $ \mathsf{Q}_4 $ and  $ \mathsf{Q}_6 $, $0$ satisfies  (8).
It is obvious that  $0$  satisfies (9).
By $ \mathsf{Q}_6 $ and  $ \mathsf{Q}_4 $, $0$ satisfies  (10).
Since $ 0  $ is an element of $  N_0  $ and    satisfies (7)-(10),    $ 0 \in N_1 $.

We verify that $N_1 $ is closed under  $  \mathrm{S}   $.
Let $ u \in N_1 $. 
We need to show that $ \mathrm{S}    u \in N_0 $ and that $  \mathrm{S}   u $ satisfies (7)-(10). 
Since $ N_0 $ is inductive and $  u \in N_1 \subseteq N_0 $,  $  \mathrm{S}    u  \in N_0 $. 
We verify that $ \mathrm{S}   u $ satisfies (7). 
Let $ x \in N_0 $. 
Then 
\[
x + \mathrm{S}   u =  \mathrm{S}   ( x+u ) = \mathrm{S}    ( u + x )    = \mathrm{S}   u + x
\]
where the first equality holds by $ \mathsf{Q}_5 $, 
the second equality holds since  $u$ satisfies (7), 
and the last equality holds since $x $ satisfies (3). 
Thus,  $ \mathrm{S}   u $ satisfies (7).

We verify that $ \mathrm{S}  u $ satisfies (8). 
Let $ x  \in N_0 $.
We have 
\[
\begin{array}{r c l c c c l}
x ( y + \mathrm{S}  u )  
& = &     x   \times     \mathrm{S}  (y+ u  )
&  &   &   &        (      \mathsf{Q}_5      )  
\\
&= &   x (y+u )  +  x  
&  &   &   &        (      \mathsf{Q}_7      )  
\\
&= &   ( xy+  xu )  +  x  
&  &   &   &        (   u  \mbox{ satisfies }  (8)     \;          )  
\\
&= &   xy+  ( xu  +  x   ) 
&  &   &   &        (   x  \mbox{ satisfies }  (4)     \;          )  
\\
&= &   xy+  (   x \times   \mathrm{S}    u    )  
&  &   &   &        (      \mathsf{Q}_7      )       \          .
\end{array}
\]
Thus, $ \mathrm{S} u $ satisfies (8).

We verify that $ \mathrm{S}   u $ satisfies (9).
Let $ x \in N_0 $ and assume $ x \times  \mathrm{S}  u = 0 $. 
By $ \mathsf{Q}_7 $,  $  x u  + x = 0 $. 
Since $ x $ satisfies  (2),   $ x = 0 $. 
Thus,   $ \mathrm{S} u $ satisfies (9).

Finally, we  verify that $ \mathrm{S}   u $ satisfies (10).
Let $ x \in N_0 $. 
We have 
\[
\begin{array}{r c l c c c l}
\mathrm{S}  x \times  \mathrm{S}  u 
& = &     ( \mathrm{S}  x   \times     u  )  +    \mathrm{S}  x 
&  &   &   &        (      \mathsf{Q}_7      )  
\\
&= &   (  xu + u )  +   \mathrm{S}  x   
&  &   &   &        (   u  \mbox{ satisfies }  (10)     \;          )  
\\
&= &   xu + (u + \mathrm{S}  x )  
&  &   &   &          (  \mathrm{S}  x \in N_0   \mbox{ satisfies }  (4)     \;          )   
\\
&= &  xu +   \mathrm{S}  (u + x )     
&  &   &   &        (  \mathsf{Q}_5      )   
\\
&= &  xu +  \mathrm{S}    (  x    +   u )     
&  &   &   &        (   x \in N_0   \mbox{ and  }    u    \mbox{ satisfies }  (7)     \;          )   
\\
&= &   xu + (  x +  \mathrm{S}  u  )     
&  &   &   &       (  \mathsf{Q}_5      )    
\\
&= &   (xu +  x )  + \mathrm{S}  u     
&  &   &   &        (   \mathrm{S}  u \in N_0     \mbox{ satisfies }  (4)     \;          )   
\\
&= &   ( x \times  \mathrm{S}  u    )   +   \mathrm{S}  u     
&  &   &   &        (    \mathsf{Q}_7        )            \       .
\end{array}
\]
Thus, $ \mathrm{S}  u $ satisfies (10).

Since $ \mathrm{S}   u $ is an element of  $ N_0 $ and  satisfies (7)-(10),     $ \mathrm{S} u \in N_1 $. 
Thus, $ N_1 $ is closed under  $ \mathrm{S}  $. 
Since $N_1 $ contains $0$ and is closed under $\mathrm{S} $, the class $N_1 $ is inductive.

We restrict $N_1$ to a subclass $N_2 $ where  multiplication is associative and  commutative. 
Let  $  u  \in N_2 $ if and only if   $ u \in N_1 $ and 
\begin{itemize}

\item[(11)]       $ \forall x ,  y  \in N_1    \;   [    \   (xy )  u =  x (yu)      \     ]  $

\item[(12)]       $ \forall x  \in N_1      \;   [    \  x u = u x     \    ]     \;   $.

\end{itemize}

We verify that $0 \in N_2 $. 
We need to show that $ 0 \in N_1 $ and that $0$  satisfies (11)-(12). 
Since $ N_1 $ is inductive, $ 0 \in N_1 $. 
By $ \mathsf{Q}_6 $, $ 0 $ satisfies (11). 
By $  \mathsf{Q}_6 $  and (6),   $0$ satisfies (12). 
Thus, $ 0 \in N_2 $.

We verify that $N_2 $ is closed under  $  \mathrm{S}   $.
Let $ u \in N_2 $. 
We need to show that $  \mathrm{S}    u \in N_1 $ and that $  \mathrm{S}    u $ satisfies (11)-(12). 
Since $ N_{ 2} \subseteq N_1 $ and $N_1 $ is inductive, $  \mathrm{S}    u \in N_1 $. 
We verify that $  \mathrm{S}    u $ satisfies (11). 
Let $ x, y \in N_1 $. 
We have 
\[
\begin{array}{r c l c c c l}
(xy)  \times   \mathrm{S}    u 
& = &   (xy)u + xy  
&  &   &   &        (      \mathsf{Q}_7      )  
\\
&= &    x (yu)  + xy    
&  &   &   &        (   u  \mbox{ satisfies }  (11)     \;          )  
\\
&= &   x ( yu + y )  
&  &   &   &          (   x \in N_0  \mbox{ and } y \in N_1    \mbox{ satisfies }  (8)     \;          )   
\\
&= &  x (y  \times   \mathrm{S}    u ) 
&  &   &   &        (  \mathsf{Q}_7      )                \       .
\end{array}
\]
Thus, $  \mathrm{S}   u $ satisfies (11).

We verify that $  \mathrm{S}    u $ satisfies (12). 
Let $ x \in N_1 $. 
We have 
\[
\begin{array}{r c l c c c l}
x \times   \mathrm{S}    u 
& = &   xu + x 
&  &   &   &        (      \mathsf{Q}_7      )  
\\
&= &    ux + x    
&  &   &   &        (   u  \mbox{ satisfies }  (12)     \;          )  
\\
&= &    \mathrm{S}   u \times  x  
&  &   &   &          (   u \in N_0  \mbox{ and }   x \in N_1    \mbox{ satisfies }  (10)     \;          )             \       .
\end{array}
\]
Thus, $  \mathrm{S}   u $ satisfies (12).

Since $  \mathrm{S}    u $ is an element of  $ N_1 $ and  satisfies (11)-(12),     $  \mathrm{S}   u \in N_2 $. 
Thus, $ N_2 $ is closed under  $   \mathrm{S}    $.
Since $N_2 $ contains $0$ and is closed under $   \mathrm{S}   $, the class $N_2 $ is inductive.

We are almost done. 
All that remains is to restrict $N_2 $ to an inductive  class that is closed under addition and multiplication. 
We start by ensuring closure under addition. 
Let 
\[
N_3 = \lbrace u \in N_2  :   \   
\forall x \in N_2     \;   [    \   x+u  \in N_2     \         ]        
\     \rbrace  
\        .
\]
By $ \mathsf{Q}_4 $, $ 0 \in N_3 $. 
We show that  that $N_3 $ is closed under $ \mathrm{S} $. 
Let $ u \in N_3 $. 
Since $ N_2   $ is inductive and $ u \in N_3 \subseteq N_2 $,  $ \mathrm{S}   u \in N_2 $. 
By $ \mathsf{Q}_5 $,  given $ x \in N_2 $, 
we have  $ x + \mathrm{S}  u = \mathrm{S}   (x + u )  $. 
Since $ u \in N_3 $,  $ x+u \in N_2 $. 
Since $ N_2 $ is inductive, $  \mathrm{S}   (x + u )  \in N_2 $. 
Hence, $ \mathrm{S}  u \in N_3 $. 
Thus, $ N_3 $ is closed under  $ \mathrm{S}  $.

We verify that $ N_3 $ is closed under $ + $. 
Let $ u, v \in N_3 $. 
We need to show that $ u+v \in N_2 $ and  $ \forall x \in N_2  [    \  x + (u+v )   \in N_2  \    ]   $. 
Since $ u \in N_2 $ and $ v \in N_3 $, $ u+v \in N_2 $. 
Now, let $ x \in N_2 $. 
Since $ u \in N_3 $, $ x + u \in N_2 $. 
Since $ v \in N_3 $,  $  (x+u) + v \in N_2 $. 
Since $ v \in N_2 \subseteq N_0 $ satisfies (4),  $   (x+u) + v = x + (u+v) $. 
Hence, $ u+ v \in N_3 $
Thus, $ N_3 $ is closed under $+$.

Let 
\[
N =  \lbrace u    \in N_3 :    \   
\forall x  \in N_3  \;    [     \   xu  \in N_3     \    ]  
\     \rbrace 
\        .
\]
We show that   $N$ is an inductive class that is closed under   $ + $ and $ \times $.
We show that $ 0 \in N $. 
Since $ N_3$ is inductive, $ 0 \in N_3 $.
Let $ x \in N_3 $. 
By $ \mathsf{Q}_6 $,  $  x   0 = 0 \in N_3 $. 
Thus, $ 0 \in N $.

We show that $N$ is closed under   $  \mathrm{S}   $.
Let $ u \in N $. 
We need to show that $ \mathrm{S}   u \in N $. 
Since $ u \in N_3 $ and $N_3 $ is inductive,  $  \mathrm{S}   u \in N_3 $. 
Let $ x \in N_3 $. 
By $ \mathsf{Q}_7 $,   $ x \times  \mathrm{S}   u = xu + x $.
Since $u \in N $, $ xu \in N $. 
Since $ N_3 $ is closed  under addition, $ xu + x \in N_3 $. 
Hence, $ \mathrm{S}   u \in  N $. 
Thus,  $N$ is closed under   $ \mathrm{S}    $.

We show that $N$ is closed under  $  +  $ . 
Let $ u , v \in N \subseteq N_3 $. 
Since $ N_3 $ is closed under addition, $ u+v \in N_3 $. 
Let $ x \in N_3 $. 
Since $ u, v  \in N $, $ xu , xv  \in N_3 $. 
Since $ N_3 $ is closed under addition, $ xu + xv \in N_3 $. 
Since $ x \in N_3 \subseteq N_0 $ and $ v \in N_3 \subseteq N_1 $, 
$ xu + xv = x (u+v ) $. 
Hence, $ u+v \in N $. 
Thus, $ N$ is closed under $  +  $.

We show that $N$ is closed under  $  \times $. 
Let $ u , v \in N \subseteq N_3 $. 
Since $u \in  N_3 $ and $ v \in N $,  $ uv \in N_3 $. 
Let $ x \in N_3 $. 
Since $ u \in N $, $ xu \in N_3 $. 
Since $ v \in N $, $ (xu ) v \in N_3 $. 
Since  $x, u \in N_3 \subseteq N_1 $ and  $ v  \in N_3 \subseteq N_2 $ satisfies  (11), $ (xu) v = x (uv) $.
Hence, $ uv \in N $. 
Thus, $ N$ is closed under $  \times  $.

Since $N$ satisfies the domain conditions and all the axioms of $ \mathsf{IQ}^{ (2 ) } $ 
hold restricted to $N$ as  they are universal sentences, 
 $ \mathsf{IQ}^{ (2 ) } $  is interpretable in  $ \mathsf{IQ} $. 
 Since  $ \mathsf{IQ}^{ (2 ) } $ is an extension of  $ \mathsf{IQ} $, 
 it follows that  $ \mathsf{IQ} $  and  $ \mathsf{IQ}^{ (2 ) } $  are mutually interpretable.
\end{proof}

\section{Commutative Semirings II}
\label{CommutativeSemiringPropertiesIII}

It is clear that  $ \mathsf{Q}^{ (2) } $ is interpretable in  $ \mathsf{Q} $ since each axiom of  $ \mathsf{Q}^{ (2) } $
is provable in $ I \Delta_0 $,  which is $ \mathsf{Q} $ extended with an induction schema for $ \Sigma_0$-formulas, 
and $ I \Delta_0 $ is interpretable in $ \mathsf{Q} $ (see Section V.5c of Hajek  \& Pudlak   \cite{HajekandPudlak2017}).
Lemma V.5.11 of  \cite{HajekandPudlak2017} shows that we can interpret  any finite subtheory of  $ I \Delta_0 $ in  $ \mathsf{Q} $
by restricting the universe of $ \mathsf{Q} $ to a suitable subclass. 
It then follows that   our interpretation of $ \mathsf{TC}^{ \varepsilon } $ in  $ \mathsf{Q}^{ (2) } $ really extends to a recursion-free interpretation of   $ \mathsf{TC}^{ \varepsilon } $ in  $ \mathsf{Q} $.
For the benefit of the reader,
we show that we can also prove this by building on the proof of Theorem  \ref{CommutativeSemiring}.

Given a sentence  $ \phi $ and a class $M  $, 
let $ \phi^M $ denote the sentence we obtain by restricting quantification to $M$.

\begin{theorem}

There exists a class $M$ such that $ \mathsf{Q}  \vdash  \phi^M$ for each axiom $ \phi $ of    $  \mathsf{Q}^{ (2)  }  $.

\end{theorem}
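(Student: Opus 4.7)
The plan is to combine the layered cut construction from the proof of Theorem \ref{CommutativeSemiring} with the standard technique (Lemma V.5.11 of \cite{HajekandPudlak2017}) for interpreting finite fragments of $I\Delta_0$ inside $\mathsf{Q}$. Two features distinguish $\mathsf{Q}^{(2)}$ from $\mathsf{IQ}^{(2)}$: the predecessor axiom $\mathsf{Q}_3$, whose relativisation forces $M$ to be closed under immediate predecessors, and the trichotomy law; the construction must accommodate both.

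First I would verify that the chain $N_0 \supseteq N_1 \supseteq N_2 \supseteq N_3 \supseteq N$ used in Theorem \ref{CommutativeSemiring} goes through verbatim in $\mathsf{Q}$, since every axiom it invokes ($\mathsf{Q}_1, \mathsf{Q}_2, \mathsf{Q}_4, \ldots, \mathsf{Q}_7$) is already an axiom of $\mathsf{Q}$. This yields an inductive class $N$, closed under $+$ and $\times$, on which axioms (I)--(VI) of Figure \ref{StringsAsMatricesFigure} hold. Since these axioms, together with $\mathsf{Q}_1, \mathsf{Q}_2, \mathsf{Q}_4, \ldots, \mathsf{Q}_7$, are universal, they descend automatically to any subclass of $N$.

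Next I would restrict to a class $C \subseteq N$ that is both a cut for $\leq_{\mathsf{l}}$ and closed under immediate predecessors. The naive definition $\{ u \in N : \forall x \, [\, x \leq_{\mathsf{l}} u \to x \in N \,] \}$ is not obviously predecessor-closed, because showing $y \leq_{\mathsf{l}} \mathrm{S}y$ from $y + \mathrm{S}0 = \mathrm{S}y$ requires commutativity of $+$ applied to $y$, which is precisely what one is trying to establish; the fix, following \cite{HajekandPudlak2017}, is to bake predecessor-closure directly into the definition of $C$, requiring for each non-zero $u$ an explicit witness $y \in N$ with $u = \mathrm{S}y$ on top of the cut condition. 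Closure of $C$ under $+$ and $\times$ is then obtained by the same double-closure pattern that was used in the passage $N_3 \to N$ in Theorem \ref{CommutativeSemiring}, and after this step both $\mathsf{Q}_3^C$ and the full compatibility of $\leq_{\mathsf{l}}$ with addition and multiplication are available. I would then overlay trichotomy by setting $T = \{ u \in C : \forall x \in C \, [\, x <_{\mathsf{l}} u \vee x = u \vee u <_{\mathsf{l}} x \,] \}$ and proving $T$ inductive: $0 \in T$ follows from $\mathsf{Q}_4$ (take $z = x$ in the definition of $<_{\mathsf{l}}$ when $x \neq 0$), and the successor step uses $\mathsf{Q}_3^C$ to extract a predecessor $x'$ of an arbitrary non-zero $x \in C$ and then lifts each of the three cases for $(x', u)$ to a case for $(\mathrm{S}x', \mathrm{S}u)$ via $\mathsf{Q}_1$ and $\mathsf{Q}_5$. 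A final double-closure step yields the desired $M \subseteq T$ closed under $+$ and $\times$.

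The main obstacle is coordinating the three simultaneous closure requirements --- inductiveness, downward closure under $\leq_{\mathsf{l}}$, and closure under $+$ and $\times$ --- at each restriction stage, and especially at the transition $N \to C$, where the naive formulation fails because of the non-commutativity of $+$ in the ambient theory $\mathsf{Q}$. Staging the construction is exactly what resolves this: $N$ supplies (I)--(VI) and the algebraic closures, $C$ turns $N$ into a predecessor-closed cut so that $\mathsf{Q}_3^M$ and the interaction of $\leq_{\mathsf{l}}$ with arithmetic come for free, and $T$ then overlays trichotomy. Since each restriction is defined by a universal formula over the previous class, the universal axioms propagate without extra work, and only $\mathsf{Q}_3$ and trichotomy require dedicated layers.
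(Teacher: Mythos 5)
Your proposal and the paper's proof share the same overall strategy — start from the class $N$ of Theorem \ref{CommutativeSemiring} (which indeed carries over verbatim to $\mathsf{Q}$), then restrict further so that $\mathsf{Q}_3$ and trichotomy hold — but the route diverges in two places, and there is one genuine gap.

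The smaller divergence is the choice of ordering. You work with $\leq_{\mathsf{l}}$ (witness on the left, arbitrary), correctly observe that proving $y \leq_{\mathsf{l}} \mathrm{S}y$ from $y + \mathrm{S}0 = \mathrm{S}y$ would need commutativity, and propose to bake an explicit predecessor-witness into the definition of $C$ to compensate. The paper instead defines $u \leq_N v \equiv \exists r \in N\,[\,u+r = v\,]$ with the witness on the \emph{right} and required to lie in $N$. Then $y + \mathrm{S}0 = x$ immediately gives $y \leq_N x$ with no commutativity needed, and the predecessor property is \emph{derived} from downward closure under $\leq_N$ at the very end, rather than built into the class definition. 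This is cleaner and avoids having to carry an extra first-order condition through every layer. You also split predecessor-closure ($C$) and trichotomy ($T$) into separate layers, whereas the paper folds both the cut condition and local trichotomy (for pairs $\leq_N u$) into a single class $M_0$; both decompositions are workable, and yours arguably localizes trichotomy more cleanly since $T$ compares $u$ against all of $C$.

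The genuine gap is your claim that $\mathsf{Q}_3^M$ ``comes for free'' once $C$ is a predecessor-closed cut. It does not: the final class $M$ is obtained from $C$ (via $T$) by the double-closure pattern, i.e.\ $M$ consists of elements $u$ such that $\forall x\,[\,x + u,\, xu \text{ behave well}\,]$, and there is no a priori reason a predecessor of such a $u$ again satisfies these conditions. The paper devotes a substantial portion of the proof to exactly this point: it shows that $M_1$ and then $M$ are \emph{downward closed under $\leq_N$}, using the algebraic identities (associativity, commutativity, left distributivity) that hold on $N$ to transport $v \leq_N u$ to $x+v \leq_N x+u$ and $xv \leq_N xu$, and then invoking downward closure at the previous layer. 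Without such an argument, $\mathsf{Q}_3$ relativized to $M$ is not established. You do flag ``coordinating the three simultaneous closure requirements'' as the main obstacle, which is the right diagnosis, but the proposal as written resolves it by assertion rather than by the propagation argument that is actually needed. Filling that in — ideally by switching to the $\leq_N$ relation so that the bookkeeping is lighter — would complete the proof.
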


\begin{proof}

Let $ N $ be the class in the proof of Theorem  \ref{CommutativeSemiring}. 
Let 
\[
u     \leq_{ N  }    v   \equiv    \    \exists r  \in N   \;   [     \    u + r  = v     \    ] 
\          .
\]
We restrict  $N $ to an inductive subclass $M_0  $ that is downward closed under   $   \leq_{ N  }   $.
Let 
\[
M_0 =  \lbrace  u    \in N   :    \    \forall  v  \leq_{ N }   u   \;   [       \   v  \in N    \    ]     
\     \wedge     \  
\forall  x, y    \leq_{ N }    u   \;   [    \      x  \leq_{ N }   y   \;   \vee   \;   y   \leq_{ N }    x    \        ]
    \         \rbrace
\          .
\]
We show that $ 0 \in M_0 $. 
Assume $   v + r   = 0 $. 
If $ r = 0 $, then $ v = 0 $ by $ \mathsf{Q}_4 $.
If $ v  \neq 0 $, then by   $ \mathsf{Q}_3 $   there exists $  t $ such   $  r  =  \mathrm{S} t  $. 
Then, by $ \mathsf{Q}_5 $,  $  0 = v +r =  \mathrm{S}  ( v + t  )  $    which contradicts $  \mathsf{Q}_2 $. 
Thus, since $ 0 \in N $ and $ 0+ 0  = 0 $, we have $ 0 \in M_0 $.

We show that $ M_0 $ is closed under $  \mathrm{S} $. 
Let $ u \in M_0 $. 
We need to show that $  \mathrm{S} u \in M_0 $. 
Since $ u \in M_0 \subseteq N $ and $N$ is inductive,  $ \mathrm{S} u \in N $. 
We  show that  $ \forall  v  \leq_{ N }   \mathrm{S} u   \;   [       \   v  \in N    \    ]     $.
Assume  $ r \in N $ and $  v  + r  =    \mathrm{S}   u $. 
We need to show that  $ v \in N $. 
If $ v = 0 $, then $ v \in N $ since $N$ is an inductive class. 
Otherwise, by $ \mathsf{Q}_3 $, there exists $ w $ such that $ \mathrm{S} w = v $. 
By Clause (3) in   the proof of Theorem  \ref{CommutativeSemiring}  
\[
 \mathrm{S}   u   = v+r  =     \mathrm{S} w  + r   =     \mathrm{S}   ( w + r ) 
 \        .
 \]
By  $ \mathsf{Q}_1 $,  $  w  + r  = u $. 
Hence, $ w \leq_{ N }   u $. 
 Since $ u \in M_0 $,   we have    $ w \in N $. 
 Since $ N $ is an inductive class,   $   v =    \mathrm{S} w  \in N $. 
 Thus,   $ \forall  v  \leq_{ N }   \mathrm{S} u   \;   [       \   v  \in N    \    ]     $.

We show that    
$ \forall  x, y    \leq_{ N }  \mathrm{S}    u   \;   [    \      x  \leq_{ N }   y   \;   \vee   \;   y   \leq_{ N }    x    \        ]  $.
Assume $   x, y    \leq_{ N }  \mathrm{S}    u  $. 
By what we  have just shown,   $ x, y \in N $. 
If $ x =  \mathrm{S}    u  $ or $ y = \mathrm{S}    u  $, then $ x $ and $y$ are comparable with respect to  $ \leq_{ N } $
since   $   x, y    \leq_{ N }  \mathrm{S}    u  $. 
Otherwise, by $ \mathsf{Q}_4 $   
\[
  \mathrm{S}    u = x + r    \    \    \wedge    \      \     \mathrm{S}    u  = y + t 
\      \         \mbox{  where    }     r, t \in N \setminus \lbrace 0 \rbrace 
  \        .
  \]
Since $x, y, r, t \in N $, we have 
\[
  \mathrm{S}    u = x + r  = r + x     \    \    \wedge    \      \        \mathrm{S}    u  = y + t  = t + y 
  \]
by Clause (7) in the proof of   Theorem  \ref{CommutativeSemiring}. 
By $  \mathsf{Q}_3 $, there exist $ r_0, t_0 $ such that $ r =  \mathrm{S}  r_0 $ and $ t = \mathrm{S}  t_0 $.
Hence
\[
  \mathrm{S}    u =  \mathrm{S} r_0  +  x   =     \mathrm{S} ( r_0  +  x   )    
     \    \    \wedge    \      \   
        \mathrm{S}   u  = \mathrm{S}  t_0 + y      =    \mathrm{S}   ( t_0 + y  )  
  \]
by Clause (3)  in the proof of   Theorem  \ref{CommutativeSemiring}. 
By $  \mathsf{Q}_1 $,  
 $     u =  r_0  +  x      $   and   $   u  =    t_0 + y    $.
 Hence, $ r_0, t_0 \leq_{ N } u $ which implies $ r_0 , t_0 \in N $ since $ u \in M_0 $. 
 Then
 \[
     u =  r_0  +  x   =  x + r_0         \       \    \wedge    \           \          u  =    t_0 + y  = y + t_0    
     \]
     by   Clause (7) in the proof of   Theorem  \ref{CommutativeSemiring}. 
   Hence, $ x, y \leq_{ N } u $ which implies that $ x $ and $y$ are comparable with respect to $ \leq_{ N } $ since $ u \in M_0 $. 
Thus, we have    $ \forall  x, y    \leq_{ N }  \mathrm{S}    u   \;   [    \      x  \leq_{ N }   y   \;   \vee   \;   y   \leq_{ N }    x    \        ]  $.
It then follows that $  \mathrm{S} u \in M_0 $.

Since $ \leq_{ N }  $ is transitive, $ M_0 $ is downward closed under   $ \leq_{ N } $. 
Indeed, assume   $   w  \leq_{ N }  v $ and    $  v  \leq_{ N } u $.
Then,   there exist $ r, t \in N $ such that $ v =  w + r  $ and $ u =  v + t $. 
Hence, $ u = (w+r ) + t $. 
Since $ t \in N \subseteq N_0 $,  we have 
\[
   u = (w+r ) + t = w + (r + t ) 
   \]
by Clause (4)  in   the proof of Theorem  \ref{CommutativeSemiring}.
Since $ r, t \in N $ and $N$ is closed under addition,  $ r + t \in N $. 
Hence, $ w  \leq_{ N } u $. 
Thus,  $ \leq_{ N }  $ is transitive.

We restrict $M_0$ to a subclass $M_1$ that is closed under addition. 
Let 
\[
M_1  =  \lbrace    u  \in M_0  :    \       \forall x \in  M_0       \;   [    \   x+u  \in M_0     \         ]          \           \rbrace
\        .
\]
The class $M_1 $ is shown to be closed under $0, \mathrm{S} $ and $ +$ just as in the proof of Theorem  \ref{CommutativeSemiring}.
We show that $ M_1 $ is downward closed under $ \leq_{ N } $. 
Assume $ u \in M_1 $ and $ u = v + r $ where $ r \in N $.  We need to show that $ v \in M_1 $. 
So, let  $ x \in M_0 $. 
We need to show that $ x + v \in M_0 $. 
We have 
\[
 M_0   \ni    x + u =  x + (v+r )   =  (x+v ) + r 
 \]
  by Clause (4)  in the proof of Theorem  \ref{CommutativeSemiring}.
Then 
\[
x+v \leq_{ N }  x+u    \in M_0 
\      .
\]
Since $ M_0 $ is downward closed under $ \leq_{ N } $, we have $ x+v \in M_0 $. 
Hence, $ v \in M_1 $. 
Thus,    $ M_1 $ is downward closed under $ \leq_{ N } $.

Finally, we restrict $M_1$ to  a  domain $M$. 
Let 
\[
M  =  \lbrace    u  \in M_1  :    \       \forall x \in  M_1       \;   [    \   xu  \in M_1     \         ]          \           \rbrace
\        .
\]
The class $M $ is shown to be closed under $0, \mathrm{S} ,  +  ,  \times $ just as in the proof of Theorem  \ref{CommutativeSemiring}.
We show that $ M_1 $ is downward closed under $ \leq_{ N } $. 
Assume $ u \in M  $ and $ u = v + r $ where $ r \in N $.  We need to show that $ v \in M $. 
So, let  $ x \in M_1 $. 
We need to show that $ x v \in M_1 $. 
We have 
\[
 M_1 \ni    x u =  x (v+r )   = xv  + xr  
 \]
  by Clause (8)  in the proof of Theorem  \ref{CommutativeSemiring}.
Since $ v \leq_{ N } u $,  $ u \in M \subseteq M_1 $ and $ M_1 $ is downward closed under $   \leq_{ N }  $, 
we have $   v  \in M_1 $.
Then, by Clause  (7)     in the proof of Theorem  \ref{CommutativeSemiring}
\[
 u = v +r = r + v 
 \        .
 \]
Hence, $ r \leq_{ N } u $ which implies $ r \in M_1 $. 
Since $ x, r \in M_1 $ and $ M_1 $ is closed under $ \times $, we have $ xr \in M_1 \subseteq N $. 
Then,  $  x u =  xv  + xr    $ implies $ xv   \leq_{ N }  xu  $. 
Since $ xu \in M_1 $ and $M_1 $ is downward closed under $ \leq_{ N } $,  we have $ xv \in M_1 $. 
Hence, $ v \in M $. 
Thus, $ M$ is    downward closed under $ \leq_{ N } $.

Axioms (I)-(VIII) in Figure    \ref{StringsAsMatricesFigure}  and the axioms of $ \mathsf{Q} $ that are universal sentences 
hold on $M$ when we restrict quantification to $M$ 
since they hold on  $N$  when we restrict quantification to  $N$. 
We show that  $  \mathsf{Q}_3  \equiv   \    \forall x  \;   [   \  x = 0  \;   \vee  \;   \exists  y   \;   [     \    x   =  \mathrm{S} y   \   ]   $
holds on $M$. 
Assume $ x \in M \setminus \lbrace 0 \rbrace  $. 
By   $  \mathsf{Q}_3 $, there exists $ y $ such that $ x =  \mathrm{S} y $. 
We need to show that $ y \in M $. 
By $ \mathsf{Q}_4 $ and $  \mathsf{Q}_5 $,  we have 
\[
  x =  \mathrm{S} y  =    \mathrm{S} y  + 0   =  \mathrm{S} ( y  + 0 )   =  y +  \mathrm{S} 0 
  \      .
  \]
Since $ M$ is an inductive class, $  \mathrm{S} 0   \in M  \subseteq    N $. 
Hence,  $  y  \leq_{   N  }   x  $. 
Since $M$ is downward closed under $  \leq_{   N  }    $, we have $ y  \in M $. 
Thus,  $  \mathsf{Q}_3    $ holds restricted to $M$.

Finally, we show the trichotomy law
$ \forall x y  \;   [    \     x  <_{ \mathsf{l} }  y   \;   \vee   \;      x = y       \;   \vee   \;   y  <_{ \mathsf{l} }  x    \      ]    $
holds restricted to  $M$.
Recall that  
$   x <_{ \mathsf{l} }   y  \equiv   \   \exists  r  \;   [     \   r \neq 0  \;  \wedge   \;    r + x = y      \       ]    $.
Let $ x, y \in M $. 
Since $M$ is closed under addition and addition on $M$ is commutative
\[
 y +x =  x+ y \in M 
 \         .
 \] 
Then, $ x, y   \leq_{ N }    x+ y    $. 
Since $M \subseteq M_0 $,  we have 
\[
  x  \leq_{ N }  y    \;   \vee   \;    y    \leq_{ N }    x  
  \       .
  \]
Assume $ y = x + r  $ where $ r \in N $. 
By Clause (7) in the proof of Theorem    \ref{CommutativeSemiring},
$ y = x+r = r +x $. Hence, $ r \leq_{ N } y $ which implies $ r \in M $. 
Similarly, if $ x = y +t $ where $ t \in N$, then $ t \in M $. 
Hence, since  $  x  \leq_{ N }  y    \;   \vee   \;    y    \leq_{ N }    x  $  holds  
\[
\exists  r, t \in M    \;      [      \       y = x + r        \   \vee   \     x = y + t       \        ]  
\        .
\]
Thus,  the trichotomy law   holds restricted to  $M$. 
\end{proof}

\end{document}